\newtheorem{theorem}{Theorem}[section]
\newtheorem*{theorem*}{Theorem}
\newtheorem{lemma}[theorem]{Lemma}
\newtheorem{proposition}[theorem]{Proposition}
\newtheorem{corollary}[theorem]{Corollary}
\newtheorem{conjecture}[theorem]{Conjecture}
\newtheorem*{conjecture*}{Conjecture}
\newtheorem*{question*}{Question}
\newtheorem{example}[theorem]{Example}
\newtheorem{remark}[theorem]{Remark}
\newcommand{\ie}{{\em i.e.~}\ }
\newcommand{\eg}{{\em e.g.~}\ }
\newcommand{\opname}[1]{\operatorname{\mathsf{#1}}}
\renewcommand{\mod}{\opname{mod}\nolimits}
\newcommand{\grmod}{\opname{grmod}\nolimits}
\newcommand{\proj}{\opname{proj}\nolimits}
\newcommand{\Inj}{\opname{Inj}\nolimits}
\newcommand{\Mod}{\opname{Mod}\nolimits}
\newcommand{\Grmod}{\opname{Grmod}\nolimits}
\newcommand{\grproj}{\opname{grproj}}
\newcommand{\Grproj}{\opname{Grproj}}
\newcommand{\rep}{\opname{rep}\nolimits}
\newcommand{\Rep}{\opname{Rep}\nolimits}
\newcommand{\im}{\opname{im}\nolimits}
\newcommand{\thick}{\opname{thick}\nolimits}
\newcommand{\Tria}{\opname{Tria}\nolimits}
\newcommand{\per}{\opname{per}\nolimits}
\newcommand{\Hom}{\opname{Hom}}
\newcommand{\End}{\opname{End}}
\newcommand{\cHom}{\mathcal{H}\it{om}}
\newcommand{\cEnd}{\mathcal{E}\mathit{nd}}
\newcommand{\rad}{\opname{rad}}
\newcommand{\ten}{\otimes}
\newcommand{\lten}{\overset{\boldmath{L}}{\ten}}
\newcommand{\Cone}{\opname{Cone}}
\newcommand{\Tot}{\opname{Tot}}
\newcommand{\ca}{{\mathcal A}}
\newcommand{\cb}{{\mathcal B}}
\newcommand{\cc}{{\mathcal C}}
\newcommand{\cd}{{\mathcal D}}
\newcommand{\ch}{{\mathcal H}}
\newcommand{\ci}{{\mathcal I}}
\newcommand{\cp}{{\mathcal P}}
\newcommand{\ct}{{\mathcal T}}
\newcommand{\cx}{{\mathcal X}}
\newcommand{\cy}{{\mathcal Y}}
\newcommand{\cz}{{\mathcal Z}}
\newcommand{\dslash}{/\hspace{-5pt}/}
\begin{document}

\title[Gentle algebras]{Derived categories of graded gentle one-cycle algebras}

\author{Martin Kalck}
\thanks{M.K. was supported by DFG grant Bu--1866/2--1.}
\address{Martin Kalck, The Maxwell Institute, School of Mathematics, James Clerk Maxwell Building, The King's Buildings, Mayfield Road, Edinburgh, EH9 3JZ, UK.}
\email{m.kalck@ed.ac.uk}
\author{Dong Yang}
\address{Dong Yang, Department of Mathematics, Nanjing University, Nanjing 210093, P. R. China}
\email{yangdong@nju.edu.cn}
\thanks{D.Y. was supported by Max-Planck-Institute for Mathematics in Bonn, the DFG program SPP 1388 (YA297/1-1 and KO1281/9-1), the National Science Foundation in China No. 11401297 and a project funded by the Priority Academic Program Development of Jiangsu Higher Education Institutions.}

\date{\today}
\maketitle

\bigskip

\begin{abstract}
Let $A$ be a graded algebra. It is shown that the derived category of dg modules over $A$ (viewed as a dg algebra with trivial differential) is a triangulated hull of a certain orbit category of the derived category of graded $A$-modules. This is applied to study derived categories of graded gentle one-cycle algebras.\\
{\bf Keywords}: derived category, derived equivalence, graded algebra, orbit category, gentle one-cycle algebra.\\
{\bf
2010 MSC}: 16E35, 16E45, 18E30.
\end{abstract}


\section{Introduction}

Discrete triangulated categories are, roughly speaking, those Krull--Schmidt triangulated categories
which do not admit `continuous' families of isomorphism classes of
indecomposable objects (see \cite{Vossieck01,BroomheadPauksztelloPloog15} for various notions of discreteness). A special class of such categories called locally finite triangulated categories (\eg those with finitely many
isomorphism classes of indecomposable objects) were
intensively studied, in particular, their Auslander--Reiten quivers are classified, see~\cite{XiaoZhu05,Amiot07a}.
For a finite-dimensional algebra (over an algebraically closed field), Vossieck's theorem~\cite{Vossieck01} states that its derived
category is discrete if and only if it is derived equivalent to a hereditary algebra of finite representation type (namely, the path algebra
of a Dynkin quiver) or it is a gentle one-cycle algebra which does not satisfy the \emph{clock condition} (see Section~\ref{s:gentle-one-cycle-alg}). The Auslander--Reiten quiver was determined in the former case by Happel in~\cite{Happel87} and in the latter case by Bobi\'nski--Geiss--Skowro\'nski in~\cite{BobinskiGeissSkowronski04}. See \cite{BobinskiKrause15,ArnesenLakingPauksztelloPrest16,BroomheadPauksztelloPloog13,BroomheadPauksztelloPloog16,Qin16} for further study on discrete derived categories.

Recently, certain discrete triangulated categories of geometrical origin have been studied, \eg the triangulated category generated by a $d$-spherical object ~\cite{KellerYangZhou09} and the relative singularity category of the Auslander resolution of the nodal curve singularity \cite{BurbanKalck12}. They turn out to be derived categories of dg modules over certain graded gentle one-cycle algebras, more precisely, $k[x]/x^2$ with $\deg(x)=d$ and the path algebra of the graded quiver $\xymatrix{1\ar@<.4ex>[r]^{\alpha}&2\ar@<.4ex>[l]^{\beta}}$
with both arrows in degree $-1$, respectively.
Moreover, derived categories of graded hereditary algebras of type $\tilde{A_n}$ are triangle equivalent to partially wrapped Fukaya categories of graded annuli, see \cite[Sections 1.2 and 6.3]{HaidenKatzarkovKontsevich14} and \cite[Section 2.1]{LekiliPolishchuk17}. Our Theorem 1.2 below gives a representation theoretic description of these triangulated Fukaya categories and gives a partial answer to the following question.

\begin{question*} When is the derived category of dg modules over a graded gentle one-cycle algebra (viewed as a dg algebra with trivial differential)
discrete and what does the
Gabriel/Auslander--Reiten quiver look like?
\end{question*}

In this paper we are not able to define derived discreteness for graded algebras, but with Theorem~\ref{t:main-thm-1.2} we believe that the graded algebras $\Gamma(p,q,r)$ ($r\in\mathbb{Z}\backslash\{0\}$) and $\Gamma'(q,r)$ ($r\in\mathbb{Z}$) in Theorem~\ref{t:main-thm-1.1} are derived discrete for a reasonable definition of derived discreteness.

\begin{theorem} \label{t:main-thm-1.1}
Let $A$ be a graded gentle one-cycle algebra.
 \begin{itemize}
  \item[(a)] If $A$ has finite global dimension, then there is a triple $(p,q,r)$ of integers
with $p,q\in\mathbb{N}$\hspace{1pt}\footnote{$\mathbb{N}$ is the set of positive integers.} and $r\in\mathbb{Z}$ such that $A$ is derived equivalent to the path algebra
$\Gamma(p,q,r)$ of the graded quiver
\[\xymatrix@R=0.7pc@C=0.7pc{&p+q\ar[dl]_{\alpha_{p+q}}&\cdot\ar[l]\ar@{.}[r]&\cdot&p+2\ar[l]&\\
1&&&&&p+1\ar[dl]^{\alpha_p}\ar[ul]_{\alpha_{p+1}},\\
&2\ar[ul]^{\alpha_1}&\cdot\ar[l]^{\alpha_2}\ar@{.}[r]&\cdot&p\ar[l]&}\]
where $\deg(\alpha_i)=\delta_{i,p+q}r$.

\item[(b)] If $A$ has infinite global dimension, then there are integers $q\in\mathbb{N}$ and $r\in\mathbb{Z}$ such that $A$ is derived equivalent to the quotient algebra $\Gamma'(q,r)$ of
the  path algebra of the graded quiver
\[\xymatrix@R=0.7pc@C=0.7pc{&q\ar[r]&\cdot\ar@{.}[r]&\cdot\ar[r]&i+1\ar[dr]^{\alpha_i}&\\
1\ar[ur]^{\alpha_{q}}&&&&&i\ar[dl]^{\alpha_{i-1}},\\
&2\ar[ul]^{\alpha_1}&\cdot\ar[l]^{\alpha_2}\ar@{.}[r]&\cdot&i-1\ar[l]&}\]
modulo all paths of length two, where $\deg(\alpha_i)=\delta_{i,q}(q-r)$.
 \end{itemize}
\end{theorem}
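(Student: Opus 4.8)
The plan is to classify graded gentle one-cycle algebras up to derived equivalence by first reducing to the case where the underlying quiver is a single (unoriented) cycle, then normalizing the orientation, the relations, and finally the grading. Throughout I would work with the explicit combinatorial presentation of $A$ as a graded gentle quiver $(Q,I)$ whose underlying graph contains exactly one cycle, with trees possibly attached, together with a degree function on the arrows. The governing principle is that every reduction should be realized by a graded tilting complex: by Theorem~\ref{t:main-thm-1.2} a graded derived equivalence that is compatible with the degree-shift functor transports to an equivalence of the derived categories of dg modules, so it suffices to produce equivalences at the level of $D^b(\grmod A)$ and then invoke the orbit-category description.

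First I would eliminate the trees. A linear branch hanging off the cycle can be contracted or folded into the cycle by a sequence of reflection functors (APR-type tilts) at the sources and sinks of the branch and, where relations are present, by suitable two-term tilting complexes; each such tilt is a graded tilting complex whose effect on the grading I would record. This is essentially the graded refinement of the reduction underlying the ungraded classification, so I would lean on the ungraded results of Bobi\'nski--Geiss--Skowro\'nski~\cite{BobinskiGeissSkowronski04} (and the Avella-Alaminos--Geiss derived invariant) to know the ungraded normal form, and then check that each move lifts to the graded setting. After this step $A$ is derived equivalent to a graded gentle algebra supported on a single cycle.

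Next I would dichotomize according to global dimension, the key observation being that a one-cycle gentle algebra has finite global dimension precisely when the relations do not form a complete cyclic pattern around the cycle (equivalently, at least one composable pair of arrows traversing the cycle is not a relation), and infinite global dimension precisely when they close up cyclically. In the finite global dimension case I would use reflection functors to move all arrows into the canonical orientation with a single source and a single sink, producing the hereditary $\tilde A$-algebra $\Gamma(p,q,r)$ with arms of lengths $p$ and $q$; in the remaining case I would reduce the relations to radical square zero and obtain $\Gamma'(q,r)$. The integers $p$ and $q$ record the numbers of clockwise and anticlockwise arrows of the cycle, up to interchange, matching the ungraded derived invariant.

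Finally I would normalize the grading. Using graded automorphisms and the freedom to shift the degrees of individual arrows by coboundaries coming from the idempotent-generated gradings, I would show that the only derived invariant of the grading is the total degree accumulated in traversing the cycle once, and that this degree can be concentrated on the single arrow $\alpha_{p+q}$ (respectively $\alpha_q$), giving the value $r$ (respectively $q-r$). The main obstacle I anticipate is exactly this graded bookkeeping: one must verify that every combinatorial reduction lifts to a genuine graded tilting complex, compute the induced grading on the endomorphism algebra after each tilt, and ensure compatibility with the degree-shift functor so that Theorem~\ref{t:main-thm-1.2} applies. Tracking how the single nonzero degree propagates through the chain of reflection functors, and confirming that no further relation or grading data survives beyond the one parameter $r$, is the delicate heart of the argument.
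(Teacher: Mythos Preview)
Your proposal has a genuine gap in the non-clock-condition case. You plan to reach the hereditary target $\Gamma(p,q,r)$ by a chain of graded tilting complexes, arguing that each graded derived equivalence descends to an equivalence of dg derived categories. But by Theorem~\ref{thm:der-equiv-on-three-levels} (this is the result you need, not Theorem~\ref{t:main-thm-1.2}, which is about Auslander--Reiten quivers), a graded derived equivalence also forces an \emph{ordinary} derived equivalence of the underlying ungraded algebras. Now if $FA$ is a gentle one-cycle algebra not satisfying the clock condition, say $FA\simeq\Lambda(r,n,m)$ with $n>r\geq 1$, then $\cd^b(\mod FA)$ is discrete by Vossieck's theorem, whereas $\cd^b(\mod F\Gamma(p,q,r'))$ is the derived category of a tame hereditary $\tilde A$-algebra and is not discrete. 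Hence no chain of graded tilting complexes can connect $A$ to any $\Gamma(p,q,r')$: the very descent property you invoke obstructs the final step. Reflection functors and two-term tilts move you around inside the ungraded derived-equivalence class of $\Lambda(r,n,m)$ and let you normalise the grading, but they can never remove the relations.

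What the paper does instead is a two-stage argument. Your reduction is essentially its first stage: string modules are gradable, so the ungraded tilting-cotilting chains of Assem--Skowro\'nski and Bobi\'nski--Geiss--Skowro\'nski lift verbatim to graded derived equivalences, landing $A$ on some $\Lambda(r,n,m,d)$ (or already on a graded $\Gamma(p,q,r)$ in the clock case). The second stage, from $\Lambda(r,n,m,d)$ to $\Gamma(n-r,m+r,r-d)$, is \emph{not} a graded derived equivalence but only a dg one. The paper produces it by equipping $\Gamma$ with an auxiliary bigrading, exhibiting an explicit tilting object in $\cd(F_1\Gamma)$ whose endomorphism algebra is the ungraded $\Lambda(r,n,m)$, and then invoking a formality-via-extra-grading argument (Lemma~\ref{l:transfering-formalness-bigraded} and Proposition~\ref{p:derived-equiv-graded-alg-4}) to transport the equivalence to the other grading. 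This bigraded formality trick is the missing idea; without it, or some substitute that works directly in $\cd(A)$ rather than in $\cd(\Grmod A)$, the bridge from the $\Lambda$-family to the hereditary normal form cannot be built.
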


For a dg algebra $A$, let $\cd_{fd}(A)$ denote the full subcategory of the derived category of $A$ consisting of those dg $A$-modules with finite-dimensional total cohomology.

\begin{theorem} \label{t:main-thm-1.2}
Let $p,q\in\mathbb{N}$ and $r\in\mathbb{Z}\backslash\{0\}$.
 \begin{itemize}

\item[(a)]  The Auslander--Reiten quiver of $\cd_{fd}(\Gamma(p,q,r))$ has $3|r|$ connected components:
$\cx^1_i$ of type $\mathbb{Z}A_\infty$, $\cx^2_i$ of type $\mathbb{Z}A_\infty$ and $\cp_i$ of type $\mathbb{Z}A_\infty^\infty$, where
$0\leq i\leq |r|-1$. The suspension functor defines cyclic permutations of order $|r|$ on the sets
$\{\cx^1_i\}$, $\{\cx^2_i\}$ and $\{\cp_i\}$, respectively. For $X\in\cx^1_i$ we have $\tau^pX=\Sigma^{r}X$and for $X\in\cx^2_i$ we have $\tau^qX=\Sigma^{-r}X$. In other words, objects in $\cx^1_i$ and $\cx^2_i$ are fractionally Calabi--Yau of dimension $\frac{p+r}{p}$ and $\frac{q-r}{q}$, respectively.
\item[(b)]  The Gabriel quiver of $\cd_{fd}(\Gamma'(q,r))$ has $2|r|$ connected components: $\cx_i$ of type $\mathbb{Z}A_\infty$
 and $\cy_i$ of type linear $A_\infty^\infty$, where $0\leq i\leq |r|-1$. The suspension functor defines cyclic permutations of order $|r|$ on the sets
$\{\cx_i\}$ and $\{\cy_i\}$, respectively. The triangulated subcategory generated by objects in $\cx_i$ has Auslander--Reiten triangles. For $X\in\cx_i$  we have $\tau^{q}X=\Sigma^{-q+r}X$. In other words, objects in $\cx_i$ are fractionally Calabi--Yau of dimension $\frac{r}{q}$.
\item[(c)] The Gabriel quiver of $\cd_{fd}(\Gamma'(q,0))$ consists of $\mathbb{Z}$ tubes $\cx_i$ of rank $q$ and $\mathbb{Z}$ cyclic quivers $\cp_i$ with $q$ vertices. The triangulated subcategory generated by objects in $\cx_i$ has Auslander--Reiten triangles. For $X\in\cx_i$, we have $\tau^qX=X$. In other words, objects in $\cx_i$ are fractionally Calabi--Yau of dimension $\frac{q}{q}$.
 \end{itemize}
\end{theorem}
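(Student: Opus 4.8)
The plan is to deduce all three parts from the orbit-category description of $\cd(A)$ announced in the abstract, combined with covering theory. Viewing $A=\Gamma(p,q,r)$ (resp. $\Gamma'(q,r)$) as a dg algebra with zero differential, that result identifies $\cd(A)$ with the triangulated hull of the orbit category $\cd^b(\Grmod A)/\Phi$, where $\Phi=\langle 1\rangle\circ\Sigma^{-1}$ (up to the sign convention for the grading): since the differential is zero, a graded module with its internal degree read as cohomological degree satisfies $M\langle 1\rangle\cong\Sigma M$, and $\Phi$ is exactly the autoequivalence one quotients out to impose this. Restricting to finite-dimensional cohomology, it then suffices to (i) describe $\cd^b_{fd}(\Grmod A)$ together with its two commuting autoequivalences $\langle 1\rangle$ and $\Sigma$, and (ii) follow the Auslander--Reiten components and the translation $\tau$ through the passage to the orbit category and its triangulated hull.

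First I would analyse $\cd^b_{fd}(\Grmod A)$ by covering theory. Because the grading of $A$ is carried by the single arrow of degree $r$, graded $A$-modules are representations of the $\mathbb{Z}$-cover $\hat{Q}$, whose vertices are pairs $(v,m)$ with $m\in\mathbb{Z}$ the internal degree, whose degree-$0$ arrows preserve $m$, and whose unique degree-$r$ arrow raises $m$ by $r$. For $r\neq 0$ this cover decomposes into $|r|$ isomorphic connected components, each the quiver of an infinite linear gentle algebra $\Lambda$ obtained by unrolling the cycle; the grading shift $\langle 1\rangle$ permutes the components cyclically with order $|r|$, while $\Sigma$ acts inside each. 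Hence $\cd^b_{fd}(\Grmod A)\simeq\bigoplus_{i=0}^{|r|-1}\cd^b_{fd}(\mod\Lambda)$. For $r=0$ (part (c)) the cover instead consists of $\mathbb{Z}$ disjoint copies of the cycle, on which $\langle 1\rangle$ acts freely; this is the origin of the $\mathbb{Z}$-indexed families of tubes and cyclic quivers.

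Next I would compute the Auslander--Reiten quiver of $\cd^b_{fd}(\mod\Lambda)$ by the usual string-and-band calculus for gentle algebras: the string objects organise into components of type $\mathbb{Z}A_\infty$ coming from the two arms (of lengths $p$ and $q$) and a component of type $\mathbb{Z}A_\infty^\infty$ coming from the band/cyclic direction, with explicit formulas for $\tau$ and $\Sigma$ on strings. Imposing $\langle 1\rangle\cong\Sigma$ in the orbit category then folds the $\mathbb{Z}$-many cover-components within each family into exactly $|r|$ orbits, producing the $3|r|$ components of part (a) and the $2|r|$ of part (b); the asserted cyclic permutation of order $|r|$ by $\Sigma$ is precisely the residual action of $\langle 1\rangle$. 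The fractional Calabi--Yau relations follow from the string formulas together with $\langle 1\rangle\cong\Sigma$: for $X\in\cx^1_i$ the operator $\tau^p$ acts as a shift accumulating internal degree exactly $r$ -- one full traversal of the cycle -- so $\tau^pX\cong\langle r\rangle X\cong\Sigma^rX$, and the opposite traversal for $\cx^2_i$ gives $\tau^qX\cong\Sigma^{-r}X$. In the infinite-global-dimension cases $\Gamma'(q,r)$ one separates the components built from objects of finite projective dimension, which keep their Auslander--Reiten triangles and stay of type $\mathbb{Z}A_\infty$, from the remaining ones, which degenerate to linear $A_\infty^\infty$ (part (b)) or cyclic (part (c)) shape and for which only the Gabriel quiver survives.

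The main obstacle is twofold. First, the orbit category $\cd^b_{fd}(\Grmod A)/\Phi$ need not itself be triangulated, so I must verify that passing to its triangulated hull neither creates new indecomposables nor merges components beyond the controlled folding above; concretely, that the canonical functor into $\cd_{fd}(A)$ is fully faithful on the string and band objects and that every indecomposable of $\cd_{fd}(A)$ already lies in its image. Second, for $\Gamma'(q,r)$ the failure of Serre duality means that Auslander--Reiten triangles are not available throughout $\cd_{fd}(\Gamma'(q,r))$, so isolating the triangulated subcategory where they do exist, and determining the Gabriel quiver elsewhere, requires a direct analysis of morphisms between string objects and the projective-type objects rather than a formal Serre-duality argument.
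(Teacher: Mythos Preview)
Your plan for part (a) is essentially the paper's own argument: $\Gamma(p,q,r)$ is graded \emph{hereditary}, so Theorem~\ref{t:main-thm-2}(a) (more precisely Theorem~\ref{thm:tot-is-orbital}(e)) gives that $\overline{\Tot}$ is an \emph{equivalence}, not merely a dense embedding into a hull. Hence the first obstacle you flag simply does not arise for (a); you should invoke hereditarity here rather than leave it as an open issue. Covering theory then identifies $\Grmod\Gamma(p,q,r)$ with $|r|$ copies of $\Rep Q^z$, and the component count and $\tau$-formulas drop out as you describe.

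For parts (b) and (c) there is a genuine gap. The algebra $\Gamma'(q,r)$ is \emph{not} graded hereditary (all length-two paths are killed), so the orbit category $\cd^b(\grmod_0\Gamma'(q,r))/\Sigma\circ\langle-1\rangle$ is only known to embed in $\cd_{fd}(\Gamma'(q,r))$; you have no mechanism to show this embedding is dense, and the paper's own example after Theorem~\ref{thm:tot-is-orbital} shows the gap between an orbit category and its hull can be enormous. There is also a numerical warning sign: in $\Gamma'(q,r)$ the distinguished arrow has degree $q-r$, so your $\mathbb{Z}$-cover breaks into $|q-r|$ pieces, not $|r|$; you will not get the asserted $2|r|$ components directly.

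The paper sidesteps all of this for (b) and (c) by a different route: Koszul duality. One shows that $\Gamma'(q,r)$ is (graded equivalent to) the Koszul dual $\Gamma^*$ of $\Gamma=\Gamma(0,q,r)$, which \emph{is} graded hereditary. Keller's recollement result then yields a fully faithful embedding $\cd(\Gamma^*)\hookrightarrow\cd(\Gamma)$ identifying $\per(\Gamma^*)$ with $\cd_{fd}(\Gamma)$ and $\cd_{fd}(\Gamma^*)$ with $\thick_{\cd(\Gamma)}(\Hom_k(\Gamma,k))$. Since $\cd(\Gamma(0,q,r))$ is already computed by the orbit-category method of part~(a) (case $p=0$), the structure of $\cd_{fd}(\Gamma'(q,r))$---including the $2|r|$ components, the $\tau$-formula, and which subcategory retains Auslander--Reiten triangles---is read off from there. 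This Koszul-dual transfer is the missing idea in your proposal.
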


From Theorem~\ref{t:main-thm-1.2} (a) we deduce by a straightforward calculation that $\Gamma(p,q,r)$ and $\Gamma(p',q',r')$ are derived equivalent if and only if $(p,q,r)=(p',q',r')$ or $(p,q,r)=(q',p',-r')$. If $(p,q,r)=(q',p',-r')$, the two graded algebras are actually graded equivalent. Similarly, $\Gamma'(q,r)$ and $\Gamma'(q',r')$ are derived equivalent if and only if $(q,r)=(q',r')$.

\medskip

Part of the strategy to prove Theorem~\ref{t:main-thm-1.1} is to find
a graded version of the old results and push them forward to the dg
version. To this end, we investigate the relationship between
various derived categories associated to \emph{any} graded algebra
$A$. These results are interesting in their own right. More
precisely, we study the following diagrams of triangle functors,
which forms the main ingredients of the homological Koszul
duality~\cite{RicheSoergelWilliamson14}:
\[\xymatrix@C=0.5pc{&\cd(\Grmod A)\ar[ld]_{\Tot}\ar[rd]^{F}& &&\ch^b(\grproj A)\ar[ld]_{\Tot}\ar[rd]^{F}&\\
\cd(A) & & \cd(\Mod FA) & \per(A) & & \ch^b(\proj FA)}
\]
where $F$ is the functor of
forgetting the grading, $\cd(A)$ is the derived category of dg
$A$-modules, $\per(A)$ is the triangulated subcategory of $\cd(A)$
generated by the free dg module of rank $1$. A complex of graded $A$-modules can be viewed as a bicomplex. The functor $\Tot$  takes such a complex to its total complex. The right hand sides, which deals
with the forgetful functor, has been intensively studied in the
literature, see for example~\cite{Marcus03}. So our attention is focused on the left hand side.

\begin{theorem}\label{t:main-thm-2}
\begin{itemize}
\item[(a)] Let $A$ be a graded algebra.
The functor $\Tot:\ch^b(\grproj A)\rightarrow\per(A)$
induces a fully faithful functor $\overline{\Tot}:\ch^b(\grproj A)/\Sigma\circ\langle-1\rangle\rightarrow \per(A)$, where $\ch^b(\grproj A)/\Sigma\circ\langle-1\rangle$ is the orbit category of $\ch^b(\grproj A)$ with respect to the auto-equivalence $\Sigma\circ\langle-1\rangle$,
so that $\per(A)$ becomes a triangulated hull of $\ch^b(\grproj A)/\Sigma\circ\langle-1\rangle$.
When $A$ is graded hereditary,
$\overline{\Tot}$ is an equivalence.
\item[(b)] Let $A$ and $B$ be two graded algebras, and $T$ be a complex of graded $B$-$A$-bimodules. The following assertions are equivalent
\begin{itemize}
 \item[(i)] $?\lten_B T:\cd(\Grmod B)\rightarrow\cd(\Grmod A)$ is a triangle equivalence,
 \item[(ii)] $?\lten_B \Tot(T):\cd(B)\rightarrow\cd(A)$ is a triangle equivalence,
 \item[(iii)] $?\lten_{FB} F(T):\cd(\Mod FB)\rightarrow\cd(\Mod FA)$ is a triangle equivalence.
\end{itemize}

\end{itemize}
\end{theorem}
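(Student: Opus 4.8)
The plan is to treat both parts through a single device: view $A$ as a dg algebra with zero differential, so that its internal grading becomes the cohomological grading of the dg algebra and the total complex functor converts an internal grading shift into an ordinary suspension. Concretely, I would first record the natural isomorphisms $\Tot\circ\Sigma\cong\Sigma\circ\Tot$ and $\Tot\circ\langle1\rangle\cong\Sigma\circ\Tot$, both immediate from the definition of the total complex on the free module (one checks $\Tot(\Sigma^nA\langle i\rangle)\cong\Sigma^{n+i}A$). Together they give a canonical isomorphism $\Tot\circ(\Sigma\circ\langle-1\rangle)\cong\Tot$, so that $\Tot$ sends each $\Sigma\circ\langle-1\rangle$-orbit to a single object and hence factors through a functor $\overline{\Tot}\colon\ch^b(\grproj A)/\Sigma\circ\langle-1\rangle\to\per(A)$.

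For full faithfulness of $\overline{\Tot}$ I would argue by d\'evissage. Both $\ch^b(\grproj A)$ and $\per(A)$ are generated, as triangulated categories, by $A$ together with its twists and shifts, and both sides of the desired isomorphism
\[\bigoplus_{n\in\Z}\Hom_{\ch^b(\grproj A)}(X,(\Sigma\circ\langle-1\rangle)^nY)\;\iso\;\Hom_{\per(A)}(\Tot X,\Tot Y)\]
are cohomological in each variable, so it suffices to check it for $X=A$ and $Y=A\langle i\rangle$. There the left side collapses to its $n=0$ summand $\Hom_{\Grmod A}(A,A\langle i\rangle)\cong A_i$, while the right side is $H^0(\Sigma^iA)\cong A_i$, and the two are identified by $\overline{\Tot}$. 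Its essential image contains $\Tot(A)=A$, which generates $\per(A)$, so $\per(A)=\thick(\overline{\Tot}(\ch^b(\grproj A)))$; invoking Keller's construction of triangulated hulls of orbit categories then identifies $\per(A)$ with the triangulated hull. When $A$ is graded hereditary, $\ch^b(\grproj A)\simeq\cd^b(\grmod A)$ and every object is a direct sum of shifts of modules, so $\Sigma\circ\langle-1\rangle$ carries each module into a strictly different cohomological degree; this directedness is exactly Keller's sufficient condition for the orbit category to be triangulated, whence $\overline{\Tot}$ is an equivalence.

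For part (b) I would phrase all three statements through one intrinsic criterion. By the Morita theory of dg algebras, $?\lten_BU$ is a triangle equivalence if and only if $U$ is a compact generator of the target and the canonical map $B\to\RHom(U,U)$ is a quasi-isomorphism; the graded version of this criterion applies to (i). Thus each of (i), (ii), (iii) should be shown equivalent to the condition $(\star)$ that $T$ be a compact generator of $\cd(\Grmod A)$ with $B\to\cGrEnd_A(T)$ a quasi-isomorphism. The key compatibilities are that $\Tot$ and $F$ are monoidal for the relevant tensor products, giving $\Tot(M\lten_BT)\cong\Tot(M)\lten_B\Tot(T)$ and $F(M\lten_BT)\cong F(M)\lten_{FB}F(T)$, and that they send the graded compact generator $A$ to the compact generators $A$ and $FA$. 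The decisive point is the grading shift: by part (a), $\Tot$ turns $\langle1\rangle$ into $\Sigma$, so the generation condition for $\Tot(T)$ in $\per(A)$ is literally $(\star)$, and the Hom-comparison of part (a) yields $\RHom_A(\Tot T,\Tot T)\cong\Tot(\cGrEnd_A(T))=\Tot(B)=B$; dually $F$ forgets $\langle1\rangle$, and covering theory (see Marcus) gives $\RHom_{FA}(FT,FT)\cong\bigoplus_n\RHom_{\Grmod A}(T,T\langle n\rangle)=F(\cGrEnd_A(T))=FB$ together with the matching generation statement. Assembling these identifications yields (ii)$\Leftrightarrow(\star)\Leftrightarrow$(iii) and (i)$\Leftrightarrow(\star)$.

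The main obstacle I anticipate is the endomorphism-algebra bookkeeping in part (b): because $\overline{\Tot}$ is only fully faithful onto a triangulated hull rather than an equivalence, one cannot simply transport equivalences along $\Tot$, and must instead verify by hand that the compact-generator and endomorphism conditions for $\Tot(T)$, $FT$ and $T$ coincide once the grading shift is correctly accounted for (as $\Sigma$ on the dg side, and as the forgotten $\Z$-action on the ungraded side). Establishing $\RHom_A(\Tot T,\Tot T)\cong\Tot(\cGrEnd_A(T))$ as dg algebras, not merely as complexes, is the technical heart; once it is in place, the equivalence of the three conditions follows formally.
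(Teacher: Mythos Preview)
Your approach to full faithfulness in (a) by d\'evissage is a legitimate alternative to the paper's argument. The paper instead works one level up, at the dg enhancement: it shows directly that $\overline{\Tot}:\cc_{dg}(\Grmod A)\dslash[1]\circ\langle-1\rangle\to\cc_{dg}(A)$ is fully faithful by writing $\Tot M=\bigoplus_i (M^{i,*})_{\diamond i}\langle-i\rangle$ as a graded $A$-module and computing both $\cHom$-complexes explicitly. Your triangulated d\'evissage suffices for the statement of (a), but the paper's dg-level computation immediately yields the dg-algebra isomorphism $\cEnd_A(\Tot M)\cong\Tot\,\mathcal{GE}_A(M)$ (their Lemma~5.2), which is precisely the ``technical heart'' you flag at the end; with your approach that isomorphism still has to be proved separately.

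There is a genuine gap in your treatment of the hereditary case. Keller's theorem that the orbit category is already triangulated requires the hereditary abelian category to be Hom-finite and to admit a Serre functor; neither hypothesis is assumed for a general graded hereditary $A$, so you cannot simply invoke that criterion. The paper proceeds by a different and more robust mechanism: it quotes \cite{KellerYangZhou09} to the effect that for such $A$ the functor $H^*:\cd(A)\to\Grmod A$ is a bijection on isomorphism classes, so every object of $\cd(A)$ is isomorphic to its total cohomology viewed as a dg module with zero differential, hence lies in the essential image of $\Tot$. Density of $\overline{\Tot}$ then follows at once, with no finiteness hypotheses.

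For (b) your outline is essentially the paper's proof. Both arguments reduce each of (i), (ii), (iii) to ``compact generation plus Hom-comparison'' via \cite[Lemma~4.2]{Keller94}, and then use the $(\Sigma\circ\langle-1\rangle)$-orbital full faithfulness of $\Tot$ to identify the graded and dg conditions; the equivalence of (i) and (iii) is imported from Marcus. The one step you gloss over, that $\{\Sigma^pT\langle-p\rangle\mid p\in\mathbb{Z}\}$ compactly generates $\cd(\Grmod A)$ if and only if $\Tot T$ compactly generates $\cd(A)$, is isolated in the paper as a separate lemma about orbit categories (their Lemma~3.5); you should expect to prove it rather than assert it.
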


The equivalence between (i) and (iii) in (b) is part of~\cite[Theorem 2.4]{Marcus03}.

The paper is organized as follows. In Section~\ref{s:derived-cat} we
recall results on derived categories and their dg enhancements.
Section~\ref{s:orbit-category} is devoted to the study of orbit categories and Section~\ref{s:graded-alg-as-dg-alg} to
proving Theorem~\ref{t:main-thm-2}. In
Section~\ref{s:bigraded-dg-alg} we give a sufficient condition for the formality of a bigraded dg algebra and its totalization, which will be used in Section~\ref{s:derived-equi-graded-alg}. In Section~\ref{s:derived-equi-graded-alg} we study a condition under which
derived equivalent graded algebras are still derived equivalent
after a compatible change of gradings. In
Section~\ref{s:gentle-one-cycle-alg} we recall the definition of
gentle one-cycle algebras and recall results by
Assem--Skowro\'{n}ski, Vossieck and
Bobi\'nski--Geiss--Skowro\'nski.
 In Section~\ref{s:graded-a-pq} the derived category of the path algebra of a graded quiver of type $\tilde{A}$ is investigated and Theorem~\ref{t:main-thm-1.2} is proved in Sections~\ref{sss:AR-quiver-no-cycle} and~\ref{sss:AR-quiver-truncated-cycle}.
In Section~\ref{s:graded-gentle-one-cycle-alg} we prove Theorem~\ref{t:main-thm-1.1}.

\smallskip

\noindent{\it Acknowledgements.}
The second-named author thanks Zhe Han for some helpful
conversations. He expresses his deep gratitude to Bernhard Keller
for answering various questions on graded algebras, pre-triangulated
dg categories and orbit categories. The authors thank the referee for very helpful comments, including pointing out an error in the description of the Gabriel quiver in Section~\ref{sss:zigzag} and pointing out that in Theorem~\ref{thm:graded-gentle-one-cycle-is-affine-A} the assumption `$k$ is algebraically closed' is not necessary.

\section{Derived categories}\label{s:derived-cat}

By abuse of notation, we will denote by $\Sigma$ the suspension
functor of all triangulated categories. Throughout, $k$ will be a
field and $\ten=\ten_k$. All categories in this paper are $k$-categories and all functors are $k$-linear.

\subsection{Generators of triangulated
categories}\label{ss:generators}

Let $\cc$ be a triangulated category. For a set $\{X_i| i\in I\}$ of objects of
$\cc$, we denote 
by $\thick_{\cc}(X_i| i\in I)$ the
smallest triangulated subcategory of $\cc$ containing all $X_i$ and closed
under taking direct summands, and by $\Tria_{\cc}(X_i| i\in I)$ the smallest
triangulated subcategory of $\cc$ containing all $X_i$ and closed under
taking all existing direct sums in $\cc$. If it does not cause
confusion, we will omit the subscripts. If $\cc=\thick(X_i| i\in I)$, we call
$\{X_i| i\in I\}$ a set of \emph{generators} of $\cc$ and we say that $\{X_i| i\in I\}$ \emph{generates}
$\cc$.

Assume that $\cc$ has infinite (set-indexed) direct sums. An object
$X$ of $\cc$ is \emph{compact} if $\Hom_{\cc}(X,?)$ commutes with
all direct sums. A set of objects $\{X_i|i\in I\}$ is a set of \emph{compact generators} of $\cc$ if all $X_i$ are compact and
$\cc=\Tria(X_i|i\in I)$.

\begin{theorem}[{\cite[Theorem 5.3]{Keller94}}]\label{thm:compact-objects}
Assume that $\cc$ has a set of compact generators $\{X_i|i\in I\}$. Then $X\in\cc$ is compact if and only if it belongs to $\thick_\cc(X_i|i\in I)$.
\end{theorem}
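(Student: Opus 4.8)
The plan is to establish the two inclusions separately: the inclusion $\thick_\cc(X_i\mid i\in I)\subseteq\cc^{c}$ (where $\cc^{c}$ denotes the full subcategory of compact objects) is formal, while the reverse inclusion is the substance of the theorem. First I would record that $\cc^{c}$ is itself a thick subcategory of $\cc$. It is closed under $\Sigma^{\pm1}$; it is closed under cones, since for a triangle $X\to Y\to Z\to\Sigma X$ with $X,Y$ compact one compares, for a family $\{W_j\}$, the two long exact sequences obtained by applying $\Hom_\cc(-,\coprod_j W_j)$ and $\coprod_j\Hom_\cc(-,W_j)$ and concludes by the five lemma that the comparison map is an isomorphism for $Z$ as well; and it is plainly closed under finite coproducts and direct summands. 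As each $X_i$ is compact, this yields $\thick_\cc(X_i\mid i\in I)\subseteq\cc^{c}$.

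For the reverse inclusion the key device is a cellular decomposition of the objects of $\cc=\Tria_\cc(X_i\mid i\in I)$. I would show that every $M\in\cc$ arises as a homotopy colimit $M\cong\operatorname{hocolim}M_n$ of a tower $0=M_0\to M_1\to M_2\to\cdots$ in which every cone $\Cone(M_n\to M_{n+1})$ is a coproduct of suspensions of the generators $X_i$. Concretely I would let $\cu\subseteq\cc$ be the full subcategory of all such homotopy colimits, and check that $\cu$ contains each $X_i$, is closed under arbitrary coproducts, and is triangulated; by the minimality built into $\Tria_\cc(X_i\mid i\in I)$ this forces $\cu=\cc$.

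Given a compact $M$, I would then use compactness twice. Applying $\Hom_\cc(M,-)$ to the defining triangle $\coprod_n M_n\xrightarrow{1-\mathrm{shift}}\coprod_n M_n\to M\to\Sigma\coprod_n M_n$ and using that $\Hom_\cc(M,-)$ commutes with the coproducts, a standard computation shows that the natural map $\colim_n\Hom_\cc(M,M_n)\to\Hom_\cc(M,M)$ is an isomorphism; hence $\id_M$ factors through the structure map $M_n\to M$ for some $n$, so $M$ is a direct summand of $M_n$. The second use of compactness reduces the infinite coproducts inside $M_n$ to finite ones: if a compact $M$ is a retract of a coproduct $\coprod_\lambda Y_\lambda$ with each $Y_\lambda\in\thick_\cc(X_i\mid i\in I)$, then the section $M\to\coprod_\lambda Y_\lambda$ factors, by compactness, through a finite subcoproduct, so $M$ is already a retract of a finite coproduct of the $Y_\lambda$ and therefore lies in $\thick_\cc(X_i\mid i\in I)$. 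Threading this finiteness reduction through the finite tower $0=M_0\to\cdots\to M_n$ by induction on its length gives $M\in\thick_\cc(X_i\mid i\in I)$.

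The hard part will be the cellular tower step together with the passage from infinite to finite coproducts. Verifying that $\cu$ is triangulated---in particular closed under cones of arbitrary morphisms---is delicate and is where a telescope/Eilenberg--swindle argument is needed; and carrying the finiteness reduction through a multi-step tower, rather than through a single coproduct, requires the inductive bookkeeping indicated above. Everything else is formal manipulation of long exact sequences against the compactness hypothesis.
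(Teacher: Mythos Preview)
The paper does not supply its own proof of this statement; it simply quotes \cite[Theorem~5.3]{Keller94}. Your outline is essentially the standard Neeman--Keller argument that underlies that reference: show that compact objects form a thick subcategory (giving one inclusion), build a cellular tower for an arbitrary object, use compactness to factor $\id_M$ through a finite stage $M_n$, and then inductively replace the infinite coproducts in the tower by finite ones. The overall strategy is correct and matches the cited source.

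One point where your write-up deviates from the usual argument and could cause trouble: you propose to obtain the cellular tower by letting $\cu$ be the class of objects admitting such a presentation and then checking that $\cu$ is triangulated and closed under coproducts. Closure under cones of \emph{arbitrary} morphisms between objects of $\cu$ is not easy to verify directly, because a map $M\to N$ need not be compatible with chosen towers for $M$ and $N$; an Eilenberg swindle does not obviously help here. The standard route avoids this entirely: for a given $M$ one builds the tower explicitly by setting $M_0=0$ and, at each stage, mapping in a coproduct of shifted generators hitting every morphism from a shifted generator to the cofiber of $M_n\to M$; one then checks that the cofiber of $\operatorname{hocolim}M_n\to M$ receives no nonzero maps from any $\Sigma^p X_i$, whence it vanishes because the $X_i$ generate. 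This is both simpler and what is actually done in \cite{Keller94,Neeman01b}. Your inductive ``threading'' step to pass from $M_n$ to $\thick_\cc(X_i)$ is correct in spirit, but note that the induction is on maps $M\to M_n$ (not retractions), and each step requires completing to a map of triangles after replacing $C_{n-1}$ by a finite subcoproduct; this is exactly the bookkeeping you allude to.
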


It is well-known that if $\cc$ has a set of compact generators $\{X_i|i\in I\}$ then a set $\{Y_j|j\in J\}$ of compact objects of $\cc$ is a set of compact generators if and only if $\thick_\cc(Y_j|j\in J)=\thick_\cc(X_i|i\in I)$ if and only if $\Hom_\cc(Y_j,\Sigma^p Z)=0$ for all $j\in J$ and all $p\in\mathbb{Z}$ implies that $Z\cong 0$. A proof is given using \cite[Theorem 2.1]{Neeman92a}, \cite[Theorem 5.2]{Keller94} and  \cite[Theorem 9.1.16 and Proposition 9.1.19]{Neeman01b}.

\subsection{DG categories and dg
enhancements}\label{ss:dg-enhancements}

 We follow~\cite{BondalKapranov90}.

Let $\ca$ be a \emph{dg category}, \ie a category in which all
the $k$-vector spaces $\Hom_{\ca}(X,Y)$ are endowed with the
structure of a (cochain) complex such that the compositions
\[\Hom_{\ca}(Y,Z)\ten \Hom_{\ca}(X,Y)\rightarrow \Hom_{\ca}(X,Z)\]
are chain maps of complexes. We define $H^0\ca$ (respectively, $Z^0\ca$) to be the
$k$-category whose objects are the same as $\ca$ and whose morphism
space $\Hom_{H^0\ca}(X,Y)$ is the 0-th cohomology of the complex
$\Hom_{\ca}(X,Y)$ (respectively, the set of closed morphisms from $X$ to $Y$ of degree $0$). A \emph{dg functor} $F:\ca\rightarrow\cb$ between
dg categories is a  functor such that each morphism
$F(X,Y):\Hom_{\ca}(X,Y)\rightarrow\Hom_{\cb}(FX,FY)$ is a chain map
of complexes. We define $H^0F$ to be the $k$-linear functor
$H^0F:H^0\ca\rightarrow H^0\cb$ such that $H^0FX=FX$ and
$(H^0F)(X,Y)=H^0F(X,Y)$.

The category $\cc_{dg}(k)$ whose objects are complexes of
$k$-vector spaces and whose morphism space
$\Hom_{\cc_{dg}(k)}(X,Y)$ is $\cHom_k(X,Y)$ given by
\[\cHom_k^n(X,Y)=\prod_{m\in\mathbb{Z}}\Hom_k(X^m,Y^{m+n})\]
with $d(f)=d_Y\circ f-(-1)^n f\circ d_X$ for $f\in\cHom_k^n(X,Y)$ is a
dg category. We denote by $[1]:\cc_{dg}(k)\rightarrow\cc_{dg}(k)$
the shift functor, which is a dg functor mapping a homogeneous
morphism $f$ of degree $n$ to $(-1)^n f$.

Let $\ca$ be a dg category. For any object $X$ of $\ca$, the
functors $\Hom_{\ca}(X,?):\ca\rightarrow\cc_{dg}(k)$ and
$\Hom_{\ca}(?,X):\ca^{op}\rightarrow\cc_{dg}(k)$ are dg functors.
Let $X$ be an object of $\ca$, we define a dg functor
$F_X:Y\mapsto \Hom_{\ca}(Y,X)[1]$. For
$f:X\rightarrow Y$ a closed morphism of degree $0$, we define a dg
functor $F_f:Z\mapsto \Cone(\Hom_{\ca}(Z,f))$. The dg category $\ca$
is said to be \emph{pre-triangulated} if it has a zero object and the dg functors $F_X$
($X\in\ca$) and $F_f$ ($f$ closed morphism of degree $0$) are representable, that is, there are objects $X[1]$ and  $\Cone(f)$ of $\ca$ with isomorphisms of dg functors $F_X\cong \Hom_{\ca}(?,X[1])$ and $F_f\cong \Hom_{\ca}(?,\Cone(f))$.

\begin{theorem}\emph{(\cite[Proposition 3.2]{BondalKapranov90})}
Let $\ca$ be a pre-triangulated category. Then $H^0\ca$ carries a
natural triangle structure with the suspension functor being
$\Sigma=H^0[1]$. The triangle associated to a morphism $f:X\rightarrow Y$ in $H^0\ca$ is \[X\stackrel{f}{\rightarrow} Y\rightarrow \Cone{\tilde{f}}\rightarrow X[1],\]
where $\tilde{f}$ is a lift of $f$ in $\ca$ and the last two morphisms are canonical morphisms associated to $\Cone(\tilde{f})$.
\end{theorem}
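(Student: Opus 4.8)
The plan is to equip $H^0\ca$ with the suspension $\Sigma=H^0[1]$ and to declare a triangle distinguished precisely when it is isomorphic in $H^0\ca$ to a standard one $X\xrightarrow{f}Y\to\Cone(\tilde f)\to X[1]$ attached to a closed degree-$0$ lift $\tilde f$ of $f$; one then verifies the axioms (TR1)--(TR4). The engine of every verification is the pair of natural isomorphisms of complexes granted by the pre-triangulated hypothesis,
\[\Hom_\ca(Z,X[1])\cong\Hom_\ca(Z,X)[1],\qquad \Hom_\ca(Z,\Cone(\tilde f))\cong\Cone(\Hom_\ca(Z,\tilde f)),\]
natural in $Z$. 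Equivalently, the dg Yoneda embedding $X\mapsto\Hom_\ca(?,X)$ into the dg category of contravariant dg functors $\ca^{op}\to\cc_{dg}(k)$ is fully faithful and, by these two isomorphisms, identifies $\ca$ with a full dg subcategory closed under the shift and under mapping cones, where those operations are the standard ones on complexes. This reduces each axiom to the corresponding classical identity for shifts and cones of complexes of $k$-vector spaces; the role of the displayed isomorphisms is exactly to transport those model computations back to $\ca$.

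First I would dispatch the preliminaries. A zero object is assumed; that $\Sigma=H^0[1]$ is an autoequivalence is part of the pre-triangulated structure (functoriality of representing objects makes $[1]$ a dg endofunctor, the first displayed isomorphism shows $H^0[1]$ is full and faithful, and $[-1]$, the shift in the opposite direction, provides a quasi-inverse). Binary biproducts then exist: for objects $A,B$, applying the cone isomorphism to the zero morphism $A[-1]\to B$ gives $\Cone(0)\cong A\oplus B$ in $H^0\ca$ (the cone of a null map splits), so $H^0\ca$ is additive. For (TR1): every morphism of $H^0\ca$ lifts to a closed degree-$0$ morphism and hence extends to a standard triangle; closure under isomorphism holds by fiat; and the identity triangle is distinguished because $\Hom_\ca(Z,\Cone(\id_X))\cong\Cone(\id_{\Hom_\ca(Z,X)})$ is contractible for every $Z$, so $\Cone(\id_X)\cong 0$ by Yoneda.

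For (TR2) I would prove the forward rotation and then use that $\Sigma$ is an equivalence to get the converse. Concretely, for the standard triangle of $\tilde f$ with canonical maps $Y\xrightarrow{\tilde u}\Cone(\tilde f)\xrightarrow{\tilde v}X[1]$, the classical identification $\Cone(\tilde u)\cong X[1]$ of complexes (the cone of a cone-inclusion is the shift of the source) shows the rotated triangle is isomorphic to the standard triangle of $\tilde u$, provided one matches the connecting morphism with $-\Sigma f$; pinning down this sign against the conventions for $[1]$ and $\Cone$ is the one delicate bookkeeping point here. For (TR3), given a commutative square $v f=f'u$ in $H^0\ca$ between two standard triangles, I lift $u,v$ to closed degree-$0$ morphisms $\tilde u,\tilde v$; the relation $\tilde v\tilde f-\tilde f'\tilde u=d(s)$ holds for some degree-$(-1)$ morphism $s$, and the filler is then $\tilde w=\left(\begin{smallmatrix}\tilde u[1]&0\\ s&\tilde v\end{smallmatrix}\right)$ on $\Cone(\tilde f)\cong X[1]\oplus Y$, the homotopy $s$ being exactly what makes $\tilde w$ a closed morphism and the squares commute.

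The main obstacle is the octahedral axiom (TR4). Given composable $f\colon X\to Y$ and $g\colon Y\to Z$ with lifts $\tilde f,\tilde g$, I would build the octahedron from the canonical closed morphisms among $\Cone(\tilde f)$, $\Cone(\tilde g)$ and $\Cone(\tilde g\tilde f)$ together with the natural isomorphism $\Cone\!\big(\Cone(\tilde f)\to\Cone(\tilde g\tilde f)\big)\cong\Cone(\tilde g)$, checking that all four faces are distinguished and that the two triangles over $X[1]$ and $Z$ commute. Transported through the Yoneda identification this is the standard octahedron for cones of complexes, but assembling the explicit degree-$0$ maps and verifying the sign-consistent commutativity of every face is the most laborious part of the argument; it is where the explicit matrix form of the cone differential, rather than just the abstract representability, is genuinely used.
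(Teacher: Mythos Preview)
The paper does not give its own proof of this statement: it is stated as a result of Bondal--Kapranov and cited as \cite[Proposition 3.2]{BondalKapranov90} without argument. So there is no in-paper proof to compare against.

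Your sketch is essentially the standard Bondal--Kapranov argument: use the dg Yoneda embedding to transport the pre-triangulated structure to the pointwise shift and cone on dg functors $\ca^{op}\to\cc_{dg}(k)$, and then verify (TR1)--(TR4) by reducing to the familiar identities for cones of complexes. The individual steps you outline (splitting of $\Cone(0)$ for biproducts, contractibility of $\Cone(\id_X)$, the filler $\tilde w$ built from a homotopy witnessing $\tilde v\tilde f-\tilde f'\tilde u=d(s)$, and the octahedron from the isomorphism $\Cone(\Cone(\tilde f)\to\Cone(\tilde g\tilde f))\cong\Cone(\tilde g)$) are the correct ones, and your identification of the sign in (TR2) and the octahedral bookkeeping as the places requiring care is accurate. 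This is a sound proof plan; it simply goes beyond what the paper itself supplies.
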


Let $\cc$ be a triangulated category and $\ca$ be a dg category. We
say that $\ca$ is a \emph{dg enhancement} of $\cc$ if $\ca$ is
pre-triangulated and $\cc$ is triangle equivalent to $H^0\ca$
endowed with the above natural triangle structure. For $\cc'$ a triangulated subcategory of $\cc$, let $\ca'$ be the dg subcategory of $\ca$ consisting of objects in the essential image of $\cc'$ in $H^0\ca$. Then $\ca'$ is a pre-triangulated dg subcategory of $\ca$ and is a dg enhancement of $\cc'$.

Let $\ca$ and $\cb$ be two pre-triangulated dg categories and $F:\ca\rightarrow\cb$ be a dg functor. Then $F$ canonically commutes with $[1]$ and $\Cone$, and induces a triangle functor $H^0F:H^0\ca\rightarrow H^0\cb$.

\subsection{Derived categories of abelian
categories}\label{ss:derived-cat-of-abelian-cat}

Let $\ca$ be an additive category. For $*\in\{b,+,-,\emptyset\}$,
let $\ch^*(\ca)$ (respectively, $\cc(\ca)$) be the homotopy category (respectively, the category) of complexes of objects in
$\ca$ satisfying the corresponding boundedness condition.

For $X$ and $Y$ two complexes of objects in $\ca$, we define
$\cHom_{\ca}(X,Y)$ to be the complex whose degree $n$ component is
\[\cHom_{\ca}^n(X,Y)=\prod_{m\in\mathbb{Z}}\Hom_{\ca}(X^m,Y^{m+n})\]
and whose differential is given by
\[d(f)=d_Y\circ f-(-1)^n f\circ d_X\]
for $f\in\cHom_\ca^n(X,Y)$. For $*\in\{b,+,-,\emptyset\}$, we define
$\cc_{dg}^*(\ca)$ to be the category whose objects are complexes of
objects in $\ca$ satisfying the corresponding boundedness condition
and whose morphism spaces are
$\Hom_{\cc_{dg}^*(\ca)}(X,Y)=\cHom_{\ca}(X,Y)$. The dg shift functor $[1]:\cc_{dg}^*(\ca)\rightarrow\cc_{dg}^{*}(\ca)$ takes value the complex shift $X[1]$ on an object $X$ and sends a homogeneous morphism $f$ of degree $n$ to $(-1)^n f$. For a closed morphism $f:X\rightarrow Y$ of degree $0$, the dg functor $F_f$ is represented by the mapping cone of $f$.

\begin{proposition}[\cite{BondalKapranov90}]
The category $\cc_{dg}^*(\ca)$ is a pre-triangulated dg category
such that $Z^0\cc_{dg}(\ca)=\cc(\ca)$ and $H^0\cc_{dg}^*(\ca)=\ch^*(\ca)$.
\end{proposition}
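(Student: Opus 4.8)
The plan is to unwind the definition of the Hom-complex $\cHom_\ca$ and read off the three assertions directly: the two identifications of categories amount to computing cycles and boundaries in degree $0$, while pre-triangulatedness amounts to exhibiting a zero object and two natural isomorphisms of dg functors. Almost everything is a transcription of definitions; the only genuine work is sign bookkeeping.

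First I would dispose of the identifications $Z^0\cc_{dg}(\ca)=\cc(\ca)$ and $H^0\cc_{dg}^*(\ca)=\ch^*(\ca)$, which need only the description of degree-$0$ cycles and boundaries. For complexes $X,Y$, a degree-$0$ element $f=(f^m)_m\in\cHom_\ca^0(X,Y)$ is a family $f^m\colon X^m\to Y^m$, and $d(f)=d_Y\circ f-f\circ d_X$ has component $d_Y^m f^m-f^{m+1}d_X^m$; hence $f$ is closed precisely when $d_Y^m f^m=f^{m+1}d_X^m$ for all $m$, i.e.\ precisely when $f$ is a chain map. Since the objects are unchanged, this gives $Z^0\cc_{dg}(\ca)=\cc(\ca)$. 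For the homotopy category, a closed $f$ of degree $0$ is a boundary iff $f=d(h)$ for some $h\in\cHom_\ca^{-1}(X,Y)$; expanding $d(h)=d_Y\circ h-(-1)^{-1}h\circ d_X$ (so the sign $-(-1)^{-1}=+1$ appears on the second term) yields $f^m=d_Y^{m-1}h^m+h^{m+1}d_X^m$, exactly the condition that $f$ be null-homotopic with homotopy $h$. Thus $H^0\cHom_\ca(X,Y)=\Hom_{\ch^*(\ca)}(X,Y)$ and $H^0\cc_{dg}^*(\ca)=\ch^*(\ca)$.

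It remains to check that $\cc_{dg}^*(\ca)$ is pre-triangulated, and here there is some (routine) content. The zero complex is evidently a zero object, and since the complex shift $X[1]$ and the mapping cone $\Cone(f)=X[1]\oplus Y$ of a closed degree-$0$ morphism are formed levelwise from finite biproducts, they again satisfy the boundedness condition $*$ and so lie in $\cc_{dg}^*(\ca)$. To see that $X[1]$ represents $F_X$ I would produce a natural isomorphism of dg functors $\cHom_\ca(?,X[1])\cong\cHom_\ca(?,X)[1]$: because $\Hom_\ca(Z^m,-)$ is additive and the shift is defined degreewise, the two sides coincide as graded objects, so the point is that the comparison map intertwines the differentials. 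Likewise, using additivity of $\Hom_\ca$ in the second variable together with $\Cone(f)=X[1]\oplus Y$, I would produce an isomorphism $\cHom_\ca(?,\Cone(f))\cong\Cone(\cHom_\ca(?,f))$ in $\cc_{dg}(k)$, natural in the first variable; representability of $F_X$ and $F_f$ then follows.

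The hard part — really the only part that is not immediate from the definitions — is the sign bookkeeping required to promote these two comparison maps from isomorphisms of \emph{graded} functors to honest isomorphisms of \emph{dg} functors. One must check that the sign convention for the shift $[1]$ on $\cc_{dg}(k)$ (which multiplies a degree-$n$ morphism by $(-1)^n$), the differential $-d_X$ of the complex shift $X[1]$, and the differential of the mapping cone are mutually consistent, so that on each graded piece the identity map commutes with the respective differentials; and one must verify that the resulting isomorphisms are natural in the variable $Z$, i.e.\ compatible with the dg structure. Once the signs are pinned down these checks are mechanical, and this completes the verification that $\cc_{dg}^*(\ca)$ is pre-triangulated.
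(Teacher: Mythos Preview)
The paper does not give its own proof of this proposition: it is stated with a citation to \cite{BondalKapranov90}, and the surrounding text only records (without verification) that the complex shift $X[1]$ represents $F_X$ and the mapping cone represents $F_f$. Your write-up supplies exactly the standard argument behind those assertions and is correct; in particular your computations of $Z^0$ and $H^0$ of the Hom-complex are accurate, and your outline of the representability of $F_X$ and $F_f$ via the levelwise biproduct description of $X[1]$ and $\Cone(f)$ is the intended one. There is nothing to compare approaches against, since the paper defers entirely to the reference.
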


Let $\ca$ be an abelian category. For a $*\in\{b,+,-,\emptyset\}$,
the \emph{derived category} $\cd^*(\ca)$ is the triangle quotient of
the homotopy category $\ch^*(\ca)$ by its subcategory of acyclic
complexes of objects in $\ca$. Following~\cite{Spaltenstein88}, we
say that an object $X$ of $\cc_{dg}(\ca)$ is \emph{$\ch$-projective} if
$\cHom_{\ca}(X,?)$ preserves acyclicity. Complexes of projective objects bounded from the right are $\ch$-projective.

Assume further that $\ca$ has enough projectives, direct limits exist in $\ca$ and taking direct limit is exact. Then

\begin{proposition}[{\cite[Theorem 3.4 and Corollary 3.5]{Spaltenstein88}}] Every object $M$ of $\cc_{dg}(\ca)$ has an $\ch$-projective resolution, \ie a quasi-isomorphism $\mathrm{p}M\rightarrow M$ with $\mathrm{p}M$ being $\ch$-projective. Moreover, each $\ch$-projective object is isomorphic in $\ch(\ca)$ to a \emph{cofibrant object}, \ie an object $F$ which admits a filtration
\[
0=F_0\subseteq F_1\subseteq\ldots\subseteq F_p\subseteq F_{p+1}\subseteq\ldots\subseteq F
\]
such that
\begin{itemize}
\item[(F1)] $F$ is the union of the $F_p$, $p\in\mathbb{N}\cup\{0\}$,
\item[(F2)] the inclusion morphism $F_{p-1}\subseteq F_p$ splits in every degree, for any $p\in\mathbb{N}$,
\item[(F3)] the subquotient $F_p/F_{p-1}$ is isomorphic in $Z^0\cc_{dg}(\ca)$ to a direct summand of a direct sum of shifts of projective objects in $\ca$, for any $p\in\mathbb{N}$.
\end{itemize}
\end{proposition}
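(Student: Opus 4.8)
The plan is to produce, for every object $M$ of $\cc_{dg}(\ca)$, a \emph{cofibrant} object $F$ of the form described in (F1)--(F3) together with a quasi-isomorphism $F\ra M$, and then to deduce both assertions from this one construction. I would organize the argument in three steps: first show that cofibrant objects are $\ch$-projective, then build the cofibrant resolution by iterated attachment of projective ``cells'', and finally bootstrap from the resolution to the stated description of $\ch$-projective objects. For Step 1, note that a shift $P[j]$ of a projective object $P$ of $\ca$ is $\ch$-projective, since $\cHom_\ca(P[j],X)$ is, up to shift, the complex $\Hom_\ca(P,X)$, and projectivity of $P$ makes $\Hom_\ca(P,-)$ exact, so this complex is acyclic whenever $X$ is. Condition (F2) makes each sequence $0\ra F_{p-1}\ra F_p\ra F_p/F_{p-1}\ra 0$ degreewise split, so $\cHom_\ca(-,X)$ carries it to a short exact sequence of complexes; combined with (F3) and the long exact cohomology sequence, induction shows every $F_p$ is $\ch$-projective. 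Finally $\cHom_\ca(F,X)=\lim_p\cHom_\ca(F_p,X)$ is an inverse limit along degreewise surjective transition maps (again by (F2)), so its $\lim^1$-term vanishes and it is acyclic for acyclic $X$; thus $F$ is $\ch$-projective.

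For the construction in Step 2 I would start from $F_0=0$, $f_0=0$, and build chain maps $f_p\colon F_p\ra M$ inductively, at each stage attaching two kinds of cells using that $\ca$ has enough projectives. To make $H^\ast(f_p)$ surjective I choose projectives surjecting onto $H^n(M)$, lift them through $Z^n(M)\twoheadrightarrow H^n(M)$ by projectivity, and adjoin the resulting shifted projectives (``spheres''). To kill $\ker H^\ast(f_p)$ I choose projectives surjecting onto those cocycles of $F_p$ that become boundaries in $M$ and adjoin the corresponding acyclic two-term complexes $(P\xrightarrow{\id}P)$ (``disks''), which supply the missing primitives. Each subquotient $F_{p+1}/F_p$ is then a direct sum of shifts of projectives and disks; refining the filtration so that the two terms of every disk fall into consecutive levels produces a filtration satisfying (F1)--(F3). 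Setting $F=\colim_p F_p=\bigcup_p F_p$ and $f=\colim_p f_p$ completes the object.

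The decisive point is that $f\colon F\ra M$ is a quasi-isomorphism, and here I expect the \emph{main obstacle} to lie. I would invoke the hypothesis that direct limits in $\ca$ are exact, so that cohomology commutes with the filtered colimit: $H^n(F)=\colim_p H^n(F_p)$. Surjectivity of $H^n(f)$ is then immediate, since every class of $H^n(M)$ is hit at a finite stage; injectivity follows because any class of $H^n(F_p)$ mapping to zero in $H^n(M)$ is killed at stage $p+1$ and hence vanishes in the colimit. This is precisely the step that breaks down for unbounded complexes without exact direct limits: for a complex bounded from the right the classical projective resolution already stabilizes degreewise and one reads off $\ch$-projectivity directly, whereas in the unbounded case the resolution only ``converges in the colimit'', and controlling this convergence simultaneously for cohomology and for $\ch$-projectivity is the entire content of Spaltenstein's argument.

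Finally, for Step 3 I would take $\mathrm{p}M:=F$, which settles the first assertion. For the second, let $N$ be $\ch$-projective and let $g\colon F\ra N$ be a cofibrant resolution built as above. Both $F$ (by Step 1) and $N$ are $\ch$-projective, so $\Cone(g)$ is $\ch$-projective; since $g$ is a quasi-isomorphism $\Cone(g)$ is also acyclic, and an acyclic $\ch$-projective complex $C$ is contractible, as $\cHom_\ca(C,C)$ is acyclic and so $\id_C$ is null-homotopic. Hence the triangle $F\ra N\ra\Cone(g)\ra F[1]$ shows that $g$ is an isomorphism in $\ch(\ca)$, so $N$ is isomorphic in $\ch(\ca)$ to the cofibrant object $F$, as required.
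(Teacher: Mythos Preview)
The paper does not supply its own proof of this proposition: it is stated with a citation to \cite{Spaltenstein88} and then used as a black box. Your sketch is a faithful reconstruction of the standard cell-attachment argument and is essentially correct, including your identification of the exact-direct-limits hypothesis as the crux of the passage from bounded to unbounded complexes.

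One small imprecision in Step~2: ``adjoining disks'' to kill classes in $\ker H^*(f_p)$ should be read as forming the mapping cone of a map from a shifted projective $P$ into $F_p$ hitting the chosen cocycle (equivalently, a pushout of the inclusion of the boundary sphere into the disk along the attaching map), so that the subquotient $F_{p+1}/F_p$ is already a single shift of a projective rather than a two-term contractible complex; no subsequent refinement of the filtration is needed. Simply taking the direct sum with a contractible disk, as your phrasing might suggest, would not change cohomology. With this reading your Steps~1--3 go through as written.
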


As a consequence, the quotient functor $\ch(\ca)\to\cd(\ca)$ admits a left adjoint $\cd(\ca)\to\ch(\ca)$, which is fully faithful and  identifies $\cd(\ca)$ with the full subcategory of $\ch(\ca)$ consisting of $\ch$-projective objects.

\subsection{Derived categories of dg
algebras}\label{ss:derived-cat-of-dg-alg}

This subsection is parallel to the preceding subsection. We
follow~\cite{Keller94,Keller06d,Keller98c}.

Let $A$ be a \emph{dg algebra}, \ie a dg category with one object.
More precisely, $A=\bigoplus_{i\in\mathbb{Z}}A^i$ is a graded $k$-algebra endowed with a differential $d_A$ of degree $1$ such that the graded Leibniz rule holds
\[d_A(aa')=d_A(a)a'+(-1)^{|a|}ad_A(a'),\]
where $a$ is homogeneous of degree $|a|$.
A (right) \emph{dg
$A$-module} is a complex $M$ (of $k$-vector spaces) endowed with an
$A$-action from the right
\[M^i\ten A^j\rightarrow M^{i+j}, m\ten a\mapsto ma\]
such that the graded Leibniz rule holds
\[d_M(ma)=d_M(m)a+(-1)^{|m|}md_A(a),\]
where $m\in M$ is homogeneous of degree $|m|$.

For two dg $A$-modules $M$ and $N$, we define $\cHom_A(M,N)$ to be
the subcomplex of $\cHom_k(M,N)$ whose degree $i$ component is the
subspace
\begin{eqnarray*}\cHom_A^i(M,N)&=&\{f\in\prod_{j\in\mathbb{Z}}\Hom_k(M^j,N^{i+j})|\\
&&f(ma)=f(m)a\text{ for any }  m\in M \text{ and any } a\in
A\}\\
&=&\Hom_{\Grmod A}(M,N\langle i\rangle).\end{eqnarray*} Let
$\cc_{dg}(A)$ denote the category whose objects are dg $A$-modules
and whose morphism spaces are
$\Hom_{\cc_{dg}(A)}(M,N)=\cHom_A(M,N)$. For a dg $A$-module $M$, one checks that the complex shift $X[1]$ is still a dg $A$-module with the $A$-action unchanged. Actually $[1]:\cc_{dg}(A)\rightarrow\cc_{dg}(A)$ a dg
functor, it sends a homogeneous morphism $f$ of degree $n$ to
$(-1)^n f$. For a closed morphism $f:M\rightarrow N$ of degree $0$, it is easy to check that the mapping cone of $f$ is again a dg $A$-module and it represents the dg functor $F_f$.

\begin{proposition}
The category $\cc_{dg}(A)$ is a pre-triangulated dg category.
\end{proposition}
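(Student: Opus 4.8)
The plan is to verify directly the three requirements in the definition of a pre-triangulated dg category for $\ca=\cc_{dg}(A)$: that it has a zero object, that each functor $F_M\colon Y\mapsto \cHom_A(Y,M)[1]$ is representable, and that each functor $F_f\colon Z\mapsto \Cone(\cHom_A(Z,f))$, for $f\colon M\to N$ a closed morphism of degree $0$, is representable. The zero object is immediate: the dg $A$-module whose underlying complex is $0$ satisfies $\cHom_A(0,?)=0=\cHom_A(?,0)$, hence is a zero object. The remaining two points amount to producing explicit representing objects, namely the complex shift $M[1]$ and the mapping cone $\Cone(f)$, and this runs in close parallel to the Bondal--Kapranov result for $\cc_{dg}^*(\ca)$ quoted above; the only genuine extra content is that these two constructions respect the $A$-action.

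For the shift I would first check that $M[1]$, carried by the shifted complex with differential $-d_M$ and the \emph{same} $A$-action, is again a dg $A$-module. Since the degree of a homogeneous $m$ drops by one in $M[1]$, the sign in the graded Leibniz rule flips, and a one-line computation gives $-d_M(ma)=(-d_M m)a+(-1)^{|m|-1}m\,d_A(a)$, so the Leibniz rule holds for $M[1]$. Representability then reduces to a natural isomorphism $\cHom_A(Y,M)[1]\cong\cHom_A(Y,M[1])$. On graded pieces this is the tautology $\cHom_A^n(Y,M[1])=\cHom_A^{n+1}(Y,M)=(\cHom_A(Y,M)[1])^n$, since $M[1]^{n+j}=M^{n+1+j}$ and the $A$-linearity condition is unchanged; it remains only to match the two differentials, which agree once the sign convention $d_{[1]}=-d$ of the shift functor on $\cc_{dg}(k)$ is used. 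This identifies $F_M$ with $\cHom_A(?,M[1])$.

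For the cone I would take $\Cone(f)=M[1]\oplus N$ with the usual cone differential, whose off-diagonal entry is built from the closed $A$-linear map $f$; because $f$ is $A$-linear and $M[1]$, $N$ are dg $A$-modules, the cone differential is again $A$-linear and satisfies the Leibniz rule, so $\Cone(f)$ is a dg $A$-module. For representability one then uses that $\cHom_A(Z,?)$ is additive together with the shift identity of the previous step, giving $\cHom_A(Z,\Cone(f))=\cHom_A(Z,M[1])\oplus\cHom_A(Z,N)\cong\cHom_A(Z,M)[1]\oplus\cHom_A(Z,N)$, followed by a check that the differential induced by the cone structure on $\Cone(f)$ corresponds under this identification to the differential of $\Cone(\cHom_A(Z,f))$, both being assembled from $\cHom_A(Z,f)$ in the same way. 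This exhibits $F_f\cong\cHom_A(?,\Cone(f))$ and completes the verification.

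The part requiring the most care is not any single conceptual step but the sign bookkeeping: confirming that the shifted and cone differentials still obey the graded Leibniz rule over the dg algebra $A$, as opposed to merely being complexes of $k$-vector spaces, and that the two natural isomorphisms are compatible with differentials and not only with the underlying graded structure. All of these are routine once the shift sign convention is fixed, which is precisely why the construction parallels the abelian-category case of $\cc_{dg}^*(\ca)$.
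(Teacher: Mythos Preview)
Your proposal is correct and follows the same approach as the paper. The paper does not give a separate proof after the proposition; instead the paragraph immediately preceding it already records that the complex shift $M[1]$ with unchanged $A$-action is a dg $A$-module and that the mapping cone of a closed degree-$0$ morphism is a dg $A$-module representing $F_f$, which is exactly the content you spell out in more detail (including the zero object and the sign bookkeeping the paper leaves implicit).
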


The triangulated category $\ch(A)=H^0\cc_{dg}(A)$ is called the
\emph{homotopy category} of dg $A$-modules. Then the
\emph{derived category} of dg $A$-modules (or the \emph{derived
category} of $A$), denoted by $\cd(A)$, is defined as the triangle quotient of $\ch(A)$ by the
(triangulated) subcategory of acyclic complexes. We will denote
$\per(A)=\thick_{\cd(A)}(A)$, and denote by $\cd_{fd}(A)$ the triangulated
subcategory of $\cd(A)$ consisting of those dg $A$-modules whose
total cohomology is finite-dimensional. When $A$ is a finite-dimensional
algebra viewed as a dg algebra concentrated in degree $0$, we have
triangle equivalences $\cd(A)=\cd(\Mod A)$, $\cd^b(\mod A)\simeq\cd_{fd}(A)$ and $\ch^b(\proj A)\simeq\per(A)$.

 A dg
$A$-module $M$ is said to be \emph{$\ch$-projective} if the dg
functor $\cHom_A(M,?):\cc_{dg}(A)\rightarrow \cc_{dg}(k)$ preserves
acyclicity.

\begin{theorem}[{\cite[Theorem 3.1]{Keller94}}]\label{t:existence-of-resolutions}
Any dg $A$-module admits an $\ch$-projective resolution, \ie a quasi-isomorphism $\mathrm{p}M\rightarrow M$ with $\mathrm{p}M$ being $\ch$-projective. Moreover, each $\ch$-projective dg module is isomorphic in $\ch(\ca)$ to a \emph{cofibrant dg module}, \ie a dg module $F$ which admits a filtration
\[
0=F_0\subseteq F_1\subseteq\ldots\subseteq F_p\subseteq F_{p+1}\subseteq\ldots\subseteq F
\]
such that
\begin{itemize}
\item[(F1)] $F$ is the union of the $F_p$, $p\in\mathbb{N}$,
\item[(F2)] the inclusion morphism $F_{p-1}\subseteq F_p$ splits in every degree, for any $p\in\mathbb{N}$,
\item[(F3)] the subquotient $F_p/F_{p-1}$ is isomorphic in $Z^0\cc_{dg}(A)$ to a direct summand of a direct sum of shifts of $A$, for any $p\in\mathbb{N}$.
\end{itemize}
\end{theorem}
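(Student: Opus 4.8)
The plan is to establish both assertions together. First I would construct, for any dg $A$-module $M$, a cofibrant dg module $F$ with a quasi-isomorphism $F\to M$; then show that cofibrant dg modules are $\ch$-projective; and finally combine these to obtain the $\ch$-projective resolution and to identify arbitrary $\ch$-projective modules, up to homotopy, with cofibrant ones. The construction is parallel to the argument for $\cc_{dg}(\ca)$ recalled in the previous subsection, with shifts of $A$ playing the role of the projective objects.

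For the construction I would use an iterated ``killing the cohomology of the mapping cone'' procedure. Set $F_0=0$. Given $f_p\colon F_p\to M$, form the cone $C_p=\Cone(f_p)$, whose degree-$n$ cocycles are pairs $(m,x)$ with $x$ a cocycle of $F_p$ in degree $n+1$ and $m\in M^n$ satisfying $d_M(m)=f_p(x)$ (signs aside). To kill the class $[(m,x)]$ I adjoin a free generator $e$ of degree $n$ with $d(e)=x$, forming $F_{p+1}=F_p\oplus eA$ with the twisted differential, and extend the map by $f_{p+1}(e)=m$; the cocycle condition is exactly what makes this compatible with the differentials, and in $C_{p+1}$ the element $(m,x)$ becomes a coboundary. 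Performing this simultaneously over a generating set of $H^\ast(C_p)$ produces an inclusion $F_p\subseteq F_{p+1}$ that is degreewise split, with quotient a direct sum of shifts of $A$; thus (F2) and (F3) hold, and the transition map $H^\ast(C_p)\to H^\ast(C_{p+1})$ vanishes.

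Setting $F=\colim_p F_p=\bigcup_p F_p$ with the induced filtration, conditions (F1)--(F3) hold by construction. Since filtered colimits of $k$-vector spaces are exact, cohomology commutes with this colimit, and as $\Cone$ commutes with direct sums one has $\Cone(f)=\colim_p C_p$ for the colimit map $f\colon F\to M$. Because every transition map on $H^\ast(C_p)$ is zero, $H^\ast(\Cone(f))=\colim_p H^\ast(C_p)=0$, so $f$ is a quasi-isomorphism and $\mathrm{p}M:=F$ is the desired cofibrant replacement. The crux is then to check that such a cofibrant $F$ is $\ch$-projective, \ie $\cHom_A(F,N)$ is acyclic for every acyclic $N$. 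For a shift one has $\cHom_A(A[n],N)\cong N[-n]$, which is acyclic; since $\cHom_A(-,N)$ sends direct sums to products, and products and direct summands preserve acyclicity of complexes of $k$-vector spaces, (F3) gives that $\cHom_A(F_p/F_{p-1},N)$ is acyclic. The degreewise-split sequences $0\to F_{p-1}\to F_p\to F_p/F_{p-1}\to 0$ of (F2) yield short exact sequences of $\cHom_A(-,N)$-complexes, so induction on $p$ shows each $\cHom_A(F_p,N)$ is acyclic. Finally $\cHom_A(F,N)=\varprojlim_p\cHom_A(F_p,N)$ is an inverse limit with degreewise surjective transition maps, so $\varprojlim^{1}=0$ by Mittag--Leffler and the Milnor sequence forces $\cHom_A(F,N)$ to be acyclic. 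I expect this passage to the inverse limit, together with the bookkeeping ensuring that the cone transition maps vanish, to be the main technical obstacle.

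It remains to treat the ``moreover'' clause. The key observation is that an acyclic $\ch$-projective module $P$ is contractible: $\cHom_A(P,P)$ is acyclic, so $\id_P=0$ in $H^0\cHom_A(P,P)$, \ie $P\cong 0$ in $\ch(A)$. Consequently a quasi-isomorphism between $\ch$-projective modules has an acyclic and $\ch$-projective (hence contractible) cone, and is therefore an isomorphism in $\ch(A)$. Given an arbitrary $\ch$-projective module $N$, the construction above supplies a quasi-isomorphism $F\to N$ with $F$ cofibrant, hence $\ch$-projective; by the preceding remark $N\cong F$ in $\ch(A)$, which is precisely the assertion.
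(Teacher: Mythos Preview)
Your proof is correct and follows the standard construction: iterated attachment of free cells to kill the cohomology of the cone, verification that cofibrant implies $\ch$-projective via the Mittag--Leffler/Milnor argument, and the observation that quasi-isomorphisms between $\ch$-projective modules are homotopy equivalences. Note, however, that the paper does not give its own proof of this statement at all; it is simply cited from \cite[Theorem 3.1]{Keller94}, so there is nothing in the paper to compare your argument against. Your write-up is essentially a reconstruction of Keller's original proof.
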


As a consequence, the quotient functor $\ch(A)\to\cd(A)$ admits a left adjoint $\cd(A)\to\ch(A)$, which is fully faithful and identifies $\cd(A)$ with the full subcategory of $\ch(A)$ consisting of $\ch$-projective dg $A$-modules.

\begin{theorem}[\cite{Keller94}]\label{t:compact-generator}
\begin{itemize}
\item[(a)] The free dg $A$-module of rank $1$ is a compact generator of the triangulated category $\cd(A)$.
\item[(b)] A dg $A$-module is compact in  $\cd(A)$ if and only it belongs to
$\per(A)$, and it is a compact generator if and only if it is a
generator of $\per(A)$.
\end{itemize}
\end{theorem}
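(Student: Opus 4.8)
The plan is to treat (a) first, since (b) will follow formally from (a) together with Theorem~\ref{thm:compact-objects}. Write $A$ for the free dg module of rank one. Compactness is the easy half: since $A$ is $\ch$-projective, $\Hom_{\cd(A)}(A,M)=\Hom_{\ch(A)}(A,M)$, and a closed degree-zero morphism $A\to M$ is determined by the image of $1\in A$, a $0$-cocycle of $M$, with null-homotopies corresponding to coboundaries; hence $\Hom_{\cd(A)}(A,\Sigma^n M)\cong H^n(M)$, naturally in $M$. As cohomology commutes with arbitrary direct sums, so does $\Hom_{\cd(A)}(A,?)$, and $A$ is compact.

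For generation I would show $\cd(A)=\Tria(A)$. Given $M\in\cd(A)$, replace it by a cofibrant resolution $F$ using Theorem~\ref{t:existence-of-resolutions}, so that $F\cong M$ in $\cd(A)$ and $F$ carries a filtration $0=F_0\subseteq F_1\subseteq\cdots$ satisfying (F1)--(F3). By (F3) each subquotient $F_p/F_{p-1}$ is a direct summand of a direct sum of shifts of $A$, hence lies in $\Tria(A)$; by (F2) the inclusion $F_{p-1}\subseteq F_p$ is split in every degree, so $F_{p-1}\to F_p\to F_p/F_{p-1}$ is a degree-wise split short exact sequence and yields a triangle in $\ch(A)$, whence in $\cd(A)$. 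Induction then gives $F_p\in\Tria(A)$ for all $p$. Finally, because the transition maps are degree-wise split monomorphisms, the honest union $F=\bigcup_p F_p$ agrees with the homotopy colimit, so there is a triangle $\bigoplus_p F_p\to\bigoplus_p F_p\to F\to\Sigma\bigoplus_p F_p$; as $\Tria(A)$ is closed under direct sums and cones, $F\in\Tria(A)$. This proves that $A$ generates, completing (a). I expect this passage from the filtration to the homotopy-colimit triangle to be the main obstacle, since it is the only place where the infinite direct sums and the precise meaning of ``union'' must be reconciled; everything before it is essentially bookkeeping.

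Part (b) is then formal. By (a), $\{A\}$ is a set of compact generators, so Theorem~\ref{thm:compact-objects} identifies the compact objects of $\cd(A)$ with $\thick_{\cd(A)}(A)=\per(A)$; this is the first assertion. For the second, I would invoke the characterization recorded just after Theorem~\ref{thm:compact-objects}: a set of compact objects is a set of compact generators precisely when its thick closure equals $\thick_{\cd(A)}(A)$. Thus a dg module $X$ is a compact generator iff $X$ is compact and $\thick(X)=\per(A)$, i.e.\ (since compactness is automatic once $X\in\per(A)$) iff $\thick(X)=\per(A)$, which is exactly the statement that $X$ generates $\per(A)$.
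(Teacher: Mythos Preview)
Your argument is correct. Note, however, that the paper does not supply a proof of this theorem: it is stated with a citation to \cite{Keller94} and used as a black box, so there is no ``paper's own proof'' to compare against. What you have written is a clean reconstruction of the standard argument. The compactness of $A$ via $\Hom_{\cd(A)}(A,\Sigma^n M)\cong H^n(M)$ is exactly right, and your use of the cofibrant filtration from Theorem~\ref{t:existence-of-resolutions} together with the homotopy-colimit triangle is the expected route to $\cd(A)=\Tria(A)$. The deduction of (b) from (a) via Theorem~\ref{thm:compact-objects} and the subsequent characterization of compact generators is also correct and is precisely how the paper intends those preliminary results to be used.

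One small remark on the step you flagged as the main obstacle: the identification of $\bigcup_p F_p$ with the homotopy colimit is indeed the only genuinely delicate point. Your justification (degree-wise split monomorphisms make the honest colimit agree with the homotopy colimit) is correct; alternatively one can observe that for such a filtration the Milnor $\varprojlim^1$ obstruction vanishes, or simply that filtered colimits along degree-wise split inclusions are exact on complexes, so the canonical map from the mapping telescope to the union is a quasi-isomorphism.
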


\begin{lemma}\label{lem:der-equiv-induces-equiv-on-per-and-dfd}
A derived equivalence $\cd(B)\rightarrow\cd(A)$ of dg algebras
restricts to triangle equivalences $\per(B)\rightarrow\per(A)$ and
$\cd_{fd}(B)\rightarrow\cd_{fd}(A)$.
\end{lemma}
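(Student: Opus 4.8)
The plan is to recall what a derived equivalence of dg algebras is and then use the intrinsic characterizations of $\per$ and $\cd_{fd}$ to transport them across the equivalence. A derived equivalence $\Phi\colon\cd(B)\to\cd(A)$ is a triangle equivalence (typically realized as $?\lten_B T$ for a suitable complex of bimodules $T$, or abstractly as any triangle equivalence commuting with infinite direct sums). The key point is that both subcategories $\per$ and $\cd_{fd}$ are defined by properties that are preserved under any triangle equivalence commuting with coproducts, so that $\Phi$ automatically restricts to them.

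First I would treat $\per$. By Theorem~\ref{t:compact-generator}(b), an object of $\cd(B)$ lies in $\per(B)$ if and only if it is compact. Since a triangle equivalence commuting with direct sums carries compact objects to compact objects (the defining property that $\Hom(X,?)$ commutes with coproducts is manifestly preserved by such a functor), $\Phi$ sends $\per(B)$ into $\per(A)$ and its quasi-inverse sends $\per(A)$ into $\per(B)$. Hence $\Phi$ restricts to a triangle equivalence $\per(B)\to\per(A)$. Alternatively, and more concretely, one notes $\Phi(B)\cong\Phi(B)$ is a compact generator of $\cd(A)$, so by Theorem~\ref{thm:compact-objects} the compact objects of $\cd(A)$ are exactly $\thick_{\cd(A)}(\Phi(B))=\Phi(\thick_{\cd(B)}(B))=\Phi(\per(B))$, which equals $\per(A)=\thick_{\cd(A)}(A)$ because any two compact generators generate the same thick subcategory.

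Next I would treat $\cd_{fd}$. Here the issue is that finite-dimensionality of total cohomology is not obviously preserved by an abstract equivalence, since $\Phi$ need not commute with the cohomology functors. The standard fix is to characterize $\cd_{fd}(A)$ intrinsically in terms of $\per(A)$: an object $M\in\cd(A)$ lies in $\cd_{fd}(A)$ if and only if $\bigoplus_{n\in\mathbb{Z}}\Hom_{\cd(A)}(P,\Sigma^n M)$ is finite-dimensional for every $P\in\per(A)$ (equivalently, for $P=A$, using that $A$ generates $\per(A)$ and that the condition is stable under the thick closure operations). Since $\Hom_{\cd(A)}(\Phi X,\Sigma^n\Phi Y)\cong\Hom_{\cd(B)}(X,\Sigma^n Y)$ and $\Phi$ already matches up $\per(B)$ with $\per(A)$ by the previous step, this finiteness condition is preserved by $\Phi$ in both directions. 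Therefore $\Phi$ restricts to a triangle equivalence $\cd_{fd}(B)\to\cd_{fd}(A)$.

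The main obstacle is the second step: verifying that the intrinsic $\Hom$-finiteness condition really characterizes $\cd_{fd}$. For this one uses that, after fixing an $\ch$-projective resolution, $H^n\RHom_A(A,M)\cong H^n(M)$, so testing against $P=A$ recovers the total cohomology of $M$; finite-dimensionality of $\bigoplus_n H^n(M)$ is then equivalent to the stated finiteness of graded $\Hom$ from $A$. One must check the two implications carefully and confirm that the class of objects satisfying the finiteness condition is a thick subcategory (closed under shifts, cones, and summands), so that it suffices to test against the single generator $A$ rather than all of $\per(A)$. Granting this intrinsic description, the transport along $\Phi$ is formal.
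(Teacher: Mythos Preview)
The paper does not supply a proof of this lemma; it is stated as a standard fact without argument. Your proposal is correct and is exactly the conventional justification: characterize $\per$ intrinsically as the compact objects (Theorem~\ref{t:compact-generator}(b)), note that any triangle equivalence preserves compactness (an equivalence has its quasi-inverse as both adjoints, hence commutes with all coproducts), and characterize $\cd_{fd}$ intrinsically via $\Hom_{\cd(A)}(A,\Sigma^n M)\cong H^n(M)$, then transfer both descriptions along $\Phi$. One minor remark: your closing paragraph frames the thick-subcategory argument as being about the objects $M$, but what you actually need (and what you implicitly use) is that the set of test objects $P$ for which $\bigoplus_n\Hom(P,\Sigma^n M)$ is finite-dimensional is a thick subcategory of $\cd(A)$ containing $A$, hence contains all of $\per(A)$; you might state this more carefully.
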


The following well-known result is a consequence of~\cite[Lemma 6.1
(a)]{Keller94}.

\begin{proposition}\label{p:derived-equiv}
\begin{itemize}
\item[(a)] Let $M$ be a compact generator of $\cd(A)$ and assume that $M$
is $\ch$-projective. Let $B=\cEnd_A(M)=\cHom_A(M,M)$ be the dg
endomorphism algebra of $M$. Then the derived functor $?\lten_B
M:\cd(B)\rightarrow\cd(A)$ is a triangle equivalence.
\item[(b)] For a
quasi-isomorphism of dg algebras $A\rightarrow C$, the induction
functor $?\lten_A C:\cd(A)\rightarrow\cd(C)$ is a triangle equivalence.
\end{itemize}
\end{proposition}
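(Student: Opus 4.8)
The plan is to deduce both parts from a single Morita-type recognition principle for compactly generated triangulated categories: a triangle functor $F$ between such categories which preserves arbitrary direct sums, sends a chosen compact generator $P$ of the source to a compact object, and induces isomorphisms $\Hom(P,\Sigma^n P)\to\Hom(FP,\Sigma^n FP)$ for all $n$, is automatically fully faithful; if in addition $FP$ generates the target, then $F$ is a triangle equivalence. Both functors in question, $?\lten_B M:\cd(B)\to\cd(A)$ and $?\lten_A C:\cd(A)\to\cd(C)$, are derived tensor products with a fixed dg bimodule (for (a), $M$ carries the $B$-$A$-bimodule structure with $B=\cEnd_A(M)$ acting on the left by evaluation), hence are triangle functors commuting with all direct sums. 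So I would reduce the whole statement to checking the three hypotheses of the principle, drawing the compact generators $B\in\cd(B)$ and $A\in\cd(A)$ from Theorem~\ref{t:compact-generator}(a) and using the characterisation of compact generators recalled after Theorem~\ref{thm:compact-objects}.

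For (a) I would note that $?\lten_B M$ sends the free module $B$ to $B\lten_B M\cong M$ (no derived correction, since $B$ is $\ch$-projective over itself), and $M$ is compact in $\cd(A)$ by hypothesis. For the graded-Hom condition, $\Hom_{\cd(B)}(B,\Sigma^n B)=H^n B=H^n\cEnd_A(M)=H^n\cHom_A(M,M)$, while $\ch$-projectivity of $M$ (Theorem~\ref{t:existence-of-resolutions}) gives $\Hom_{\cd(A)}(M,\Sigma^n M)=H^n\cHom_A(M,M)$; one checks the map induced by the functor is exactly this identification. Since $M$ is assumed to \emph{generate} $\cd(A)$, the principle yields an equivalence. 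For (b) I would argue directly with the generator $A$: the functor $?\lten_A C$ sends $A$ to $A\lten_A C\cong C$, the free rank-one $C$-module, which is a compact generator of $\cd(C)$ by Theorem~\ref{t:compact-generator}(a); and on $\Hom_{\cd(A)}(A,\Sigma^n A)=H^n A\to H^n C=\Hom_{\cd(C)}(C,\Sigma^n C)$ the induced map is the one coming from the dg algebra morphism $A\to C$, which is an isomorphism because $A\to C$ is a quasi-isomorphism. Arguing with the generator $A$ is cleaner than viewing (b) as a special case of (a), since $C$ need not be $\ch$-projective as an $A$-module.

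The recognition principle itself I would prove by a two-step d\'evissage. First fix the first argument at $P$ and consider the full subcategory of objects $Y$ for which $\Hom(P,\Sigma^n Y)\to\Hom(FP,\Sigma^n FY)$ is an isomorphism for all $n$; it is triangulated, closed under summands, and closed under direct sums (using that $F$ preserves direct sums and that both $P$ and $FP$ are compact), and it contains $P$, so by Theorem~\ref{t:compact-generator}(a) it is the whole source. Then fix the second argument at an arbitrary $Y$ and run the symmetric argument, where closure of the relevant subcategory under direct sums now needs only that $F$ preserves them; this gives full faithfulness. Essential surjectivity follows because the essential image is a strictly full triangulated subcategory of the target closed under direct sums (as $F$ preserves them and is fully faithful) and containing the generator $FP$, hence is everything.

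The main obstacle is not conceptual but lies in the bookkeeping of the second hypothesis: verifying that the map $\Hom_{\cd(B)}(B,\Sigma^n B)\to\Hom_{\cd(A)}(M,\Sigma^n M)$ induced by the functor really is the tautological identification of both sides with $H^n\cHom_A(M,M)$, and not merely some abstract isomorphism. This is exactly where the hypothesis that $M$ be $\ch$-projective is indispensable, for it guarantees that the naive morphism complex $\cHom_A(M,M)$ already computes the derived morphisms in $\cd(A)$; without it one would have to resolve $M$ and compare, destroying the clean identification of $B$ with the dg endomorphism algebra. All of this is contained in~\cite[Lemma 6.1(a)]{Keller94}, from which the proposition follows.
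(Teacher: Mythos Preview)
Your proposal is correct and follows the same approach as the paper: the paper gives no explicit proof, stating only that the proposition ``is a consequence of~\cite[Lemma 6.1 (a)]{Keller94}'', which is precisely the recognition principle you unpack and apply. Your elaboration of the d\'evissage argument and the verification of the hypotheses in each case is accurate and simply makes explicit what the paper leaves to the citation.
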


\begin{proposition} Let $T\in\cd(A)$ be a tilting object, \ie a compact generator such that $\Hom_{\cd(A)}(T,\Sigma^n T)=0$ for $n\neq 0$. Then $\cd(A)$ and $\cd(\End_{\cd(A)}(T))$ are triangle equivalent.
\end{proposition}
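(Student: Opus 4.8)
The plan is to realize $\End_{\cd(A)}(T)$ as the zeroth cohomology of a dg endomorphism algebra whose derived category already coincides with $\cd(A)$, and then to pass from that dg algebra to its cohomology along a chain of quasi-isomorphisms, invoking the invariance recorded in Proposition~\ref{p:derived-equiv}(b) at each stage.

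First I would replace $T$ by an $\ch$-projective resolution $\mathrm{p}T\to T$, which exists by Theorem~\ref{t:existence-of-resolutions}. Since $\mathrm{p}T\cong T$ in $\cd(A)$, the object $\mathrm{p}T$ is again a compact generator of $\cd(A)$, now $\ch$-projective. Set $B':=\cEnd_A(\mathrm{p}T)=\cHom_A(\mathrm{p}T,\mathrm{p}T)$, the dg endomorphism algebra. By Proposition~\ref{p:derived-equiv}(a) the derived functor $?\lten_{B'}\mathrm{p}T:\cd(B')\to\cd(A)$ is a triangle equivalence, so it remains to identify $\cd(B')$ with $\cd(B)$, where $B:=\End_{\cd(A)}(T)$.

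Second, I would compute the cohomology of $B'$. For each $n\in\Z$,
\[
H^n(B')=\Hom_{\ch(A)}(\mathrm{p}T,\mathrm{p}T[n])=\Hom_{\cd(A)}(\mathrm{p}T,\Sigma^n\mathrm{p}T)=\Hom_{\cd(A)}(T,\Sigma^n T),
\]
where the middle equality uses that $\mathrm{p}T$ is $\ch$-projective, so the quotient functor $\ch(A)\to\cd(A)$ is bijective on morphisms out of $\mathrm{p}T$. By the tilting hypothesis this vanishes for $n\neq 0$ and equals $B$ for $n=0$; thus $B'$ is a dg algebra whose cohomology is concentrated in degree $0$, with $H^0(B')=B$. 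Then I would connect $B'$ to $B$ by a zigzag of quasi-isomorphisms of dg algebras. The good truncation $\tau_{\leq 0}B'$ (equal to $B'$ in negative degrees, to the degree-$0$ cocycles in degree $0$, and zero in positive degrees) is a dg subalgebra, and the inclusion $\tau_{\leq 0}B'\hookrightarrow B'$ is a quasi-isomorphism because $H^n(B')=0$ for $n>0$. The projection $\tau_{\leq 0}B'\to H^0(B')=B$ is a surjective dg algebra homomorphism whose kernel (the negative-degree part together with the degree-$0$ coboundaries) is a dg ideal, and it is a quasi-isomorphism because $H^n(B')=0$ for $n<0$. Applying Proposition~\ref{p:derived-equiv}(b) to $B'\xleftarrow{\sim}\tau_{\leq 0}B'\xrightarrow{\sim}B$ gives $\cd(B')\simeq\cd(\tau_{\leq 0}B')\simeq\cd(B)$, and composing with the first step yields $\cd(A)\simeq\cd(\End_{\cd(A)}(T))$.

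The main obstacle is this last step: verifying that the dg endomorphism algebra, which a priori only has cohomology concentrated in degree $0$, is genuinely quasi-isomorphic to the ordinary algebra $B=H^0(B')$ \emph{through morphisms of dg algebras} rather than merely of complexes. Concretely this is the check that $\tau_{\leq 0}B'$ is closed under multiplication and that the projection onto $H^0(B')$ respects products with dg-ideal kernel; conceptually it reflects the fact that a dg algebra concentrated in a single cohomological degree is formal, any higher $A_\infty$-operation $m_n$ with $n\geq 3$ vanishing for degree reasons. Everything else is a formal consequence of the resolution theorem and Proposition~\ref{p:derived-equiv}.
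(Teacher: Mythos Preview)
Your argument is correct. The paper does not supply an explicit proof of this proposition; it is stated immediately after Proposition~\ref{p:derived-equiv} as a consequence, and your derivation via the $\ch$-projective resolution, the dg endomorphism algebra $B'=\cEnd_A(\mathrm{p}T)$, and the truncation zigzag $B'\leftarrow\tau_{\leq 0}B'\to H^0(B')$ is precisely the standard way to extract it from Proposition~\ref{p:derived-equiv}(a) and~(b).
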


\subsection{Formal dg algebras}

Let $A$ be a dg algebra. Then its total cohomology
$H^*A=\bigoplus_{n\in\mathbb{Z}}H^nA$ admits the induced structure
of a graded algebra. We view it as a dg algebra with trivial
differential. The dg algebra $A$ is said to be \emph{formal} if it is
related by a zigzag of quasi-isomorphisms to $H^*A$.

\begin{lemma}\label{l:formal-dg-alg} Let $A$ be a formal dg algebra. Then $\cd(A)$ and $\cd(H^*A)$ are triangle equivalent.
\end{lemma}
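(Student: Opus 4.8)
The plan is to reduce the statement directly to Proposition~\ref{p:derived-equiv}(b). By the definition of formality, there is a finite zigzag of dg algebras
\[
A = A_0 \stackrel{f_1}{\longleftarrow} A_1 \stackrel{f_2}{\longrightarrow} A_2 \longleftarrow \cdots \longrightarrow A_n = H^*A
\]
in which every $f_i$ is a quasi-isomorphism (the arrows are allowed to point in either direction). First I would apply Proposition~\ref{p:derived-equiv}(b) to each $f_i$: a quasi-isomorphism $A_{i-1}\rightarrow A_i$ makes $A_i$ into a dg $A_{i-1}$-module, and the induction functor $?\lten_{A_{i-1}}A_i:\cd(A_{i-1})\rightarrow\cd(A_i)$ is a triangle equivalence; symmetrically for a quasi-isomorphism pointing the other way.

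This produces a chain of triangle equivalences
\[
\cd(A) = \cd(A_0) \;\simeq\; \cd(A_1) \;\simeq\; \cdots \;\simeq\; \cd(A_n) = \cd(H^*A).
\]
For an arrow pointing in the convenient direction I take the induction functor itself; for an arrow pointing the other way I take a quasi-inverse of the induction functor, which exists and is again a triangle equivalence, since any triangle equivalence admits a triangle quasi-inverse. Composing these functors along the zigzag yields the desired triangle equivalence $\cd(A)\rightarrow\cd(H^*A)$.

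I do not expect a genuine obstacle here: essentially all the content is carried by Proposition~\ref{p:derived-equiv}(b), and the remaining steps — that a composite of triangle equivalences is a triangle equivalence, and that a triangle equivalence may be inverted up to natural isomorphism — are formal. The only point deserving a word of care is that the zigzag is \emph{finite}, so that the composition is well defined; this is guaranteed by the definition of formality.
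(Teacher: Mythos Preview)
Your argument is correct and matches the paper's own proof, which simply records that the lemma is a consequence of Proposition~\ref{p:derived-equiv}(b); you have merely spelled out the zigzag composition in detail.
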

\begin{proof} This is a consequence of
Proposition~\ref{p:derived-equiv} (b).
\end{proof}

\section{Orbit categories}\label{s:orbit-category}

In this section we study orbit categories. We define large orbit categories for categories with infinite (set-indexed) direct sums with an auto-equivalence, establish the universal property and compare compact objects of a category and its orbit category.

\subsection{The orbit category}\label{ss:orbit-category}
Let $\ca$ be a category and $\Phi:\ca\rightarrow\ca$ be an
auto-equivalence. We define the \emph{orbit category} $\ca/\Phi$
(\cite{Keller05}): it has the same objects as $\ca$, and the
morphism space from $X$ to $Y$ is defined by
\[\Hom_{\ca/\Phi}(X,Y)
=\bigoplus_{p\in\mathbb{Z}}\Hom_{\ca}(X,\Phi^p Y).\] Let $\pi:\ca\to\ca/\Phi$ denote the projection functor. If a functor $F:\ca\to\cb$ satisfies the condition $F\circ\Phi\cong F$, then there is a functor $\bar{F}:\ca/\Phi\to \cb$ such that $F=\bar{F}\circ\pi$.

Assume the following three conditions:
\begin{itemize}
\item[(1)] $\ca$ is idempotent complete,
\item[(2)] indecomposable objects in $\ca$ have local endomorphism algebras,
\item[(3)] any indecomposable object $X$ is not isomorphic in $\ca$ to $\Phi^p X$ for any $p\neq 0$.
\end{itemize}
It follows that $\cb$ is also idempotent complete and an object is indecomposable in $\ca$ if and only if it is indecomposable in $\ca/\Phi$. Moreover an indecomposable object in $\ca/\Phi$ also has local endomorphism algebra.

One checks that for two indecomposable objects $X$ and $Y$
\begin{itemize}
\item[(a)] $X$ and $Y$ are isomorphic in $\ca/\Phi$ if and only if  $X$ is isomorphic in $\ca$ to $\Phi^p Y$ for some $p\in\mathbb{Z}$,
\item[(b)] $\rad_{\ca/\Phi}(X,Y)=\bigoplus_{p\in\mathbb{Z}}\rad_\ca(X,\Phi^p Y)$, and $\rad^2_{\ca/\Phi}(X,Y)=\bigoplus_{p\in\mathbb{Z}}\rad^2_\ca(X,\Phi^pY)$.
\end{itemize}
The auto-equivalence $\Phi$ on $\ca$ induces an automorphism $\varphi$ on the Gabriel quiver $\Gamma_\ca$ of $\ca$. It follows from (a) and (b) above that the Gabriel quiver $\Gamma_{\ca/\Phi}$ of $\ca/\Phi$ is exactly the orbit quiver of $\Gamma_\ca$ with respect to $\varphi$.

\smallskip

Assume that $\ca$ is triangulated and $\Phi$ is a triangle equivalence. The orbit category $\ca/\Phi$ is in general not triangulated. We say that a triangulated category $\cb$ is a \emph{triangulated hull} of $\ca/\Phi$ if there is a triangle functor $F:\ca\to\cb$ such that $F\circ\Phi\cong F$, the induced functor $\bar{F}:\ca/\Phi\to \cb$ is fully faithful and $\cb=\thick(\im \bar{F})$. This definition is weaker than the one in \cite{Keller05}.

Assume now that $\ca$ is a triangulated category and $\Phi$ is a triangle equivalence, that both $\ca$ and $\ca/\Phi$ are Hom-finite Krull--Schmidt and that $\ca/\Phi$ is triangulated such that $\pi:\ca\to\ca/\Phi$ is a triangle functor. If $\ca$ has Auslander--Reiten triangles, then so does $\ca/\Phi$ and $\pi$ preserves Auslander--Reiten triangles.

\subsection{The large orbit category}\label{ss:large-orbit-category}

Let $\ca$ be a category with infinite direct sums and $\Phi:\ca\rightarrow\ca$ be an
auto-equivalence. We define the \emph{large orbit category} $\ca\dslash\Phi$: it has the same objects as $\ca$, and the
morphism space from $X$ to $Y$ is defined by
\[\Hom_{\ca\dslash\Phi}(X,Y)
=\Hom_{\ca}(X,\bigoplus_{p\in\mathbb{Z}}\Phi^p Y).\]
Let $f\in\Hom_{\ca\dslash\Phi}(X,Y)$ and $g\in\Hom_{\ca\dslash\Phi}(Y,Z)$. For each $p\in\mathbb{Z}$, we have a morphism $\Phi^p(g):\Phi^p Y\to\Phi^p (\bigoplus_{q\in\mathbb{Z}}\Phi^q Z)=\bigoplus_{q\in\mathbb{Z}}\Phi^q Z$. Let $\tilde{g}\in\Hom_\ca(\bigoplus_{p\in\mathbb{Z}}\Phi^p Y,\bigoplus_{q\in\mathbb{Z}}\Phi^q Z)=\prod_{p\in\mathbb{Z}}\Hom_\ca(\Phi^p Y,\bigoplus_{q\in\mathbb{Z}}\Phi^q Z)$ be the morphism whose $p$-th component is $\Phi^p(g)$. The composition $g\circ f$ of $g$ with $f$ in $\ca\dslash\Phi$ is the composition $\tilde{g}\circ f$ in $\ca$.

\subsection{The universal property}\label{ss:universal-property}
Let $\ca$ be a category with infinite direct sums and $\Phi:\ca\rightarrow\ca$ be an
auto-equivalence.
Let $\pi:\ca\rightarrow\ca\dslash\Phi$ denote the canonical projection functor. It satisfies the following universal property.

\begin{lemma}\label{lem:universal-property}
Let $\cb$ be a category with infinite direct sums. If $F:\ca\rightarrow\cb$ is a functor which commutes with all direct sums such that $F\circ\Phi\cong F$, then there is a functor $\bar{F}:\ca\dslash\Phi\rightarrow\cb$ such that $F=\bar{F}\circ \pi$.
\end{lemma}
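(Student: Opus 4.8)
The plan is to construct $\bar F$ explicitly from a fixed natural isomorphism witnessing $F\circ\Phi\cong F$, and then to verify functoriality and the factorization $F=\bar F\circ\pi$ directly, the composition axiom being the only nontrivial point. Fix a natural isomorphism $\eta:F\circ\Phi\iso F$. For each $n\in\Z$ and each object $X$ I would define an isomorphism $\theta_n^X:F(\Phi^n X)\iso F(X)$ by iterating $\eta$ (and $\eta^{-1}$ for $n<0$): for $n\geq 0$ set $\theta_n^X=\eta_X\circ\eta_{\Phi X}\circ\cdots\circ\eta_{\Phi^{n-1}X}$, with $\theta_0^X=\id$. Two properties are immediate from the naturality of $\eta$: first, each $\theta_n$ is natural in $X$, so that $\theta_n^V\circ F(\Phi^n h)=F(h)\circ\theta_n^U$ for every $h\colon U\to V$ in $\ca$; and second, they satisfy the cocycle identity $\theta_{m+n}^X=\theta_n^X\circ\theta_m^{\Phi^n X}$.

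With these in hand I would let $\bar F$ agree with $F$ on objects, and for a morphism $f\in\Hom_{\ca\dslash\Phi}(X,Y)=\Hom_\ca(X,\bigoplus_{p\in\Z}\Phi^p Y)$ define $\bar F(f)$ to be the composite
\[
F(X)\xrightarrow{F(f)}F\Big(\bigoplus_{p\in\Z}\Phi^p Y\Big)\cong\bigoplus_{p\in\Z}F(\Phi^p Y)\xrightarrow{(\theta_p^Y)_p}F(Y),
\]
where the middle isomorphism uses that $F$ commutes with direct sums and the last arrow is the codiagonal map whose restriction to the $p$-th summand is $\theta_p^Y$. This is well defined with no choices to check, since $f$ is literally a morphism of $\ca$. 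The factorization is then immediate: the image $\pi(g)$ of $g\colon X\to Y$ is the inclusion of $g$ into the $p=0$ summand, so $\bar F(\pi(g))=\theta_0^Y\circ F(g)=F(g)$; taking $g=\id_X$ shows in particular that $\bar F$ preserves identities, as $\id_X$ in $\ca\dslash\Phi$ is exactly $\pi(\id_X)$.

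The crux is preservation of composition. Given $f\colon X\to\bigoplus_p\Phi^p Y$ and $g\colon Y\to\bigoplus_q\Phi^q Z$, recall that $g\circ f$ in $\ca\dslash\Phi$ equals $\tilde g\circ f$ in $\ca$, where $\tilde g\colon\bigoplus_p\Phi^p Y\to\bigoplus_q\Phi^q Z$ restricts on the $p$-th summand to $\Phi^p(g)\colon\Phi^p Y\to\bigoplus_q\Phi^{p+q}Z$. Expanding $\bar F(g\circ f)$ and $\bar F(g)\circ\bar F(f)$ via the definition above and applying $F$, the desired equality reduces, summand by summand over $F(\Phi^p Y)$, to the naturality identity $\theta_p^W\circ F(\Phi^p g)=F(g)\circ\theta_p^Y$ for $W=\bigoplus_q\Phi^q Z$, combined with the cocycle identity $\theta_{p+q}^Z=\theta_q^Z\circ\theta_p^{\Phi^q Z}$. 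I expect this bookkeeping to be the only delicate step: one must track the index shift $q\mapsto p+q$ produced by $\tilde g$ and see that it is reconciled precisely by the cocycle relation, while the naturality of $\theta_p$ absorbs the interior map $g$. Everything else is formal, and since the construction depends only on the fixed $\eta$, the resulting $\bar F$ is determined up to natural isomorphism.
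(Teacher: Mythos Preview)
Your construction is exactly the paper's: fix a natural isomorphism $\eta\colon F\circ\Phi\to F$, build the iterates $\theta_p$ (the paper calls them $\mu_p$), and define $\bar F$ on a morphism $f$ as the composite $F(X)\xrightarrow{F(f)}\bigoplus_p F(\Phi^p Y)\xrightarrow{(\theta_p^Y)}F(Y)$. The paper stops there without checking functoriality, whereas you go further and verify it via the naturality of $\theta_p$ together with the cocycle identity $\theta_{p+q}^Z=\theta_q^Z\circ\theta_p^{\Phi^q Z}$; this extra bookkeeping is correct and is a genuine improvement over the paper's presentation rather than a different approach.
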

\begin{proof}
For $X\in\ca$, define $\bar{F}(X)=F(X)$. Let $\mu: F\circ\Phi\to F$ be the natural isomorphism.
For $f\in\Hom_{\ca\dslash\Phi}(X,Y)=\Hom_\ca(X,\bigoplus_{p\in\mathbb{Z}}\Phi^p Y)$, define $\bar{F}(f)$ as the following composition:
\[
\xymatrix@R=0.8pc@C=2pc{
F(X)\ar[r]^(0.25){F(f)} & F(\bigoplus_{p\in\mathbb{Z}}\Phi^p Y)=\bigoplus_{p\in\mathbb{Z}}F(\Phi^p Y)\ar[r]^(0.65){\oplus_p \mu_p}&\bigoplus_{p\in\mathbb{Z}} F(Y) \ar[r]^(0.55){\mathrm{diag}} & F(Y),
}
\]
where $\mu_p$ is the composition
\[
\xymatrix{
F(\Phi^{p}Y)\ar[r]^{\mu_{\Phi^{p-1}Y}}
 & F(\Phi^{p-1}Y)
 \ar[r] &\ldots\ar[r] & F\Phi (Y)\ar[r]^{\mu_Y} & F(Y).
}
\]
\end{proof}

We say that a functor $F:\ca\to\cb$ commuting with all direct sums is \emph{$\Phi$-orbitally fully faithful} if $F\circ\Phi\cong F$ and the induced functor $\bar{F}:\ca\dslash\Phi\to \cb$ is fully faithful.

\subsection{Compact objects}\label{ss:compact-objects-for-orbit-cat}
Let $\ca$ be a category with infinite direct sums and $\Phi:\ca\rightarrow\ca$ be an
auto-equivalence.
It is easy to see that $\ca\dslash\Phi$ has infinite direct sums and the projection functor $\pi:\ca\to\ca\dslash\Phi$ commutes with all direct sums.

\begin{lemma}\label{lem:compact-objects-in-orbit-cat}
Let $X$ be an object of $\ca$. Then $X$ is compact in $\ca\dslash\Phi$ if and only if $X$ is compact in $\ca$.
\end{lemma}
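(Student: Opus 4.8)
The plan is to unwind both notions of compactness into statements about the functor $\Hom_\ca(X,-)$ computed in $\ca$ and to compare them. First I would record that coproducts in $\ca\dslash\Phi$ are computed as in $\ca$: since $\Phi$ is an auto-equivalence it preserves all existing direct sums, so for a family $\{Y_i\}_{i\in I}$ one checks
\[
\Hom_{\ca\dslash\Phi}(\textstyle\bigoplus_i Y_i, Z)=\Hom_\ca(\textstyle\bigoplus_i Y_i,\textstyle\bigoplus_p\Phi^p Z)=\prod_i\Hom_{\ca\dslash\Phi}(Y_i,Z),
\]
so that $\bigoplus_i Y_i$ (formed in $\ca$) is the coproduct in $\ca\dslash\Phi$ and $\pi$ commutes with direct sums, as already noted before the statement. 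Using $\Phi^p(\bigoplus_i Y_i)\cong\bigoplus_i\Phi^p Y_i$, the defining formula for the orbit $\Hom$ gives $\Hom_{\ca\dslash\Phi}(X,\bigoplus_i Y_i)\cong\Hom_\ca(X,\bigoplus_{i,p}\Phi^pY_i)$. Thus $X$ is compact in $\ca\dslash\Phi$ if and only if the canonical map
\[
\bigoplus_i\Hom_\ca(X,\textstyle\bigoplus_p\Phi^pY_i)\longrightarrow\Hom_\ca(X,\textstyle\bigoplus_{i,p}\Phi^pY_i)
\]
is an isomorphism for every family $\{Y_i\}_{i\in I}$ in $\ca$.

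For the forward direction (compact in $\ca$ $\Rightarrow$ compact in $\ca\dslash\Phi$) I would simply apply compactness of $X$ in $\ca$ twice: it turns the left-hand side above into $\bigoplus_i\bigoplus_p\Hom_\ca(X,\Phi^pY_i)$ and the right-hand side into $\bigoplus_{i,p}\Hom_\ca(X,\Phi^pY_i)$, and one checks that the canonical map respects these identifications. Hence it is an isomorphism and $X$ is compact in $\ca\dslash\Phi$.

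The converse is the real point. Here orbit-compactness only yields information about the full orbit $\bigoplus_{i,p}\Phi^pY_i$, whereas I need to control an arbitrary direct sum $\bigoplus_j Z_j$ taken in $\ca$. The idea is to exploit that, at $p=0$ (where $\Phi^0=\id$), the object $\bigoplus_j Z_j$ sits inside $\bigoplus_{j,p}\Phi^pZ_j$ as a split subobject, the splitting being the projection onto the $p=0$ components. Concretely, given $f\colon X\to\bigoplus_j Z_j$ in $\ca$, I compose with the $p=0$ inclusion to obtain $\hat f\colon X\to\bigoplus_{j,p}\Phi^pZ_j$; applying orbit-compactness to the family $\{Z_j\}_{j\in J}$ produces a finite $F\subseteq J$ through which $\hat f$ factors, i.e. $\hat f=\iota_F\circ g$ with $\iota_F$ the inclusion of $\bigoplus_{j\in F,\,p}\Phi^pZ_j$. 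Post-composing with the projection $\mathrm{pr}_0$ onto the $p=0$ part gives $f=\mathrm{pr}_0\circ\hat f=(\mathrm{pr}_0\circ\iota_F)\circ g$, and $\mathrm{pr}_0\circ\iota_F$ factors through $\bigoplus_{j\in F}Z_j$, so $f$ does too. Since injectivity of the canonical map $\bigoplus_j\Hom_\ca(X,Z_j)\to\Hom_\ca(X,\bigoplus_j Z_j)$ is automatic in an additive category, this factorization supplies surjectivity, whence $X$ is compact in $\ca$.

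The main obstacle is precisely this converse: the orbit category identifies an object with all its $\Phi$-translates, so one cannot read off compactness in $\ca$ directly from compactness in $\ca\dslash\Phi$. The retraction trick via the $p=0$ summand is what separates $\bigoplus_j Z_j$ from the remaining translates and transports the finite factorization back to $\ca$. The points requiring care are that $\Phi^0$ may be taken to be the identity functor and that the split inclusion and projection at $p=0$ are compatible with the canonical comparison maps for compactness.
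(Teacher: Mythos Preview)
Your proof is correct and takes essentially the same approach as the paper. For the forward direction the paper pulls out only the $I$-indexed sum (one application of compactness) rather than both sums, but this is a cosmetic difference; for the converse both arguments rest on the same retraction trick via the split inclusion of the $p=0$ summand, with your version spelling out explicitly the finite factorization that the paper compresses into the assertion that the image of the injection is identified with $\Hom_\ca(X,\bigoplus_i Y_i)$.
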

\begin{proof}Let $\{Y_i|i\in I\}$ be a set of objects  of $\ca$.

Assume that $X$ is compact in $\ca$. We have
\begin{align*}
\Hom_{\ca\dslash\Phi}(X,\bigoplus_{i\in I} Y_i)&=\Hom_{\ca}(X,\bigoplus_{p\in\mathbb{Z}}\Phi^p\bigoplus_{i\in I}Y_i)\\
&=\Hom_{\ca}(X,\bigoplus_{i\in I}\bigoplus_{p\in\mathbb{Z}}\Phi^p Y_i)\\
&=\bigoplus_{i\in I}\Hom_{\ca}(X,\bigoplus_{p\in\mathbb{Z}}\Phi^p Y_i)\\
&=\bigoplus_{i\in I}\Hom_{\ca\dslash\Phi}(X,Y_i),
\end{align*}
showing that $X$ is compact in $\ca\dslash\Phi$.

Assume that $X$ is compact in $\ca\dslash\Phi$. We have an injective map
\begin{align*}
\bigoplus_{i\in I}\Hom_\ca(X,Y_i)&\hookrightarrow \bigoplus_{i\in I}\Hom_\ca(X,\bigoplus_{p\in\mathbb{Z}}\Phi^p Y_i)\\
&=\bigoplus_{i\in I} \Hom_{\ca\dslash\Phi}(X,Y_i)\\
&=\Hom_{\ca\dslash\Phi}(X,\bigoplus_{i\in I}Y_i)\\
&=\Hom_\ca(X,\bigoplus_{p\in\mathbb{Z}}\Phi^p \bigoplus_{i\in I}Y_i),
\end{align*}
whose image is identified with $\Hom_\ca(X,\bigoplus_{i\in I}Y_i)$. This shows that $X$ is compact in $\ca$.
\end{proof}

Let $\ca^c$ be the full subcategory of $\ca$ consisting of compact objects. Then $\Phi$ restricts to an auto-equivalence $\phi:\ca^c\to\ca^c$.  The embedding $\ca^c\to \ca$ induces a fully faithful functor $\ca^c/\phi\to\ca\dslash\Phi$, which, by the preceding lemma, identifies $\ca^c/\phi$ with $(\ca\dslash\Phi)^c$.

\subsection{Orbit categories of triangulated categories}

Let $\cc$ and $\cd$ be triangulated categories with infinite direct sums and let $\Phi:\cc\to\cc$ be a triangle equivalence of $\cc$.

\begin{lemma}\label{lem:compact-generator-of-orbit-category}
Let $F:\cc\to\cd$ be a $\Phi$-orbitally fully faithful triangle functor.
Assume that $\cc$ has a set of compact generators $\{\Phi^pT|p\in\mathbb{Z}\}$ and that $F(T)$ is a compact generator in $\cd$. Let $X$ be an object of $\cc$. Then $\{\Phi^pX|p\in\mathbb{Z}\}$ is a set of compact generators of $\cc$ if and only if $F(X)$ is a compact generator of $\cd$.
\end{lemma}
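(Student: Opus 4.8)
The plan is to prove both implications with the help of the generation criterion recalled after Theorem~\ref{thm:compact-objects}: since $\{\Phi^pT\mid p\in\mathbb{Z}\}$ is already a set of compact generators of $\cc$ and $F(T)$ is a compact generator of $\cd$, a set of compact objects generates precisely when its thick closure agrees with that of the reference generators, equivalently when right-orthogonality to it forces the zero object. The other main tool is the dictionary supplied by full faithfulness of $\bar F$: since $\bar F\circ\pi=F$ and $\pi$ is the identity on objects, for all $X,Y\in\cc$ one has
\[
\Hom_\cd(F(X),F(Y))=\Hom_{\cc\dslash\Phi}(X,Y)=\Hom_\cc\Bigl(X,\bigoplus_{p\in\mathbb{Z}}\Phi^pY\Bigr).
\]
Throughout I would keep the issues of \emph{compactness} and \emph{generation} separate.

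For the implication ``$\{\Phi^pX\}$ generates $\Rightarrow F(X)$ generates'' I would argue purely with thick subcategories. By Theorem~\ref{thm:compact-objects} applied to $\cc$, the hypothesis gives $\thick_\cc(\Phi^pX\mid p)=\cc^c=\thick_\cc(\Phi^pT\mid p)$, so in particular $X\in\cc^c$ and $T\in\thick_\cc(\Phi^pX\mid p)$. Applying the triangle functor $F$ and using $F\circ\Phi\cong F$ to collapse $F(\Phi^pX)\cong F(X)$, I obtain $F(X)\in\thick_\cd(F(T))=\cd^c$ (so $F(X)$ is compact) and $F(T)\in\thick_\cd(F(X))$. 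Hence $\thick_\cd(F(X))=\thick_\cd(F(T))=\cd^c$, and the generation criterion in $\cd$ finishes this direction. This half uses neither the full faithfulness of $\bar F$ nor the compatibility of $F$ with direct sums.

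For the converse, assume $F(X)$ is a compact generator of $\cd$. First I would show each $\Phi^pX$ is compact in $\cc$; as $\Phi$ is an auto-equivalence it suffices to treat $X$, and by Lemma~\ref{lem:compact-objects-in-orbit-cat} it is enough to prove $X$ compact in $\cc\dslash\Phi$. Given a family $\{Y_i\}$, the displayed dictionary together with the fact that $F$ commutes with direct sums turns $\Hom_{\cc\dslash\Phi}(X,\bigoplus_iY_i)$ into $\Hom_\cd(F(X),\bigoplus_iF(Y_i))$; compactness of $F(X)$ then splits the sum, and translating back yields $\bigoplus_i\Hom_{\cc\dslash\Phi}(X,Y_i)$, so $X$ is compact. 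For generation I would verify the orthogonality criterion in $\cc$: suppose $Z\in\cc$ satisfies $\Hom_\cc(\Phi^pX,\Sigma^nZ)=0$ for all $p,n$. Using $\Sigma^nF(Z)=F(\Sigma^nZ)$, the dictionary, the compactness of $X$ just proved, and the reindexing $\Hom_\cc(X,\Phi^p\Sigma^nZ)=\Hom_\cc(\Phi^{-p}X,\Sigma^nZ)$ coming from $\Phi$ being an equivalence, I compute
\[
\Hom_\cd(F(X),\Sigma^nF(Z))=\bigoplus_{p\in\mathbb{Z}}\Hom_\cc(\Phi^{p}X,\Sigma^nZ)=0
\]
for every $n$. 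Since $F(X)$ is a compact generator of $\cd$, the criterion in $\cd$ forces $F(Z)\cong 0$.

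The last step, and the only genuinely delicate point, is to deduce $Z\cong 0$ in $\cc$ from $F(Z)\cong 0$ in $\cd$. Here full faithfulness of $\bar F$ is used once more: $\Hom_{\cc\dslash\Phi}(Z,Z)=\Hom_\cd(F(Z),F(Z))=0$, and since $\Hom_\cc(Z,Z)=\Hom_\cc(Z,\Phi^0Z)$ sits as a direct summand of $\Hom_{\cc\dslash\Phi}(Z,Z)$, the identity $\id_Z$ must already vanish in $\cc$, whence $Z\cong 0$. This completes the orthogonality criterion and shows $\{\Phi^pX\}$ generates $\cc$. I expect the main obstacle to be bookkeeping in this converse direction—matching the orbit-category $\Hom$ with the $\cd$-side $\Hom$ and tracking the reindexing by $\Phi$—rather than any conceptual difficulty; the forward direction is essentially formal.
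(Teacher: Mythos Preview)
Your proof is correct and follows essentially the same approach as the paper's: the forward direction via thick subcategories and $F\circ\Phi\cong F$, and the converse via Lemma~\ref{lem:compact-objects-in-orbit-cat} for compactness followed by the right-orthogonality criterion. You spell out two steps more explicitly than the paper does---the transfer of compactness through $\bar F$ using that $F$ commutes with direct sums, and the deduction $Z\cong 0$ in $\cc$ from $F(Z)\cong 0$ via the split inclusion $\Hom_\cc(Z,Z)\hookrightarrow\Hom_{\cc\dslash\Phi}(Z,Z)$---but these are exactly the justifications the paper leaves implicit.
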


\begin{proof}
By Theorem~\ref{thm:compact-objects}, $\cc^c=\thick_\cc(\Phi^p T|p\in\mathbb{Z})$ and $\cd^c=\thick_\cd(F(T))$. Moreover, $\{\Phi^p X|p\in\mathbb{Z}\}$ is a set of compact generators of $\cc$ if and only if $\thick_\cc(\Phi^p X|p\in\mathbb{Z})=\cc^c$, and $F(X)$ is a compact generator of $\cd$ if and only if $\thick_\cd(F(X))=\cd^c$.

Assume that $\{\Phi^p X|p\in\mathbb{Z}\}$ is a set of compact generators of $\cc$. Then $F(X)\in\thick_\cd(F(T))$ and $F(T)\in\thick_\cd(F(X))$. So $F(X)$ is a compact generator of $\cd$.

Assume that $F(X)$ is a compact generator of $\cd$. Then $X$ is compact in $\cc\dslash\Phi$. So $\Phi^p X$, by Lemma~\ref{lem:compact-objects-in-orbit-cat}, are compact in $\cc$ for any $p\in\mathbb{Z}$. Further, let $Y$ be an object of $\cc$ such that $\Hom_\cc(\Phi^p X,\Sigma^q Y)=0$ for all $p,q\in\mathbb{Z}$. Then $\Hom_{\cc\dslash\Phi}(X,\Sigma^q Y)=\bigoplus_{p\in\mathbb{Z}}\Hom_\cc(X,\Phi^p \Sigma^q Y)=0$ for all $q\in\mathbb{Z}$, \ie $\Hom_{\cd}(F(X),\Sigma^q F(Y))=0$ for all $q\in\mathbb{Z}$. So $F(Y)=0$. It follows that $Y\cong 0$ in $\cc\dslash\Phi$ and hence $Y\cong 0$ in $\cc$. This shows that $\{\Phi^p X|p\in\mathbb{Z}\}$ is a set of compact generators of $\cc$.
\end{proof}

\subsection{Orbit categories of dg categories}

Let $\ca$ be a dg category with infinite direct sums and $\Phi$ be a dg auto-equivalence. Then $\ca\dslash\Phi$ is also a dg category and the projection functor $\pi:\ca\rightarrow\ca\dslash\Phi$ is a dg functor. Moreover, $H^0\Phi$ is an auto-equivalence of $H^0\ca$ and $H^0(\ca\dslash\Phi)=H^0\ca\dslash H^0\Phi$.
The projection functor $\pi$ induces the projection functor $H^0\pi:H^0\ca\rightarrow H^0\ca\dslash H^0\Phi$. In Lemma~\ref{lem:universal-property}, if $F$ is a dg functor, then $\bar{F}$ is also a dg functor.

Assume that  $\ca$ is pre-triangulated, then $H^0\ca$ is a triangulated category and $H^0\Phi$ is a triangle auto-equivalence of $H^0\ca$.

\begin{lemma}\label{lem:orbit-functor-from-dg-to-triangulated} Let $\cb$ be a pre-triangulated dg categories with infinite direct sums.
If $F:\ca\to\cb$ be a $\Phi$-orbitally fully faithful dg functor, then $H^0F:H^0\ca\to H^0\cb$ is an  $H^0\Phi$-orbitally fully faithful triangle functor.
\end{lemma}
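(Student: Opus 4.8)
The plan is to obtain the statement by applying $H^0$ to the fully faithful dg functor $\overline{F}:\ca\dslash\Phi\to\cb$ supplied by Lemma~\ref{lem:universal-property}. Unwinding the hypothesis, the fact that the dg functor $F$ is $\Phi$-orbitally fully faithful means that $F$ commutes with direct sums, that $F\circ\Phi\cong F$ as dg functors, and that the induced functor $\overline{F}$---which is again a dg functor, by the remark following Lemma~\ref{lem:universal-property}---is fully faithful \emph{as a dg functor}, i.e.\ each structure map $\Hom_{\ca\dslash\Phi}(X,Y)\to\Hom_\cb(\overline{F}X,\overline{F}Y)$ is an isomorphism of complexes. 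Since $\ca$ and $\cb$ are pre-triangulated, the remark at the end of Subsection~\ref{ss:dg-enhancements} already yields that $H^0F$ is a triangle functor, so it remains to check that $H^0F$ commutes with direct sums, that $H^0F\circ H^0\Phi\cong H^0F$, and that the induced functor $\overline{H^0F}$ is fully faithful.

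First I would verify that $H^0F$ commutes with direct sums, which is needed even for the conclusion to make sense. A coproduct $\bigoplus_i X_i$ formed in the dg category $\ca$ is still a coproduct in $H^0\ca$: one has $\Hom_\ca(\bigoplus_i X_i,Z)=\prod_i\Hom_\ca(X_i,Z)$ as complexes, and cohomology commutes with products of complexes of vector spaces, so the universal property of the coproduct descends to $H^0\ca$. Together with $F(\bigoplus_i X_i)\cong\bigoplus_i F(X_i)$ this gives $H^0F(\bigoplus_i X_i)\cong\bigoplus_i H^0F(X_i)$. Applying $H^0$ to the dg natural isomorphism $F\circ\Phi\cong F$ then gives $H^0F\circ H^0\Phi\cong H^0F$, and hence $H^0F$ factors through the projection as $\overline{H^0F}\circ H^0\pi$ for some $\overline{H^0F}:H^0\ca\dslash H^0\Phi\to H^0\cb$.

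The heart of the proof is a formal comparison at the level of morphism spaces. Applying $H^0$ to $\overline{F}$ produces a functor $H^0\overline{F}:H^0(\ca\dslash\Phi)\to H^0\cb$, which, under the identification $H^0(\ca\dslash\Phi)=H^0\ca\dslash H^0\Phi$ noted just above, becomes a functor $H^0\ca\dslash H^0\Phi\to H^0\cb$. Because each structure map of $\overline{F}$ is an isomorphism of complexes, its $H^0$ is bijective, so
\[
\Hom_{H^0\ca\dslash H^0\Phi}(X,Y)=H^0\Hom_{\ca\dslash\Phi}(X,Y)\xrightarrow{\ \sim\ }H^0\Hom_\cb(\overline{F}X,\overline{F}Y)=\Hom_{H^0\cb}(H^0\overline{F}X,H^0\overline{F}Y),
\]
whence $H^0\overline{F}$ is fully faithful. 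It then suffices to identify $H^0\overline{F}$ with $\overline{H^0F}$: both satisfy $(-)\circ H^0\pi=H^0F$, and since the explicit description of the induced functor in Lemma~\ref{lem:universal-property} is assembled from $F$, the natural isomorphism $\mu\colon F\circ\Phi\to F$, and the diagonal and summation maps---all compatible with passage to $H^0$---the two functors coincide. Therefore $\overline{H^0F}$ is fully faithful and $H^0F$ is $H^0\Phi$-orbitally fully faithful.

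I expect the main obstacle to be precisely this final identification, hand in hand with the correct reading of ``fully faithful''. One must exploit that $\overline{F}$ is fully faithful as a \emph{dg} functor---an honest isomorphism, not merely a quasi-isomorphism, of Hom-complexes---so that $H^0$ preserves full faithfulness; and one must check that the identification $H^0(\ca\dslash\Phi)=H^0\ca\dslash H^0\Phi$ carries $H^0\pi$ to the projection of the $H^0$-level orbit category, so that $H^0\overline{F}$ is genuinely the functor induced by $H^0F$. The remaining verifications, such as the compatibility of $H^0$ with the infinite coproducts, are routine but must be in place for the notion of $H^0\Phi$-orbital full faithfulness to apply.
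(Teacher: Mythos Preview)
Your proposal is correct and is precisely the argument the paper has in mind: the paper states this lemma without proof, having already recorded in the paragraph preceding it that $H^0(\ca\dslash\Phi)=H^0\ca\dslash H^0\Phi$, that $H^0\pi$ is the projection for the $H^0$-level orbit category, and that $\bar F$ is a dg functor when $F$ is. Your unwinding---applying $H^0$ to the fully faithful dg functor $\bar F$ and identifying $H^0\bar F$ with $\overline{H^0F}$---is exactly the intended justification.
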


\section{Bigraded dg algebras}\label{s:bigraded-dg-alg}

The notion of group-graded dg algebras was introduced in~\cite{Marcus03}.
In this paper, we will call $\mathbb{Z}$-graded dg algebras \emph{graded dg algebras} and call $\mathbb{Z}\times\mathbb{Z}$-graded dg algebras \emph{bigraded dg algebras}.
Results in this section will be used in Section~\ref{s:derived-equi-graded-alg}.

\subsection{Graded dg algebras}\label{ss:graded-dg-alg}

By a \emph{graded dg algebra} we mean a dg algebra with an extra
grading (often called the Adams grading). More precisely, a graded dg
algebra is a bigraded algebra endowed with a differential $d$ of
bidegree $(1,0)$ such that the following bigraded Leibniz rule holds
\[d(aa')=d(a)a'+(-1)^{|a|_1}ad(a'),\]
where $a$ is homogeneous of bidegree $(|a|_1,|a|_2)$.

Let $A$ be a graded dg algebra. We can construct from $A$ two
(ordinary) dg algebras. One, denoted by $FA$, is obtained from $A$ by
forgetting the Adams grading:
$(FA)^n=\bigoplus_{j\in\mathbb{Z}}A^{n,j}$. This is a dg algebra with differential and multiplication inherited from $A$. The other one, denoted by $\Tot A$, is obtained from $A$ by
taking the total complex:
\[
(\Tot A)^n=\bigoplus_{i,j:i+j=n}A^{ij} \text{ and } d_{\Tot A}(a)=(-1)^j d(a)
\]
for $a$ homogeneous of bidegree $(i,j)$. However, in order to
make it into a dg algebra, we need to introduce a twisted
multiplication:
\[a*a'=(-1)^{ij'}aa'\]
for $a$ and $a'$ homogeneous  of bidegree $(i,j)$ and $(i',j')$, respectively.

\begin{lemma} With the twisted multiplication $*$ the total complex
$\Tot A$ is a dg algebra.
\end{lemma}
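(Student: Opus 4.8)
The plan is to verify directly the three defining properties of a dg algebra for $\Tot A$ equipped with the total degree $|a|=|a|_1+|a|_2$, the twisted product $*$, and the differential $d_{\Tot A}$: that $*$ is an associative, grading-preserving product, that $d_{\Tot A}$ is a differential of degree $1$, and that the graded Leibniz rule holds with respect to the total degree. Since $*$ differs from the underlying bigraded product only by a sign, bilinearity and distributivity are inherited from $A$. For homogeneous $a,a'$ of bidegrees $(i,j)$ and $(i',j')$, the product $a*a'$ lies in the same bidegree $(i+i',j+j')$ as $aa'$, hence in total degree $|a|+|a'|$, so $*$ respects the total grading.

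For associativity, I would expand $(a*a')*a''$ and $a*(a'*a'')$ for homogeneous $a,a',a''$ of bidegrees $(i,j)$, $(i',j')$, $(i'',j'')$. Both reduce to the underlying triple product $aa'a''$ times a sign, and a short computation shows that each sign exponent equals $ij'+ij''+i'j''$ modulo $2$; associativity of the untwisted product in $A$ then concludes this step. For the differential, $d_{\Tot A}(a)=(-1)^{|a|_2}d(a)$ lies in bidegree $(|a|_1+1,|a|_2)$ because $d$ has bidegree $(1,0)$, so $d_{\Tot A}$ raises total degree by $1$; and $d_{\Tot A}^2(a)=(-1)^{2|a|_2}d^2(a)=0$ since $d^2=0$ in $A$.

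The only delicate point is the graded Leibniz rule $d_{\Tot A}(a*a')=d_{\Tot A}(a)*a'+(-1)^{|a|}\,a*d_{\Tot A}(a')$ with respect to the total degree $|a|=i+j$. Here I would substitute the definitions of $*$ and $d_{\Tot A}$, apply the bigraded Leibniz rule $d(aa')=d(a)a'+(-1)^{i}a\,d(a')$ of $A$, and then compare the coefficients of the two monomials $d(a)a'$ and $a\,d(a')$ appearing on each side. The main obstacle is purely the sign bookkeeping: one must track simultaneously the twisting sign $(-1)^{(\text{row of left})(\text{column of right})}$, the totalization sign $(-1)^{(\text{column})}$, and the Koszul sign $(-1)^{|a|}$ dictated by the target Leibniz rule, where the shift in bidegree produced by $d$ feeds back into the twisting sign. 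Carrying out the comparison shows that the coefficient of $d(a)a'$ equals $(-1)^{j+ij'+j'}$ on both sides and that of $a\,d(a')$ equals $(-1)^{i+j+j'+ij'}$ on both sides, so the identity holds term by term. These sign cancellations are exactly what the twist $a*a'=(-1)^{ij'}aa'$ and the sign $(-1)^{j}$ in $d_{\Tot A}$ are designed to produce, and together with the first two steps they establish that $\Tot A$ is a dg algebra.
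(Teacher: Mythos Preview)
Your proposal is correct and follows essentially the same approach as the paper: a direct sign verification of the graded Leibniz rule for $d_{\Tot A}$ with respect to the twisted product $*$, tracking the interaction of the twist sign $(-1)^{ij'}$, the totalization sign $(-1)^{j}$, and the degree shift caused by $d$. The paper's proof records only the Leibniz computation as a chain of equalities, whereas you also spell out associativity, compatibility with the total grading, and $d_{\Tot A}^2=0$; these extra checks are routine but make your argument more self-contained.
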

 When we say that $\Tot A$ is a dg algebra,
we always mean $\Tot A$ equipped with this twisted multiplication. If $A$ is a graded algebra, viewed as a graded dg algebra concentrated in degrees $(0,*)$, the dg algebra $\Tot A$ is exactly $A$ viewed as a dg algebra with trivial differential.

\begin{proof} Let $a$ and $a'$ be homogeneous elements of $A$
 of bidegree $(i,j)$ and $(i',j')$, respectively. Then the graded
Leibniz rule holds:
\begin{align*}
d_{\Tot A}(a*a')&=(-1)^{ij'}d_{\Tot A}(aa')\\
&=(-1)^{ij'}(-1)^{j+j'}d(aa')\\
&=(-1)^j (-1)^{(i+1)j'}d(a)a'+ (-1)^{j+j'+i}(-1)^{ij'}ad(a')\\
&=(-1)^j d(a)*a'+(-1)^{i+j}(-1)^{j'}a*d(a')\\
&=d_{\Tot A}(a)*a'+(-1)^{i+j}a*d_{\Tot A}(a').
\end{align*}
\end{proof}

\smallskip
Let $H^{*,*}A=\bigoplus_{i,j\in\mathbb{Z}}H^{i,j}A$ be the total cohomology of $A$, which is naturally a
bigraded algebra. We view it as a graded dg algebra with trivial differential.

\begin{lemma}\label{l:cohomology-of-graded-dg-alg}
We have $FH^{*,*}A=H^*FA$ and $\Tot H^{*,*}A=H^*\Tot A$.
\end{lemma}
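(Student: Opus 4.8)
The plan is to exploit the one structural fact that drives everything: the differential $d$ has bidegree $(1,0)$, so it preserves the Adams grading (the second index $j$). Both the forgetting construction $FA$ and the totalization $\Tot A$ carry differentials that agree with $d$ up to a sign depending only on $j$, so neither construction mixes the distinct Adams-graded pieces when one passes to cocycles and coboundaries. This decoupling is what makes both identities essentially bookkeeping.

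First I would settle both equalities at the level of underlying graded vector spaces. Since $d\colon A^{i,j}\to A^{i+1,j}$ respects $j$, the complex $FA$ is the direct sum over $j$ of the complexes $(A^{\bullet,j},d)$; likewise the differential $d_{\Tot A}=(-1)^j d$ differs from $d$ only by the unit $(-1)^j$, which alters neither $\ker$ nor $\im$. Hence the cohomology in each bidegree $(i,j)$, computed inside $FA$ or inside $\Tot A$, is exactly $H^{i,j}A$. Reassembling according to the two grading conventions yields $H^n FA=\bigoplus_{j}H^{n,j}A=(FH^{*,*}A)^n$ and $H^n\Tot A=\bigoplus_{i+j=n}H^{i,j}A=(\Tot H^{*,*}A)^n$. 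Both target objects carry the trivial differential, consistent with the fact that a cohomology algebra has none.

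Next I would match the multiplications. For $F$ this is immediate: the product on $FA$ is that of $A$ unchanged, so the induced product on $H^*FA$ is $[a][a']=[aa']$, which is exactly the product of $FH^{*,*}A$. For $\Tot$ the only point to check is the twisted product $a*a'=(-1)^{ij'}aa'$. For $d$-cocycles $a,a'$ of bidegrees $(i,j)$ and $(i',j')$ the induced product in $H^*\Tot A$ is $[a]\cdot[a']=[a*a']=(-1)^{ij'}[aa']$, while the twisted product in $\Tot H^{*,*}A$ is $[a]*[a']=(-1)^{ij'}[a][a']=(-1)^{ij'}[aa']$; the two coincide.

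The argument involves no genuinely hard step; the one place demanding care is the bookkeeping of signs, namely confirming that the factor $(-1)^j$ defining $d_{\Tot A}$ disappears upon passing to cohomology and that the factor $(-1)^{ij'}$ in the twisted product is given by the identical formula whether one totalizes $A$ first and then takes cohomology, or takes cohomology first and then totalizes $H^{*,*}A$. Since $H^{*,*}A$ inherits its bigrading from $A$ and both twisted products are defined by the same sign rule in the bidegrees, no spurious sign can appear and the two descriptions agree verbatim.
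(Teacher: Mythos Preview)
Your argument is correct; the paper itself gives no proof of this lemma at all, treating it as self-evident and moving on. Your verification---decoupling the Adams grading because $d$ has bidegree $(1,0)$, observing that the sign $(-1)^j$ in $d_{\Tot A}$ does not affect kernels or images, and then checking that the twisted product formula $(-1)^{ij'}$ is literally the same on both sides---is exactly the routine check the authors omitted.
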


The two operations $F$ and $\Tot$ extend naturally to functors from
the category of graded dg algebras to the category of dg algebras.
Both functors detect quasi-isomorphisms. The graded dg algebra $A$ is said to be \emph{formal} if $A$ and $H^{*,*}A$ are related by a zigzag of quasi-isomorphisms of graded dg algebras.

\begin{lemma}\label{l:transfering-formalness-graded}
\begin{itemize}
\item[(a)] If $A$ is formal, then both $FA$ and $\Tot A$ are formal.
\item[(b)] If $FA$ has cohomology concentrated in degree $0$, then $A$
is formal.
\item[(c)] If $\Tot A$ has cohomology concentrated in degree $0$,
then $A$ is formal.
\end{itemize}
\end{lemma}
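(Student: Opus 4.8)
The plan is to prove (a) by pure functoriality and (b), (c) by canonical-truncation zigzags, the only difference between the latter two being the grading in which one truncates.

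For (a), the key observation is that both $F$ and $\Tot$ send quasi-isomorphisms of graded dg algebras to quasi-isomorphisms of dg algebras. This is immediate from Lemma~\ref{l:cohomology-of-graded-dg-alg}, which gives $H^nFA=\bigoplus_{j}H^{n,j}A$ and $H^n\Tot A=\bigoplus_{i+j=n}H^{i,j}A$: a morphism inducing an isomorphism on every $H^{i,j}A$ therefore induces an isomorphism on all of $H^*FA$ and $H^*\Tot A$. Applying $F$ (respectively $\Tot$) termwise to a zigzag of quasi-isomorphisms connecting $A$ with $H^{*,*}A$ then produces a zigzag of quasi-isomorphisms connecting $FA$ with $FH^{*,*}A=H^*FA$ (respectively $\Tot A$ with $\Tot H^{*,*}A=H^*\Tot A$), where the identification of the right-hand term is again Lemma~\ref{l:cohomology-of-graded-dg-alg}. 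Hence $FA$ and $\Tot A$ are formal.

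For (b) and (c), the idea is to link $A$ to $H^{*,*}A$ through a double canonical truncation $A\xleftarrow{\sim}\tau_{\le 0}A\xrightarrow{\sim}\tau_{\ge 0}\tau_{\le 0}A\cong H^{*,*}A$, taken inside a single grading in which $A$ behaves like an ordinary cochain dg algebra. Since $d$ has bidegree $(1,0)$ and the multiplication is additive in each of the two degrees, $d$ raises both the cohomological degree $i$ and the total degree $t:=i+j$ by one while the multiplication is additive in each; so either grading may be used, and because the truncations modify only one component while remembering both, they automatically preserve the full bigraded dg structure. In case (b) the hypothesis $H^nFA=0$ for $n\neq 0$ means $H^{i,j}A=0$ for $i\neq 0$, i.e. the cohomology is concentrated in cohomological degree $0$, and one truncates in $i$. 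In case (c) the hypothesis $H^n\Tot A=0$ for $n\neq 0$ means $H^{i,j}A=0$ whenever $i+j\neq 0$, i.e. the cohomology is concentrated on the anti-diagonal, and one truncates in the total degree $t$.

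Concretely, writing $\tau_{\le 0}$ for the truncation in the chosen grading, I would set $(\tau_{\le 0}A)^{i,j}$ equal to $A^{i,j}$ in negative degree, to the cocycles $Z^{i,j}A=\ker(d)$ in degree $0$, and to $0$ in positive degree. The inclusion $\tau_{\le 0}A\hookrightarrow A$ is a quasi-isomorphism because the cohomology in positive degree vanishes, and $\tau_{\ge 0}\tau_{\le 0}A$ is the quotient of $\tau_{\le 0}A$ by the two-sided ideal consisting of everything in negative degree together with the coboundaries $\im(d)$ in degree $0$; this quotient is exactly $H^{*,*}A$. The main obstacle, and essentially the only nonformal content, is the multiplicative bookkeeping: one must check that $\tau_{\le 0}A$ is a subalgebra and that the coboundary part is a two-sided ideal. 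Both reduce to the observation that if two homogeneous elements lie in non-positive degree and their product lands in degree $0$, then both factors already lie in degree $0$, so cocycles multiply to cocycles, a coboundary times a cocycle equals $d(\,\cdot\,)$ hence stays a coboundary, and all products of strictly lower degree fall where the ideal is everything. It is precisely this step that dictates truncating in the total degree in case (c): on the anti-diagonal the cohomological degree $i$ is not constant, so truncation in $i$ would fail to be multiplicative, whereas $d$ is still homogeneous of degree $+1$ for the total grading $t=i+j$, making the total-degree truncation work verbatim.
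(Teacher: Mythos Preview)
Your proof is correct and follows essentially the same approach as the paper: part (a) via functoriality and Lemma~\ref{l:cohomology-of-graded-dg-alg}, and parts (b) and (c) via a one-sided canonical truncation followed by projection onto cohomology, with the only difference between (b) and (c) being whether one truncates in the cohomological degree $i$ or the total degree $i+j$. The paper phrases the truncation in (c) as $(A')^{*,j}=\sigma^{\leq -j}(A^{*,j})$, which unpacks to exactly your total-degree truncation $\tau_{\le 0}A$; your explicit verification that this truncation is closed under multiplication is a bit more detailed than the paper's ``it is easy to check'', but the content is identical.
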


\begin{proof} (a) is clear in view of
Lemma~\ref{l:cohomology-of-graded-dg-alg}.

(b) For $j\in\mathbb{Z}$, let
$A^{*,j}=\bigoplus_{i\in\mathbb{Z}}A^{ij}$. Then $A^{*,j}$ is a
complex and as a complex $FA=\bigoplus_{j\in\mathbb{Z}}A^{*,j}$.
Define $\sigma^{\leq 0}A$ as the bigraded subspace of $A$ such that
$(\sigma^{\leq 0}A)^{*,j}=\sigma^{\leq 0}(A^{*,j})$, the standard truncation of $A^{*,j}$ at degree $0$. It is easy to check that
$\sigma^{\leq 0}A$ is a graded dg subalgebra of $A$. Moreover, that $FA$ has
cohomology concentrated in degree $0$ implies that $A$ has
cohomology concentrated in bidegrees $(0,*)$, which implies that the inclusion $\sigma^{\leq 0}A\hookrightarrow A$ is a quasi-isomorphism. Further, the canonical
projection $\sigma^{\leq 0}A\rightarrow H^{*,*}(\sigma^{\leq 0}A)=H^{*,*}A$ is also a quasi-isomorphism.

(c) Similar to (b). As a complex $\Tot A=\bigoplus_{j\in\mathbb{Z}}A^{*,j}[-j]$. Let $A'$ be the bigraded subspace of $A$ such that $(A')^{*,j}=\sigma^{\leq -j}(A^{*,j})$. Then $A'$ is a graded dg subalgebra of $A$. That $\Tot A$ has cohomology concentrated in degree $0$ means exactly that the
cohomology $H^{*,*}(A)$ of $A$ is concentrated in degrees $(i,-i)$, $i\in\mathbb{Z}$. Therefore the inclusion $A'\hookrightarrow A$ is a quasi-isomorphism. Further, the canonical projection $A'\rightarrow H^{*,*}(A')=H^{*,*}(A)$ is a also a quasi-isomorphism.
\end{proof}

\begin{remark}
The idea of proving formality by considering a second grading
has been used for a long time in geometry,
for example see~\cite{DeligneGriffithsMorganSullivan75}\footnote{We thank Wolfgang Soergel for this reference.}.
\end{remark}

\subsection{Bigraded dg algebras}\label{ss:bigraded-dg-alg}
A \emph{bigraded dg algebra} is a dg algebra with two extra
gradings. Precisely, a bigraded dg algebra is a trigraded algebra
endowed with a differential $d$ of tridegree $(1,0,0)$ such that the
following trigraded Leibniz rule holds
\[d(aa')=d(a)a'+(-1)^{|a|_1}ad(a'),\]
where $a$ is homogeneous of tridegree $(|a|_1,|a|_2,|a|_3)$. We will
refer to the three gradings respectively as the complex grading, the first Adams grading and the second Adams grading.
Let $F_1A$ denote the graded dg algebra obtained from $A$ by forgetting the first Adams grading, and let $F_2A$ denote the graded dg algebra obtained from $A$ by forgetting the second Adams grading.
Let $H^{*,*,*}A=\bigoplus_{i,j,l\in\mathbb{Z}}H^{i,j,l}A$ be the total cohomology of $A$. It is naturally a trigraded algebra. We view it as a bigraded dg algebra with trivial differential.

\begin{lemma}\label{l:cohomology-of-bigraded-dg-alg} We have
$\Tot F_1 H^{*,*,*}A=H^*\Tot F_1 A$ and $\Tot F_2 H^{*,*,*}A=H^*\Tot F_2 A$.
\end{lemma}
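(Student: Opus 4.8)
The plan is to reduce Lemma~\ref{l:cohomology-of-bigraded-dg-alg} to the already-established Lemma~\ref{l:cohomology-of-graded-dg-alg} by viewing the bigraded dg algebra $A$ through the lens of a single forgetful operation. The key observation is that $\Tot F_1$ and $\Tot F_2$ are not genuinely new functors: each is obtained by first forgetting one of the two Adams gradings (producing an ordinary graded dg algebra) and then totalizing with respect to the surviving Adams grading against the complex grading. So the strategy is to check that $F_1$ and $F_2$ are compatible with taking cohomology, and then to invoke the totalization identity already proved in the graded setting.

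First I would set $B=F_2 A$, the graded dg algebra obtained from $A$ by forgetting the second Adams grading; its complex grading is the original complex grading, and its single (Adams) grading is the first Adams grading of $A$. The point is that forgetting a grading commutes with passing to cohomology: for each fixed value of the first Adams grading $j$ and each complex degree $i$, the cohomology $H^{i,j}B$ is computed componentwise, so $H^{*,*}(F_2 A)=F_2(H^{*,*,*}A)$ as bigraded algebras. This is the direct analogue, one grading down, of the evident identity $FH^{*,*}A=H^*FA$ appearing in Lemma~\ref{l:cohomology-of-graded-dg-alg}, and it holds because the differential of $A$ has tridegree $(1,0,0)$ and hence preserves both Adams gradings. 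Applying Lemma~\ref{l:cohomology-of-graded-dg-alg} to the graded dg algebra $B$ gives $\Tot H^{*,*}B=H^*\Tot B$, that is
\[
\Tot\bigl(H^{*,*}(F_2 A)\bigr)=H^*\Tot(F_2 A).
\]
Combining this with the identity $H^{*,*}(F_2 A)=F_2(H^{*,*,*}A)$ yields $\Tot F_2 H^{*,*,*}A=H^*\Tot F_2 A$, which is the second asserted equality. The first equality $\Tot F_1 H^{*,*,*}A=H^*\Tot F_1 A$ follows by the symmetric argument with $F_1$ in place of $F_2$, setting $B=F_1 A$ and totalizing against the second Adams grading.

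I expect the only genuinely delicate point to be the sign and grading bookkeeping needed to confirm that $F_2 A$, equipped with the complex grading and the first Adams grading inherited from $A$, really is a graded dg algebra in the precise sense of Subsection~\ref{ss:graded-dg-alg}, so that Lemma~\ref{l:cohomology-of-graded-dg-alg} genuinely applies to it. One must verify that the trigraded Leibniz rule for $A$ descends to the bigraded Leibniz rule for $F_2 A$ (it does, since the sign $(-1)^{|a|_1}$ involves only the complex grading, which is untouched), and that the twisted multiplication used to define $\Tot$ is the same whether computed on $A$ before or after forgetting the second grading. Once these compatibilities are recorded, the proof is a two-line concatenation of naturality of cohomology under the forgetful functor with the graded-case totalization identity; no new analysis is required. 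The remark is simply that this entire lemma is the bigraded shadow of Lemma~\ref{l:cohomology-of-graded-dg-alg}, obtained by treating one Adams grading as ``the'' grading and the other as inert.
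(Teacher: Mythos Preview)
Your reduction is correct and is exactly the intended reasoning: the paper itself gives no proof of this lemma, treating it as the evident bigraded analogue of Lemma~\ref{l:cohomology-of-graded-dg-alg}, and your argument spells out precisely why that analogy goes through. There is nothing to add; the sign and Leibniz checks you flag as ``delicate'' are indeed routine since the differential has tridegree $(1,0,0)$ and the $\Tot$-twist depends only on the complex degree and the single surviving Adams degree.
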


The bigraded dg algebra $A$ is said to be \emph{formal} if $A$ is related to $H^{*,*,*}A$ by a zigzag of quasi-isomorphisms of bigraded dg algebras.

\begin{lemma}\label{l:transfering-formalness-bigraded}
Let $A$ be a bigraded dg algebra. If $\Tot F_1 A$ has cohomology
concentrated in degree $0$, then $A$ and $\Tot F_2 A$ are formal.
\end{lemma}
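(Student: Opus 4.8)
The plan is to mimic the truncation argument in the proof of Lemma~\ref{l:transfering-formalness-graded}(c) one grading higher, establishing formality of $A$ first, and then to transport this formality along the functor $\Tot F_2$. The first step is to reinterpret the hypothesis cohomologically. Since the differential of $A$ has tridegree $(1,0,0)$ and $F_1$ forgets the first Adams grading, for fixed second Adams degree $l$ the complex $(F_1A)^{*,l}=\bigoplus_j A^{*,j,l}$ computes $H^{i,l}(F_1A)=\bigoplus_j H^{i,j,l}(A)$, and the decomposition of complexes $\Tot F_1 A=\bigoplus_{l}(F_1A)^{*,l}[-l]$ gives $H^n(\Tot F_1 A)=\bigoplus_{i+l=n}\bigoplus_j H^{i,j,l}(A)$. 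Thus the assumption that $\Tot F_1 A$ has cohomology concentrated in degree $0$ is equivalent to $H^{i,j,l}(A)=0$ whenever $i+l\neq 0$; that is, $H^{*,*,*}(A)$ is concentrated on the anti-diagonal $i=-l$, the exact trigraded analogue of the concentration used in Lemma~\ref{l:transfering-formalness-graded}(c).

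With this in hand I would construct a formality zigzag for $A$. For each pair $(j,l)$ the complex $A^{*,j,l}$ (in the complex grading $i$) has cohomology concentrated in degree $-l$, so define the trigraded subspace $A'$ by $(A')^{*,j,l}=\sigma^{\leq -l}(A^{*,j,l})$, the good truncation at complex degree $-l$. The one substantive point, and the step I expect to be the main obstacle, is that this $l$-dependent truncation is multiplicatively closed, so that $A'$ is a bigraded dg subalgebra. Here the truncation bound is exactly sub-additive along the anti-diagonal: if $a\in (A')^{i,j,l}$ and $a'\in (A')^{i',j',l'}$ then $i\leq -l$ and $i'\leq -l'$, whence $i+i'\leq -(l+l')$; when this is strict the product lies strictly below the truncation level $-(l+l')$ and is retained, while equality forces $i=-l$ and $i'=-l'$, so $a$ and $a'$ are cocycles and $d(aa')=d(a)a'\pm a\,d(a')=0$, i.e.\ $aa'$ is again a cocycle and survives the truncation. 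Closure under $d$ is clear. For each $(j,l)$ the inclusion $\sigma^{\leq -l}(A^{*,j,l})\hookrightarrow A^{*,j,l}$ and the canonical projection $\sigma^{\leq -l}(A^{*,j,l})\to H^{-l,j,l}(A)$ (in complex degree $-l$) are quasi-isomorphisms because all cohomology lies in degree $-l$, and the same cocycle-to-class computation shows the projection respects products. This produces quasi-isomorphisms of bigraded dg algebras $A\xleftarrow{\ \sim\ }A'\xrightarrow{\ \sim\ }H^{*,*,*}(A)$, so $A$ is formal.

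It then remains to deduce formality of $\Tot F_2 A$. Forgetting a grading and totalizing each commute with cohomology and reflect isomorphisms of (tri- and bi-)graded vector spaces, so both $F_2$ and $\Tot$, and hence their composite $\Tot F_2$, preserve quasi-isomorphisms. Applying $\Tot F_2$ to the zigzag above yields quasi-isomorphisms of dg algebras $\Tot F_2 A\xleftarrow{\ \sim\ }\Tot F_2 A'\xrightarrow{\ \sim\ }\Tot F_2 H^{*,*,*}(A)$, and by Lemma~\ref{l:cohomology-of-bigraded-dg-alg} the right-hand term is precisely $H^*(\Tot F_2 A)$. This is a zigzag relating $\Tot F_2 A$ to its own total cohomology, proving that $\Tot F_2 A$ is formal.
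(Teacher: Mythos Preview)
Your proposal is correct and follows essentially the same two-step approach the paper sketches: first prove $A$ formal by the truncation argument analogous to Lemma~\ref{l:transfering-formalness-graded}(c), then push the resulting zigzag through $\Tot F_2$ as in Lemma~\ref{l:transfering-formalness-graded}(a), invoking Lemma~\ref{l:cohomology-of-bigraded-dg-alg} at the end. You have in fact supplied more detail than the paper does---in particular the verification that the $l$-dependent truncation $A'$ is multiplicatively closed---but the underlying argument is the same.
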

\begin{proof} The proof is divided into two steps: (1) if $\Tot F_1
A$ has cohomology concentrated in degree $0$, then $A$ is formal; (2)
if $A$ is formal, then $\Tot F_2 A$ is formal. The proof for step (1)
is similar to that for Lemma~\ref{l:transfering-formalness-graded}
(c) and the proof for step (2) is similar to that for
Lemma~\ref{l:transfering-formalness-graded} (a).
\end{proof}


\section{Graded algebras as dg algebras}\label{s:graded-alg-as-dg-alg}

This section deals with the relation between the derived category
of the abelian category of graded modules over a graded algebra and
the derived category of dg modules over the graded algebra viewed as a dg algebra
with trivial differential. For a graded algebra $A$ we will show that $\per(A)$ is a triangulated hull of the
orbit category $\ch^b(\grproj A)/\Sigma\circ\langle -1\rangle$.

\medskip

Let $A$ be a graded $k$-algebra.

\subsection{The functor $\Tot$ of taking total complexes}\label{ss:tot}

Let $\Grmod A$ (respectively, $\Grproj A$, $\grproj A$, $\grmod_0 A$) denote the category of graded $A$-modules (respectively, graded projective $A$-modules, finitely generated graded projective $A$-modules, finite-dimensional graded $A$-modules). $\Grmod A$ is an abelian category, so we can form the dg category $\cc_{dg}(\Grmod A)$ of complexes of graded $A$-modules, see Section~\ref{ss:derived-cat-of-abelian-cat}. The dg category $\cc_{dg}(\Grmod A)$ has two natural dg automorphisms: the complex shift $[1]$ and the degree
shifting $\langle 1\rangle$. They commute with each other.

Viewing $A$ as a dg algebra with trivial differential, we
consider the category $\cc_{dg}(A)$ of dg $A$-modules, see
Section~\ref{ss:derived-cat-of-dg-alg}.

There is a natural dg functor from $\cc_{dg}(\Grmod A)$ to
$\cc_{dg}(A)$ constructed as follows. Any object of $\cc_{dg}(\Grmod
A)$ is of the form $M=\bigoplus_{i,j\in\mathbb{Z}} M^{ij}$, where
$M^{i,*}=\bigoplus_{j\in\mathbb{Z}}M^{i,j}$ is a graded $A$-module,
and $d:M^{i,*}\rightarrow M^{i+1,*}$ is a differential (\ie $d$ is
of bidegree $(1,0)$). We can view $M$ as a bicomplex and take the
total complex $\Tot M$
\[(\Tot M)^n=\bigoplus_{i,j:i+j=n}M^{ij}\text{ and } d_{\Tot M}(m)=(-1)^j d_M(m) \text{ for } m\in M^{ij}.\]
It is easy to check that $\Tot M$, with the twisted $A$-action
\[
m*a=(-1)^{ij'}ma
\text{ for } m\in M^{ij}\text{ and }a\in A^{j'},
\]
becomes a dg $A$-module. For a homogeneous
morphism $f\in\Hom_{\cc_{dg}(\Grmod A)}(M,N)$ of degree $p$, we define
$\Tot f\in\Hom_{\cc_{dg}(A)}(\Tot M,\Tot N)$ so that on each component $M^{ij}$ it coincides with $(-1)^{pi}f$.
In this way, we have defined a dg functor
\[\Tot:\cc_{dg}(\Grmod A)\longrightarrow \cc_{dg}(A).\]
It induces a triangle functor
\[\Tot:\ch(\Grmod A)\longrightarrow\ch(A).\]
Since $\Tot$ preserves acyclicity, this in turn induces a triangle functor
\[\Tot:\cd(\Grmod A)\longrightarrow\cd(A).\]


It is easy to check that we have an equality
$\Tot\circ [1]\circ\langle
-1\rangle=\Tot$ of dg endofunctors of $\cc_{dg}(\Grmod A)$. Moreover, $\Tot$ commutes with infinite direct sums. Thus we have an induced dg functor
\[
\overline{\Tot}:\cc_{dg}(\Grmod A)\dslash[1]\circ\langle-1\rangle\longrightarrow\cc_{dg}(A).
\]
On objects $\overline{\Tot}M=\Tot M$ and on
morphisms $\overline{\Tot}$ is defined by
\[\xymatrix{\Hom_{\cc_{dg}(\Grmod A)}(M,
\bigoplus_{p\in\mathbb{Z}}N[p]\langle-p\rangle)\ar[r]^(0.47){\Tot}\ar[ddr]_{\overline{\Tot}}&
\Hom_{\cc_{dg}(A)}(\Tot M, \bigoplus_{p\in\mathbb{Z}}\Tot N[p]\langle-p\rangle)\ar@{=}[d]\\
&\Hom_{\cc_{dg}(A)}(\Tot M,\bigoplus_{p\in\mathbb{Z}}\Tot N)\ar[d]^{\text{diag}_*}\\
&\Hom_{\cc_{dg}(A)}(\Tot M,\Tot N).
}\]
See also \cite[Lemmas 3.2 and 3.4]{ChenYang15} for
the statement (d) of the following corollary.

\begin{theorem}\label{thm:tot-is-orbital}
$\Tot:\cc_{dg}(\Grmod A)\to \cc_{dg}(A)$ is a $([1]\circ\langle-1\rangle)$-orbitally fully faithful dg functor.  We have the following consequences.
\begin{itemize}
\item[(a)] $\Tot:\ch(\Grmod A)\to \ch(A)$ is $(\Sigma\circ\langle-1\rangle)$-orbitally fully faithful.
\item[(b)] $\Tot:\cd(\Grmod A)\to \cd(A)$ is $(\Sigma\circ\langle-1\rangle)$-orbitally fully faithful.
\item[(c)] $\Tot$ induces a fully faithful functor $\ch^b(\Grproj A)/\Sigma\circ\langle-1\rangle\to \cd(A)$. If $A$ has finite global dimension, $\Tot$ induces a fully faithful functor $\cd^b(\Grmod A)/\Sigma\circ\langle-1\rangle\to \cd(A)$.
\item[(d)] $\per(A)$ is a triangulated hull of the orbit category $\ch^b(\grproj
A)/\Sigma\circ\langle-1\rangle$.
\item[(e)] If $A$ is graded hereditary, then $\Tot$ induces triangle equivalences
\begin{align*}
\cd(A)&\simeq\cd(\Grmod A)\dslash\Sigma\circ\langle-1\rangle\\
&\simeq \cd^b(\Grmod A)/\Sigma\circ\langle-1\rangle,\\
\per(A)&\simeq\ch^b(\grproj
 A)/\Sigma\circ\langle-1\rangle,\\
&\hspace{4pt}\simeq \cd^b(\grmod A)/\Sigma\circ\langle-1\rangle,\\
\cd_{fd}(A)&\simeq\cd^b(\grmod_0 A)/\Sigma\circ\langle-1\rangle,
\end{align*}
where $\grmod A$ is the category of finitely presented graded $A$-modules.
\end{itemize}
\end{theorem}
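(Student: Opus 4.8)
The backbone is the asserted statement that the dg functor $\overline{\Tot}\colon \cc_{dg}(\Grmod A)\dslash[1]\circ\langle-1\rangle\to\cc_{dg}(A)$ is fully faithful; items (a)--(e) will all be formal consequences. To prove it I would fix complexes $M,N$ of graded $A$-modules and compare the two Hom-complexes directly. Writing $\Phi=[1]\circ\langle-1\rangle$, a homogeneous element of degree $q$ in $\cHom_{\Grmod A}\big(M,\bigoplus_{p\in\mathbb{Z}}\Phi^p N\big)$ restricts on the bihomogeneous piece $M^{ij}$ to a graded map into $\bigoplus_{p}N^{\,i+q+p,\,j-p}$, whereas a degree-$q$ element of $\cHom_A(\Tot M,\Tot N)$ restricts on $M^{ij}\subseteq(\Tot M)^{i+j}$ to a map into $(\Tot N)^{i+j+q}=\bigoplus_{i'}N^{\,i',\,i+j+q-i'}$. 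The substitution $i'=i+q+p$ matches these two underlying graded-vector-space data bijectively, so everything reduces to checking that (i) the twisted $A$-linearity defining $\cHom_A(\Tot M,\Tot N)$ corresponds exactly to graded $A$-linearity of each component on the orbit side, and (ii) the differentials of the two Hom-complexes agree under the identification. Both are precisely what the sign conventions $m*a=(-1)^{ij'}ma$, $d_{\Tot M}=(-1)^jd_M$ and $\Tot f=(-1)^{pi}f$ were arranged to guarantee. This sign bookkeeping is the most tedious ingredient, but it is mechanical; once it is done, $\overline{\Tot}$ is fully faithful by construction and $\Tot$ is $([1]\circ\langle-1\rangle)$-orbitally fully faithful.

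Part (a) is then immediate from Lemma~\ref{lem:orbit-functor-from-dg-to-triangulated}: applying $H^0$ to the orbitally fully faithful dg functor $\Tot$ yields the $(\Sigma\circ\langle-1\rangle)$-orbitally fully faithful triangle functor $\Tot\colon\ch(\Grmod A)\to\ch(A)$. For (b) I would descend to derived categories via $\ch$-projective resolutions. Given $M$, take a cofibrant resolution $\mathrm{p}M\to M$ (Theorem~\ref{t:existence-of-resolutions}); since $\Tot$ sends each $A\langle j\rangle$ to a shift of the free dg module $A$ and commutes with direct sums, with summands, and with the cofibrant filtration, $\Tot(\mathrm{p}M)$ is again cofibrant, hence $\ch$-projective, and $\Tot(\mathrm{p}M)\to\Tot M$ is a quasi-isomorphism because $\Tot$ preserves acyclicity. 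Computing the orbit-Hom in $\cd(\Grmod A)$ and the Hom in $\cd(A)$ through these resolutions turns both into homotopy-category Homs, so the isomorphism of (b) follows from (a).

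For (c), bounded complexes of graded projectives are $\ch$-projective, so $\ch^b(\Grproj A)$ embeds fully faithfully into $\cd(\Grmod A)$. Boundedness in the complex degree forces $\Hom_{\ch}(X,\Phi^pY)=0$ once $|p|$ is large, so the small orbit category $\ch^b(\Grproj A)/\Sigma\circ\langle-1\rangle$ coincides with the large one restricted to these objects and embeds into $\cd(\Grmod A)\dslash\Sigma\circ\langle-1\rangle$; composing with the fully faithful functor of (b) gives the claim, and when $\gldim A<\infty$ one has $\cd^b(\Grmod A)\simeq\ch^b(\Grproj A)$. Part (d) is the specialisation to finitely generated projectives: $\Tot$ carries each object of $\ch^b(\grproj A)$ into $\per(A)=\thick_{\cd(A)}(A)$, it satisfies $\Tot A=A$, and $\overline{\Tot}$ is fully faithful by (c); hence $\thick(\im\overline{\Tot})=\thick(A)=\per(A)$, which is exactly the definition of $\per(A)$ being a triangulated hull of $\ch^b(\grproj A)/\Sigma\circ\langle-1\rangle$.

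Finally (e), the graded hereditary case, is where the real work lies: I must upgrade ``triangulated hull'' to ``equivalence'', i.e.\ show $\overline{\Tot}$ is essentially surjective, equivalently that the orbit category is already triangulated so that no objects need to be adjoined. Since $\gldim A\le1$ one has $\ch^b(\grproj A)\simeq\cd^b(\grmod A)$, the bounded derived category of a hereditary category, and the autoequivalence $\Sigma\circ\langle-1\rangle$ is the suspension composed with a heart-autoequivalence, so that no nonzero power of it sends a module to a module; thus Keller's theorem on triangulated orbit categories~\cite{Keller05} applies and shows the orbit category is triangulated, whence it equals its hull $\per(A)$ and $\overline{\Tot}$ is an equivalence. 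The large-category statement $\cd(A)\simeq\cd(\Grmod A)\dslash\Sigma\circ\langle-1\rangle$ then follows by combining orbital fully faithfulness (b) with Lemma~\ref{lem:compact-generator-of-orbit-category}: the family $\{\Phi^pA\}_{p\in\mathbb{Z}}$ is a set of compact generators of $\cd(\Grmod A)$ and $\Tot A=A$ is a compact generator of $\cd(A)$, so once the orbit category is triangulated with coproducts, $\overline{\Tot}$ is a coproduct-preserving fully faithful triangle functor sending a compact generator to a compact generator and is therefore an equivalence. Restricting to the subcategories of objects with finite-dimensional total cohomology, together with $\cd^b(\grmod A)\simeq\ch^b(\grproj A)$ and its finite-dimensional analogue, yields the remaining equivalences for $\per(A)$, $\cd_{fd}(A)$ and $\cd^b(\Grmod A)$. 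The main obstacle is precisely verifying the hypotheses of Keller's theorem in order to conclude that the orbit category is triangulated; the earlier sign computation is only tedious, not hard.
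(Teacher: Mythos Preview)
Your treatment of the main orbital-fully-faithful claim and of parts (a)--(d) is essentially the paper's proof: the same Hom-complex comparison (the paper organises it via an auxiliary twisted module $X_{\diamond i}$, but the computation is identical), then the formal descent through Lemma~\ref{lem:orbit-functor-from-dg-to-triangulated}, $\ch$-projective resolutions, and the boundedness observation that makes the small and large orbit Homs agree.

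For (e) you take a genuinely different route. You want to invoke Keller's theorem \cite{Keller05} to show that $\ch^b(\grproj A)/\Sigma\circ\langle-1\rangle$ is already triangulated, whence it equals its hull $\per(A)$, and then bootstrap to the large categories. The paper instead argues density directly: by \cite[Theorem~3.1]{KellerYangZhou09}, for $A$ graded hereditary the total-cohomology functor $H^*\colon\cd(A)\to\Grmod A$ is a bijection on isomorphism classes, so every object of $\cd(A)$ is isomorphic to a graded $A$-module with trivial differential, i.e.\ to $\Tot$ of a complex concentrated in cohomological degree~$0$. This gives density of $\overline{\Tot}$ on $\cd(A)$ and on $\cd^b(\Grmod A)/\Phi$ simultaneously, and restriction handles $\per(A)$ and $\cd_{fd}(A)$; no appeal to \cite{Keller05} is needed.

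Your route has two gaps as written. First, Keller's Theorem~1 in \cite{Keller05} is stated for $\mathcal{H}=\mod H$ with $H$ a finite-dimensional hereditary algebra; here $\grmod A$ is equivalent to representations of an infinite quiver (Section~\ref{ss:graded-module-over-graded-affine-A}), so you would need the dg-categorical generalisation and must still identify Keller's construction of the triangulated hull with $\per(A)$---possible, but not free. Second, and more seriously, even granting $\per(A)\simeq\ch^b(\grproj A)/\Phi$, your argument for $\cd(A)\simeq\cd(\Grmod A)\dslash\Phi$ presumes the \emph{large} orbit category is itself triangulated so that the ``fully faithful, coproduct-preserving, compact generator to compact generator'' criterion applies. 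Keller's theorem concerns only the small orbit category; nothing you have done shows the large one is triangulated. The paper's one-line density argument via \cite{KellerYangZhou09} avoids both issues.
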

We remark that on these orbit categories $\Sigma\circ\langle-1\rangle$ is the identity on objects but as a functor it is not isomorphic to the identity, see \cite{Keller05cor}.

\begin{proof} For a graded $A$-module $X$ and $i\in\mathbb{Z}$, we define a new graded $A$-module $X_{\diamond i}$ by twisting the $A$-action on $X$:
\[
x\diamond a=(-1)^{ij}xa \text{ for } a\in A^j.
\]
In general $X'$ is not isomorphic to $X$, but there is an identification $\Hom_{\Grmod A}(X,Y)=\Hom_{\Grmod A}(X',Y')$.

For an object $M$ of $\cc_{dg}(\Grmod A)$, notice that
as a graded $A$-module $\Tot M$ is precisely the direct sum
$\bigoplus_{i\in\mathbb{Z}}(M^{i,*})_{\diamond i}\langle-i\rangle$. Therefore for
$M$ and $N$ objects in $\cc_{dg}(\Grmod A)$ we have
\begin{align*}
\Hom_{\cc_{dg}(A)}^n(\Tot M,\Tot N)&=\cHom_A^n(\Tot M,\Tot N)\\
&=\Hom_{\Grmod
A}(\Tot M,(\Tot N)\langle
n\rangle)\\
&=\Hom_{\Grmod A}(\bigoplus_{i\in\mathbb{Z}}
(M^{i,*})_{\diamond i}\langle-i\rangle,\bigoplus_{j\in\mathbb{Z}}
(N^{j,*})_{\diamond j}\langle-j\rangle\langle n\rangle)\\
&=\prod_{i\in\mathbb{Z}}\Hom_{\Grmod
A}((M^{i,*})_{\diamond i}\langle-i\rangle,\bigoplus_{j\in\mathbb{Z}}(N^{j,*})_{\diamond j}\langle n-j\rangle)\\
&=\prod_{i\in\mathbb{Z}}\Hom_{\Grmod
A}(M^{i,*}\langle-i\rangle,\bigoplus_{j\in\mathbb{Z}}N^{j,*}\langle n-j\rangle)\\
&=\prod_{i\in\mathbb{Z}}\Hom_{\Grmod
A}(M^{i,*},\bigoplus_{j\in\mathbb{Z}}N^{j,*}\langle i+n-j\rangle).
\end{align*}
On the other hand,
\begin{align*}
\Hom_{\cc_{dg}(\Grmod
A)}^n(M,\bigoplus_{p\in\mathbb{Z}}N[p]\langle-p\rangle)
&=\Hom_{\Grmod
A}(\bigoplus_{i\in\mathbb{Z}}M^{i,*},\bigoplus_{p\in\mathbb{Z}}(N[p]\langle-p\rangle)^{i+n,*})\\
&=\prod_{i\in\mathbb{Z}}\Hom_{\Grmod
A}(M^{i,*},\bigoplus_{p\in\mathbb{Z}}(N[p]\langle-p\rangle)^{i+n,*})\\
&=\prod_{i\in\mathbb{Z}}\Hom_{\Grmod
A}(M^{i,*},\bigoplus_{p\in\mathbb{Z}}N^{i+n+p,*}\langle-p\rangle)\\
&\hspace{-13pt}\stackrel{j=i+n+p}{=}\prod_{i\in\mathbb{Z}}\Hom_{\Grmod
A}(M^{i,*},\bigoplus_{j\in\mathbb{Z}}N^{j,*}\langle i+n-j\rangle).
\end{align*}
This shows that $\overline{\Tot}:\cc_{dg}(\Grmod A)\dslash[1]\circ\langle-1\rangle\to\cc_{dg}(A)$ is fully faithful. Therefore, $\Tot:\cc_{dg}(\Grmod A)\to\cc_{dg}(A)$ is $([1]\circ\langle-1\rangle)$-orbitally fully faithful, as desired.

(a) This follows from Lemma~\ref{lem:orbit-functor-from-dg-to-triangulated}.

(b) It is easy to see that $\Tot$ sends cofibrant objects in $\cc_{dg}(\Grmod A)$ to cofibrant dg $A$-modules and preserves homotopy equivalences. Therefore, it restricts to a $([1]\circ\langle-1\rangle)$-orbitally fully faithful dg functor $\cc_{dg}^{hp}(\Grmod A)\to \cc_{dg}^{hp}(A)$. Here $\cc_{dg}^{hp}(\Grmod A)$ is the dg subcategory of $\cc_{dg}(\Grmod A)$ of $\ch$-projective objects and $\cc_{dg}^{hp}(A)$ is the dg subcategory of $\cc_{dg}(A)$ of $\ch$-projective dg $A$-modules. They are dg enhancements of $\cd(\Grmod A)$ and $\cd(A)$, respectively. Thus it follows from Lemma~\ref{lem:orbit-functor-from-dg-to-triangulated} that taking $H^0$ yields (b).

(c) We view $\ch^b(\Grproj A)$ as a triangulated subcategory of $\cd(\Grmod A)$. For objects $M,N\in\ch^b(\Grproj A)$, the space $\Hom_{\ch^b(\Grproj A)}(M,\Sigma^p N\langle-p\rangle)$ vanishes for almost all $p\in\mathbb{Z}$. Therefore the embedding $\ch^b(\Grproj A)\to \cd(\Grmod A)$ induces a fully faithful functor $\ch^b(\Grproj A)/\Sigma\circ\langle-1\rangle\to\cd(\Grmod A)\dslash\Sigma\circ\langle-1\rangle$. Now the first statement follows from (b). If $A$ has finite global dimension, then $\cd^b(\Grmod A)=\ch^b(\Grproj A)$ as a triangulated subcategories of $\cd(\Grmod A)$, which implies the second statement.

(d) It follows from (c) that the induced functor $\ch^b(\grproj A)/\Sigma\circ\langle-1\rangle\to\per(A)$ is fully faithful. The image contains $A$ and hence generates $\per(A)$.

(e) Suppose that $A$ is graded hereditary. Consider the following commutative diagram of fully faithful triangle functors
\[
\xymatrix{
\cd(\Grmod A)\dslash\Sigma\circ\langle-1\rangle \ar[r]^(0.7){\overline{\Tot}}& \cd(A).\\
\cd^b(\Grmod A)/\Sigma\circ\langle-1\rangle\ar[u]\ar[ur]
}
\]
By~\cite[Theorem 3.1]{KellerYangZhou09}, the functor $H^*:\cd(A)\rightarrow\Grmod A$ of taking total cohomology induces a bijection on the sets of isomorphism classes of objects. It follows that any object in $\cd(A)$ is isomorphic to its total cohomology, which is viewed as dg $A$-module with trivial differential. Thus all the above three functors are dense, and hence are equivalences. The equivalence $\overline{\Tot}:\cd(\Grmod A)\dslash\Sigma\circ\langle-1\rangle\to\cd(A)$ restricts to an equivalence $\ch^b(\grproj A)/\Sigma\circ\langle-1\rangle\to\per(A)$ on the full subcategories of compact objects (Lemma~\ref{lem:compact-objects-in-orbit-cat} and Theorem~\ref{t:compact-generator}). Since by definition every dg module in $\cd_{fd}(A)$ has finite-dimensional total cohomology, it follows that the restriction $\cd^b(\grmod_0 A)/\Sigma\circ\langle-1\rangle\to\cd_{fd}(A)$ is also an equivalence.

Finally, because $A$ is graded hereditary, the category $\grmod A$ is abelian, and the canonical functor $\ch^b(\grproj A)\to\cd^b(\grmod A)$ is a triangle equivalence. Therefore we have an equivalence $\cd^b(\grmod A)/\Sigma\circ\langle-1\rangle\to\per(A)$.
\end{proof}

\begin{remark}
For bounded complexes $M$ and $N$ in $\cc_{dg}(\Grmod A)$, we actually have
\begin{align*}
\Hom_{\cc_{dg}(\Grmod
A)\dslash\Sigma\circ\langle-1\rangle}(M,N)&=\bigoplus_{p\in\mathbb{Z}}\Hom_{\cc_{dg}(\Grmod
A)}(M,N[p]\langle-p\rangle)\\
&\cong\Hom_{\cc_{dg}(A)}(\Tot M,\Tot N).
\end{align*}
\end{remark}

The orbit category $\ch^b(\grproj
A)/\Sigma\circ\langle-1\rangle$ is in general not triangulated. For
an example, let $A$ be the path algebra of the graded quiver
\[\xymatrix{1\ar@<.7ex>[r]^{\alpha}&2\ar@<.7ex>[l]^{\beta}}\] modulo the ideal generated by
the path $\alpha\beta$, where $\deg(\alpha)=1$ and $\deg(\beta)=0$. We claim that $\ch^b(\grproj A)$ ($\simeq\cd^b(\grmod_0 A)$) is triangle equivalent to $\cd^b(\rep Q)$, where $Q$ is the quiver of type $A^\infty_\infty$ with alternative orientation. In particular, $\ch^b(\grproj A)$ and hence $\ch^b(\grproj
A)/\Sigma\circ\langle-1\rangle$ are discrete. On the contrary, by Proposition~\ref{p:A-L-derived-graded-here}, $A$
is derived equivalent to the path algebra $B$ of the Kronecker
quiver, implying that
$\per(A)\simeq\per(B)$ is tame. The difference between $\ch^b(\grproj A)/\Sigma\circ\langle-1\rangle$
and its triangulated hull $\per(A)$ is surprisingly large. This phenomenon was observed by Amiot--Oppermann in the case of cluster categories,
see~\cite{AmiotOppermann13}.

Let us prove the claim. By covering theory we have $\grmod_0 A\simeq\mod C$, where is $C$ is the quotient of the path category
of the quiver
\[\xymatrix@C=2pc{\ar@{.}[r]&\cdot\ar[r]^{\beta_{i-1}} & 1^{i-1}\ar[r]^{\alpha_{i-1}} & 2^i\ar[r]^{\beta_i}& 1^i\ar[r]^{\alpha_i}&\cdot\ar@{.}[r]&}\]
modulo the ideal generated by $\alpha_i\beta_i$, $i\in\mathbb{Z}$.
Let $T^i_1=\Sigma^{-i-1}S_{1^i}$ and $T^i_2=\Sigma^{-i}P_{2^i}$, where $S_{1^i}$ is the simple module at $1^i$ and $P_{2^i}$ is the projective module at $2^i$. Then $\ct=\{T^i_1,T^i_2|i\in\mathbb{Z}\}$ is a
tilting subcategory of $\cd^b(\grmod_0 A)$. So $\cd^b(\grmod_0 A)\simeq\cd^b(\mod\ct)$. It is easy to see that $\ct$ is isomorphic to the
path category of $Q$, and consequently $\mod\ct\simeq\rep Q$. Therefore $\cd^b(\grmod_0 A)\simeq\cd^b(\rep Q)$.

\medskip
For a bounded complex $M$ of graded $A$-modules, we form the graded dg endomorphism algebra $\mathcal{GE}_{A}(M)$, which is a graded dg algebra, by putting
\[
\mathcal{GE}_{A}(M)^{ij}=\cHom_{\Grmod A}^i(M,M\langle j\rangle).
\]

\begin{lemma}\label{lem:from-graded-dg-endo-alg-to-dg-endo-alg}
There is an isomorphism of dg algebras $\cEnd_A(\Tot M)\cong\Tot\mathcal{GE}_A(M)$.
\end{lemma}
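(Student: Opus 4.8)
The plan is to factor the desired isomorphism through the large orbit category, so that the structural content (full faithfulness) comes for free from Theorem~\ref{thm:tot-is-orbital} and only a sign reconciliation is left. The key observation is that $\Tot M=\overline{\Tot}(M)$, where $\overline{\Tot}:\cc_{dg}(\Grmod A)\dslash[1]\circ\langle-1\rangle\to\cc_{dg}(A)$ is the dg functor of Theorem~\ref{thm:tot-is-orbital}. That theorem asserts $\overline{\Tot}$ is fully faithful as a dg functor, i.e.\ it induces isomorphisms on Hom-complexes; since a dg functor preserves composition and units, it therefore induces an isomorphism of dg algebras
\[
\End_{\cc_{dg}(\Grmod A)\dslash[1]\circ\langle-1\rangle}(M)\iso \cEnd_A(\Tot M).
\]
(The underlying isomorphism of complexes is exactly the one recorded, for general $N$, in the Remark following Theorem~\ref{thm:tot-is-orbital}, specialized to $N=M$.) This reduces the lemma to identifying the left-hand side with $\Tot\mathcal{GE}_A(M)$ as dg algebras.

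Next I would match the underlying graded vector spaces. Because $M$ is a bounded complex, the direct sum splits out of $M$, giving $\End_{\cc_{dg}(\Grmod A)\dslash[1]\circ\langle-1\rangle}(M)=\bigoplus_{p\in\mathbb{Z}}\cHom_{\Grmod A}(M,M[p]\langle-p\rangle)$. Shifting the complex grading by $p$ yields
\[
\cHom^n_{\Grmod A}(M,M[p]\langle-p\rangle)=\cHom^{n+p}_{\Grmod A}(M,M\langle-p\rangle)=\mathcal{GE}_A(M)^{n+p,\,-p}.
\]
Collecting these over $p$ and setting $i=n+p$, $j=-p$, the degree-$n$ part becomes $\bigoplus_{i+j=n}\mathcal{GE}_A(M)^{ij}=(\Tot\mathcal{GE}_A(M))^n$. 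This is precisely the computation carried out in the proof of Theorem~\ref{thm:tot-is-orbital}, so I would simply invoke it rather than repeat it.

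It remains to upgrade this bijection to an isomorphism of dg algebras, and this is where the real work lies: three sign conventions must be reconciled, namely the sign $(-1)^n$ attached to the complex-shift dg functor $[1]$, the twist $d_{\Tot}=(-1)^j d$ in the differential of a totalization, and the twisted multiplication $a*a'=(-1)^{ij'}aa'$. I would define the comparison map by $\phi\mapsto(-1)^{j(j-1)/2}\phi$ for $\phi\in\mathcal{GE}_A(M)^{ij}$ and check the two compatibilities by tracking signs. For the differential: on $\cHom_{\Grmod A}(M,M[p]\langle-p\rangle)$ the orbit differential uses $d_{M[p]\langle-p\rangle}=(-1)^p d_M=(-1)^{-j}d_M$ in its first term and $(-1)^{|h|}=(-1)^{i+j}$ in its second, which matches $(-1)^j d_{\mathcal{GE}_A(M)}(\phi)=(-1)^j d_M\phi-(-1)^{i+j}\phi\, d_M$ on the nose; since the sign $(-1)^{j(j-1)/2}$ depends only on $j$ and the differential fixes $j$, it is automatically compatible. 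For the multiplication: applying the shift dg functor $\Phi^{p_f}=[1]^{p_f}\langle-p_f\rangle$ to the left factor introduces a sign $(-1)^{\deg(g)\,p_f}=(-1)^{(i_g+j_g)j_f}$, which differs from the twisted product sign $(-1)^{i_g j_f}$ by exactly $(-1)^{j_g j_f}$; the identity $\binom{j_g+j_f}{2}-\binom{j_g}{2}-\binom{j_f}{2}=j_g j_f$ shows the twist $(-1)^{j(j-1)/2}$ absorbs precisely this discrepancy, making the map multiplicative. It is clearly unital (the twist is trivial in bidegree $(0,0)$) and bijective, hence an isomorphism of dg algebras; composing with the first-stage isomorphism gives $\cEnd_A(\Tot M)\cong\Tot\mathcal{GE}_A(M)$.

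The main obstacle is this final sign bookkeeping: determining the correct bidegree-dependent twist and verifying simultaneously that it respects both the differential and the (twisted) multiplication. Everything else — the vector-space identification and the fact that the comparison respects the dg structure on the orbit side — is supplied by Theorem~\ref{thm:tot-is-orbital}, so the proof is essentially a careful tally of Koszul signs rather than a new construction.
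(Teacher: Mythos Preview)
Your proposal is correct and follows essentially the same route as the paper. The paper also factors through the orbit category via Theorem~\ref{thm:tot-is-orbital} and reduces to a sign reconciliation; it packages that reconciliation by introducing an auxiliary totalization $\Tot'\tilde{A}$ with twisted product $a\star a'=(-1)^{(i+j)j'}aa'$, observes that the orbit endomorphism algebra is exactly $\Tot'\mathcal{GE}_A(M)$, and then records the isomorphism $\Tot\tilde{A}\cong\Tot'\tilde{A}$ via $a\mapsto(-1)^{j(j+1)/2}a$ --- which is your twist up to an irrelevant factor of $(-1)^j$.
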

\begin{proof} For a graded dg algebra $\tilde{A}$,
denote by $\Tot' \tilde{A}$ the dg algebra obtained from $\tilde{A}$ by
taking the total complex:
\[
(\Tot' \tilde{A})^n=\bigoplus_{i,j:i+j=n}\tilde{A}^{ij} \text{ and } d_{\Tot' \tilde{A}}(a)=(-1)^j d(a)
\]
with a twisted
multiplication:
\[a\star a'=(-1)^{(i+j)j'}aa'\]
for $a$ and $a'$ homogeneous  of bidegree $(i,j)$ and $(i',j')$, respectively. One checks that the morphism $a\mapsto (-1)^{\frac{j(j+1)}{2}}a$ is an isomorphism of dg algebras $\Tot \tilde{A}\cong \Tot'\tilde{A}$.

Now it is straightforward to check that \begin{align*}
\End_{\cc_{dg}(\Grmod
A)\dslash[1]\circ\langle-1\rangle}(M)=\bigoplus_{p\in\mathbb{Z}}\Hom_{\cc_{dg}(\Grmod
A)}(M,M[p]\langle-p\rangle)
\end{align*}
is $\Tot'\mathcal{GE}_A(M)$. It follows that $\cEnd_A(\Tot M)$, being isomorphic to $\End_{\cc_{dg}(\Grmod
A)\dslash\Sigma\circ\langle-1\rangle}(M)$ by Theorem~\ref{thm:tot-is-orbital}, is isomorphic to $\Tot \mathcal{GE}_A(M)$.
\end{proof}


\section{Derived equivalences of graded algebras}\label{s:derived-equi-graded-alg}

In this section we study derived equivalences of graded algebras, viewed as dg algebras with trivial differential.

\subsection{From graded derived equivalence to derived equivalence}
Let $A$ be a graded algebra and let $FA$ denote the algebra obtained from $A$ by forgetting the
grading. We also have the forgetful functor on graded modules
\[F:\Grmod A\longrightarrow \Mod FA.\]

Let $A$ and $B$ be two graded algebras. We say that $A$ and $B$ are \emph{graded equivalent} if there is a decomposition $A=P_1\oplus\ldots\oplus P_n$ and integers $a_1,\ldots,a_n$ such that as a graded algebra $B$ is isomorphic to $\bigoplus_{i\in\mathbb{Z}}\Hom_{\Grmod A}(P_1\langle a_1\rangle\oplus\ldots\oplus P_n\langle a_n\rangle,(P_1\langle a_1\rangle\oplus\ldots\oplus P_n\langle a_n\rangle)\langle i\rangle)$, they are
\emph{graded derived equivalent} if there is a triangle equivalence
$\cd(\Grmod A)\simeq\cd(\Grmod B)$ commuting with the degree
shiftings, and they are \emph{derived equivalent} if $\cd(A)$ and
$\cd(B)$ are triangle equivalent (here $A$ and $B$ are considered as
dg algebras with trivial differential).

Graded derived equivalence admits an analogue of Rickard's theorem \cite{Rickard89,Rickard91}.
An object $T$ in $\cd(\Grmod A)$ is called a \emph{graded tilting complex} if  $\{T\langle i\rangle\}_{i\in\mathbb{Z}}$ is a set of generators of $\ch^b(\grproj A)$ and $\bigoplus_{i\in\mathbb{Z}}\Hom_{\cd(\Grmod A)}(T,\Sigma^n T\langle i\rangle)=0$ unless $n=0$. This definition is different from the one in~\cite[Section 2.2]{Marcus03}. That they are equivalent is hidden in the proof of \cite[Theorem 2.4]{Marcus03}.

\begin{theorem}\label{t:graded-rickard's-theorem}\emph{(\cite[Theorem 2.4]{Marcus03})}
Let $A$ and $B$ be two graded algebras. Then the following statements are equivalent:
\begin{itemize}
\item[(i)] $A$ and $B$ are graded derived equivalent;
\item[(ii)] there is a graded tilting complex $T$ over $A$  whose graded endomorphism algebra $\bigoplus_{i\in\mathbb{Z}}\Hom_{\cd(\Grmod A)}(T,T\langle i\rangle)$ is isomorphic as a graded algebra to $B$;
\item[(iii)] there is a complex of graded $B$-$A$-bimodules $T$ such that the triangle functor $?\lten_B T:\cd(\Grmod B)\rightarrow\cd(\Grmod A)$ is an equivalence.
\item[(iv)]  there is a complex of graded $B$-$A$-bimodules $T$ such that the triangle functor $?\lten_{FB} FT:\cd(FB)\rightarrow\cd(FA)$ is an equivalence.
\end{itemize}
\end{theorem}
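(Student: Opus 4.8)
The plan is to close the cycle of implications $(i)\Rightarrow(ii)\Rightarrow(iii)\Rightarrow(i)$ among the three graded statements, and then to prove $(iii)\Leftrightarrow(iv)$ by transporting the tilting data along the forgetful functor. The two engines of the argument are the dg Morita theory of Proposition~\ref{p:derived-equiv} and the formality criterion of Lemma~\ref{l:transfering-formalness-graded}(b); together they let me bypass the hands-on construction of a two-sided tilting complex in Rickard~\cite{Rickard89,Rickard91} and instead read the bimodule off a formal dg endomorphism algebra.

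The implication $(iii)\Rightarrow(i)$ is immediate, since $?\lten_B T$ commutes with the degree shifts $\langle i\rangle$, so a bimodule equivalence is in particular a graded derived equivalence. For $(i)\Rightarrow(ii)$ I would take a graded derived equivalence $G\colon\cd(\Grmod B)\to\cd(\Grmod A)$ and put $T=G(B)$. As $G$ commutes with $\Sigma$ and with the $\langle i\rangle$ and preserves compact objects, $\{T\langle i\rangle\}_{i\in\mathbb{Z}}$ generates $\ch^b(\grproj A)$, while $\bigoplus_i\Hom_{\cd(\Grmod A)}(T,\Sigma^nT\langle i\rangle)\cong\bigoplus_i\Hom_{\cd(\Grmod B)}(B,\Sigma^nB\langle i\rangle)$ vanishes for $n\neq0$ and recovers $B$ for $n=0$, so $T$ is a graded tilting complex with graded endomorphism algebra $B$.

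The heart of the proof, and the step I expect to be the main obstacle, is $(ii)\Rightarrow(iii)$. I would first replace $T$ by an $\ch$-projective resolution $\mathbf{p}T$ in $\cc_{dg}(\Grmod A)$, so that the graded dg endomorphism algebra $\mathcal{GE}_A(\mathbf{p}T)$ of Lemma~\ref{lem:from-graded-dg-endo-alg-to-dg-endo-alg} computes derived graded Homs, $H^{i,j}\mathcal{GE}_A(\mathbf{p}T)=\Hom_{\cd(\Grmod A)}(T,\Sigma^iT\langle j\rangle)$. Hypothesis $(ii)$ says precisely that this cohomology is concentrated in complex-degree $0$, where as a graded algebra it equals $B$; equivalently, the dg algebra obtained by forgetting the Adams grading has cohomology concentrated in degree $0$. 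Lemma~\ref{l:transfering-formalness-graded}(b) then forces $\mathcal{GE}_A(\mathbf{p}T)$ to be a formal graded dg algebra, linked to $B$ by a zigzag of quasi-isomorphisms of graded dg algebras. This equips $\mathbf{p}T$ with the structure of a complex of graded $B$-$A$-bimodules up to quasi-isomorphism, and a many-objects version of Proposition~\ref{p:derived-equiv}(a)---realising $\cd(\Grmod A)$ as the derived category of the dg category on the compact generators $\{\mathbf{p}T\langle i\rangle\}_{i}$, whose endomorphism dg category has $H^0$ equal to $B$ with its grading---upgrades this to the triangle equivalence $?\lten_B T\colon\cd(\Grmod B)\to\cd(\Grmod A)$. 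Compactness and generation of the family $\{\mathbf{p}T\langle i\rangle\}_i$ are supplied by the tilting condition through Theorem~\ref{thm:compact-objects}. The delicate points are that Keller's theorem must be applied to this whole generating \emph{family} rather than to a single generator, and that the formality transport has to be performed compatibly with the shift $\langle1\rangle$.

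For $(iii)\Leftrightarrow(iv)$ I would exploit the parallel between the forgetful functor and $\Tot$: exactly as in Theorem~\ref{thm:tot-is-orbital}, $F\colon\cd(\Grmod A)\to\cd(\Mod FA)$ is $\langle1\rangle$-orbitally fully faithful, giving $\Hom_{\cd(\Mod FA)}(FX,\Sigma^nFY)=\bigoplus_i\Hom_{\cd(\Grmod A)}(X,\Sigma^nY\langle i\rangle)$. Taking $X=Y=T$, this identity turns the graded orthogonality of $T$ into the ordinary self-orthogonality of $FT$ and sends the graded endomorphism algebra $B$ to $FB$; simultaneously, Lemma~\ref{lem:compact-generator-of-orbit-category} (with generators $A\langle i\rangle$ and $F(A)=FA$) shows that $\{T\langle i\rangle\}_i$ generates $\ch^b(\grproj A)$ if and only if $FT$ generates $\per(FA)$. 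Thus $FT$ is an ordinary tilting complex over $FA$ with endomorphism algebra $FB$ precisely when $T$ is a graded tilting complex with graded endomorphism algebra $B$; combining this with the already established $(ii)\Leftrightarrow(iii)$ and the ordinary theorem of Rickard for $FA$ and $FB$ yields $(iii)\Leftrightarrow(iv)$. Alternatively, one may simply invoke the portion of~\cite[Theorem~2.4]{Marcus03} recording this compatibility, as remarked after Theorem~\ref{t:main-thm-2}.
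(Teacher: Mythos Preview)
The paper does not prove this theorem at all: it is quoted verbatim from \cite[Theorem~2.4]{Marcus03}, with only the remark that the two notions of graded tilting complex agree. So there is no ``paper's proof'' to compare against; what you have written is an independent proof sketch, and it is worth assessing on its own terms.

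Your route via Lemma~\ref{l:transfering-formalness-graded}(b) for $(ii)\Rightarrow(iii)$ is a genuine alternative to Rickard's original two-sided construction and is very much in the spirit of this paper. It works, but the sentence ``This equips $\mathbf{p}T$ with the structure of a complex of graded $B$-$A$-bimodules up to quasi-isomorphism'' hides a step you should make explicit: the formality zigzag produced by Lemma~\ref{l:transfering-formalness-graded}(b) has the shape $\mathcal{GE}_A(\mathbf{p}T)\hookleftarrow \sigma^{\leq 0}\mathcal{GE}_A(\mathbf{p}T)\twoheadrightarrow B$, so one must first restrict the left action along the inclusion and then \emph{induce} along the projection, replacing $\mathbf{p}T$ by $B\lten_{\sigma^{\leq 0}\mathcal{GE}_A(\mathbf{p}T)}\mathbf{p}T$; only then does one have an honest complex of graded $B$-$A$-bimodules, and one must still observe that this is quasi-isomorphic to $\mathbf{p}T$ over $A$.

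For $(iii)\Leftrightarrow(iv)$ your plan mirrors what the paper itself does for $\Tot$ in the proof of Theorem~\ref{thm:der-equiv-on-three-levels}, and that parallel is exactly right. The gap is that you assert ``exactly as in Theorem~\ref{thm:tot-is-orbital}, $F$ is $\langle 1\rangle$-orbitally fully faithful'' without proof; the paper establishes this only for $\Tot$, not for $F$. The statement for $F$ is true and in fact easier (no sign twists are needed: a graded map $X\to\bigoplus_i Y\langle i\rangle$ is the same datum as an $FA$-linear map $FX\to FY$, since any element of $FY$ has finitely many homogeneous components), but you should supply that one-line argument rather than invoke it by analogy. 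Once you do, Lemma~\ref{lem:compact-generator-of-orbit-category} applies verbatim and your deduction of $(iii)\Leftrightarrow(iv)$ via the ungraded Rickard theorem goes through.
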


The complexes $T$ in (iii) and (iv) can be taken the same complex. It is clear that graded equivalent graded algebras are graded derived equivalent. We remind the reader that there exist triangle equivalences of graded module categories which do not commute with the degree shiftings. See~\cite[Example 8.8]{AmiotOppermann14} for examples.

For a graded algebra $A$, it is known that $\{A\langle i\rangle|i\in\mathbb{Z}\}$ is a set of compact generators of $\cd(\Grmod A)$. Recall from Theorem~\ref{t:compact-generator} that $A$ is a compact generator of $\cd(A)$.

\begin{theorem}\label{thm:der-equiv-on-three-levels}
 Let $A$ and $B$ be two graded algebras, and let $T$ be a complex of graded $B$-$A$-bimodules. The following statements are
equivalent
\begin{itemize}
\item[(i)] $?\lten_B T:\cd(\Grmod B)\rightarrow\cd(\Grmod A)$ is a graded triangle equivalence,
\item[(ii)] $?\lten_B \Tot T:\cd(B)\rightarrow\cd(A)$ is a triangle equivalence,
\item[(iii)] $?\lten_{FB} FT:\cd(FB)\rightarrow\cd(FA)$ is a triangle equivalence.
\end{itemize}
\end{theorem}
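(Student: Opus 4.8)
The plan is to treat the equivalence of (i) and (iii) as known and to concentrate on $(i)\Leftrightarrow(ii)$. Indeed $?\lten_B T$ and $?\lten_{FB}FT$ are exactly the functors in conditions (iii) and (iv) of Theorem~\ref{t:graded-rickard's-theorem}; since $?\lten_B T$ commutes with the degree shift $\langle1\rangle$ (because $M\langle i\rangle\lten_B T=(M\lten_B T)\langle i\rangle$), it is a triangle equivalence precisely when it is a \emph{graded} triangle equivalence, so $(i)\Leftrightarrow(iii)$ is the equivalence of (iii) and (iv) in \cite[Theorem 2.4]{Marcus03}. To prove $(i)\Leftrightarrow(ii)$ I would use the dg Morita characterization (\cite{Keller94}): a tensor functor $?\lten_B Y:\cd(B)\to\cd(A)$, and likewise its graded analogue $\cd(\Grmod B)\to\cd(\Grmod A)$, is a triangle equivalence if and only if (A) the image of the canonical set of compact generators (that is, $Y$ together with its degree shifts in the graded case, and $\Tot T$ alone in the dg case) is again a set of compact generators of the target, and (B) the structure morphism from $B$ to the derived endomorphism dg algebra of $Y$ is a quasi-isomorphism. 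Since $B\langle i\rangle\lten_B T=T\langle i\rangle$ and $B\lten_B\Tot T=\Tot T$, it then suffices to match the two generation conditions (A) and the two endomorphism conditions (B) separately.

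For the generation conditions I set $\Phi=\Sigma\circ\langle-1\rangle$, a triangle auto-equivalence of $\cd(\Grmod A)$. Taking $F=\Tot:\cd(\Grmod A)\to\cd(A)$, which is $\Phi$-orbitally fully faithful by Theorem~\ref{thm:tot-is-orbital}(b), and the distinguished object $A$, the hypotheses of Lemma~\ref{lem:compact-generator-of-orbit-category} are met: $\{\Phi^pA=\Sigma^pA\langle-p\rangle\mid p\in\mathbb{Z}\}$ is a set of compact generators of $\cd(\Grmod A)$, and $\Tot A=A$ is a compact generator of $\cd(A)$ by Theorem~\ref{t:compact-generator}. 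A shift argument (applying $\Sigma^{-p}$) shows $\thick(\Phi^pX\mid p\in\mathbb{Z})=\thick(X\langle i\rangle\mid i\in\mathbb{Z})$, and similarly for $\Tria$, so that $\{T\langle i\rangle\mid i\in\mathbb{Z}\}$ is a set of compact generators of $\cd(\Grmod A)$ if and only if $\{\Phi^pT\mid p\in\mathbb{Z}\}$ is. Lemma~\ref{lem:compact-generator-of-orbit-category}, applied with $X=T$, identifies the latter with the assertion that $\Tot T$ is a compact generator of $\cd(A)$. This is exactly the matching of condition (A) for $?\lten_B T$ with condition (A) for $?\lten_B\Tot T$.

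For the endomorphism conditions I would first replace $T$ by an $\ch$-projective representative over $A$, which alters neither derived functor and, since $\Tot$ preserves $\ch$-projectivity as in the proof of Theorem~\ref{thm:tot-is-orbital}(b), keeps $\Tot T$ an $\ch$-projective dg $A$-module. Then $\mathcal{GE}_A(T)$ computes the graded derived endomorphism dg algebra of $T$ and $\cEnd_A(\Tot T)$ computes the derived endomorphism dg algebra of $\Tot T$, while Lemma~\ref{lem:from-graded-dg-endo-alg-to-dg-endo-alg} provides an isomorphism of dg algebras $\cEnd_A(\Tot T)\cong\Tot\mathcal{GE}_A(T)$ compatible with the structure morphisms out of $B=\Tot B$. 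Because $\Tot$ detects quasi-isomorphisms, the graded structure morphism $B\to\mathcal{GE}_A(T)$ is a quasi-isomorphism if and only if its totalization $B\to\cEnd_A(\Tot T)$ is, which matches condition (B) on the two sides. Combining the two matchings gives $(i)\Leftrightarrow(ii)$, and together with $(i)\Leftrightarrow(iii)$ this proves the theorem.

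The main obstacle is the endomorphism matching. Lemma~\ref{lem:from-graded-dg-endo-alg-to-dg-endo-alg} is stated for a bounded complex of graded modules, so one must check that it, or at least its effect on the structure morphisms, survives the passage to an $\ch$-projective resolution; and one must verify that the Koszul signs introduced by totalization — both in the twisted multiplication defining $\Tot$ and in the auxiliary $\Tot'$ appearing in the proof of that lemma — are compatible with the $B$-actions, so that the totalized map is genuinely the structure morphism $B\to\cEnd_A(\Tot T)$. Conceptually all of this is underpinned by the compatibility $\Tot(M\lten_B T)\cong\Tot M\lten_B\Tot T$ of totalization with the derived tensor product, and the delicate point throughout is the sign bookkeeping rather than any homological difficulty.
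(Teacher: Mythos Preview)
Your overall strategy matches the paper's: cite \cite[Theorem 2.4]{Marcus03} for (i)$\Leftrightarrow$(iii), and for (i)$\Leftrightarrow$(ii) reduce via \cite[Lemma 4.2]{Keller94} to matching a compact-generation condition and an endomorphism condition. Your treatment of the generation condition (A) through Lemma~\ref{lem:compact-generator-of-orbit-category} and Theorem~\ref{thm:tot-is-orbital} is exactly what the paper does.

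The difference lies in condition (B). You phrase it as a quasi-isomorphism of dg endomorphism algebras and invoke Lemma~\ref{lem:from-graded-dg-endo-alg-to-dg-endo-alg}, which forces you to confront the boundedness hypothesis of that lemma and the sign compatibility of the structure morphisms --- obstacles you correctly flag. The paper sidesteps all of this by staying at the level of derived Hom spaces: it phrases (B) as the requirement that the maps
\[
\Hom_{\cd(\Grmod B)}(B,\Sigma^i B\langle -p\rangle)\longrightarrow\Hom_{\cd(\Grmod A)}(T,\Sigma^i T\langle-p\rangle)
\]
be isomorphisms for all $i,p$, and then applies Theorem~\ref{thm:tot-is-orbital}(b) directly. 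Orbital full faithfulness of $\Tot$ on the derived categories identifies $\Hom_{\cd(A)}(\Tot T,\Sigma^j\Tot T)$ with $\bigoplus_p\Hom_{\cd(\Grmod A)}(T,\Sigma^{j+p}T\langle-p\rangle)$ (and likewise over $B$); since $?\lten_B T$ commutes with the degree shift the induced map respects this decomposition, so after reindexing the dg-side condition for all $j$ is equivalent to the graded-side condition for all $i,p$. This route needs no resolutions, no boundedness assumption, and no sign bookkeeping, which is what it buys over yours; your argument has the merit of making the dg-algebra comparison explicit, but at the cost of the technical checks you identify.
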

\begin{proof} The equivalence between (i) and (iii) is part of Theorem~\ref{t:graded-rickard's-theorem}. We prove that (i) and (ii) are equivalent. By \cite[Lemma 4.2]{Keller94} we have
\begin{eqnarray*}
\lefteqn{?\lten_B T:\cd(\Grmod B)\to \cd(\Grmod A)\text{ is a triangle equivalence}}\\
&\Leftrightarrow& \{\Sigma^pT\langle-p\rangle|p\in\mathbb{Z}\} \text{ is a set of compact generators of } \cd(\Grmod A) \text{ and }  \\
&&\Hom_{\cd(\Grmod B)}(B,\Sigma^i B\langle -p\rangle)\to\Hom_{\cd(\Grmod A)}(T,\Sigma^i T\langle-p\rangle)\text{ is an isomorphism}\\
&&\text{for any } i,p\in\mathbb{Z}\\
&\Leftrightarrow& \Tot T \text{ is a compact generator of } \cd(A) \text{ and } \\ && \Hom_{\cd(B)}(B,\Sigma^i B)\to\Hom_{\cd(A)}(\Tot T,\Sigma^i \Tot T) \text{ is an isomorphism for any }i\in\mathbb{Z}\\
&\Leftrightarrow& ?\lten_B \Tot T:\cd(B)\rightarrow\cd(A) \text{ is a triangle equivalence.}
\end{eqnarray*}
Here the second `$\Leftrightarrow$' uses Lemma~\ref{lem:compact-generator-of-orbit-category} and Theorem~\ref{thm:tot-is-orbital}.
\end{proof}

In view of Theorem~\ref{t:graded-rickard's-theorem} and
Theorem~\ref{thm:der-equiv-on-three-levels} we have an easy and useful corollary.

\begin{corollary}\label{c:grade-derived-equiv-induce-derived-equiv}
Let $A$ and $B$ be two graded algebras. If they are graded derived equivalent, then they are derived equivalent. In particular, if $FA$ and $FB$ are derived equivalent via a gradable tilting complex, then $A$ and $B$ are derived equivalent.
\end{corollary}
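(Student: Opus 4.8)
The plan is to derive the corollary as an essentially formal consequence of Theorem~\ref{t:graded-rickard's-theorem} and Theorem~\ref{thm:der-equiv-on-three-levels}; the only substantive point is to manufacture, in each case, a complex of graded $B$-$A$-bimodules to which the chain of equivalences in Theorem~\ref{thm:der-equiv-on-three-levels} can be applied, and then to read off the dg-level equivalence from the functor $?\lten_B\Tot T$.

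For the first assertion, suppose $A$ and $B$ are graded derived equivalent. First I would promote this abstract equivalence to a bimodule description: by the equivalence (i)$\Leftrightarrow$(iii) of Theorem~\ref{t:graded-rickard's-theorem}, there is a complex $T$ of graded $B$-$A$-bimodules such that $?\lten_B T:\cd(\Grmod B)\to\cd(\Grmod A)$ is an equivalence. Since derived tensoring with a bimodule complex commutes with the degree shift $\langle 1\rangle$, this is exactly condition (i) of Theorem~\ref{thm:der-equiv-on-three-levels}. Applying the implication (i)$\Rightarrow$(ii) of that theorem then yields that $?\lten_B\Tot T:\cd(B)\to\cd(A)$ is a triangle equivalence, whence $A$ and $B$ are derived equivalent. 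Note that this implication is precisely where the work of the earlier sections enters, via Theorem~\ref{thm:tot-is-orbital} and Lemma~\ref{lem:compact-generator-of-orbit-category}.

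For the second assertion, which is stated as a special case, I would interpret ``$FA$ and $FB$ are derived equivalent via a gradable tilting complex'' as condition (iv) of Theorem~\ref{t:graded-rickard's-theorem}: there is a complex $T$ of graded $B$-$A$-bimodules such that $?\lten_{FB} FT:\cd(FB)\to\cd(FA)$ is a triangle equivalence. The equivalence (iv)$\Leftrightarrow$(i) of Theorem~\ref{t:graded-rickard's-theorem} then says that $A$ and $B$ are graded derived equivalent, so the first part of the corollary applies directly to give the conclusion. (Equivalently and more economically, one can feed this same $T$ into the implication (iii)$\Rightarrow$(ii) of Theorem~\ref{thm:der-equiv-on-three-levels} to obtain the triangle equivalence $?\lten_B\Tot T:\cd(B)\to\cd(A)$ without passing through graded derived equivalence.)

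The only genuinely delicate point, and hence the main obstacle, is matching the informal phrase ``gradable tilting complex'' to the precise hypothesis of condition (iv): one must know that a gradable one-sided tilting complex actually lifts to a graded two-sided bimodule complex $T$ whose forgetful image $FT$ induces the given derived equivalence, with the grading on the endomorphism algebra matching $A$. This compatibility is exactly what Theorem~\ref{t:graded-rickard's-theorem} packages, so once the word ``gradable'' is pinned down to mean the existence of such a $T$, no further argument is needed. Everything remaining is bookkeeping: the automatic compatibility of $?\lten_B(-)$ with $\langle 1\rangle$, and the identification of $\Tot T$ as the bimodule complex inducing the equivalence on the dg level.
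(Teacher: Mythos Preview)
Your proposal is correct and is exactly the argument the paper intends: the paper gives no explicit proof, merely stating that the corollary follows ``in view of Theorem~\ref{t:graded-rickard's-theorem} and Theorem~\ref{thm:der-equiv-on-three-levels}'', and your write-up is a faithful unpacking of that sentence. Your identification of the only interpretive point---that ``gradable tilting complex'' is to be read as condition (iv) of Theorem~\ref{t:graded-rickard's-theorem}---is also on the mark.
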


\subsection{Grading change}
In this subsection, we study the following question: if two graded algebras $A$ and $B$ are derived equivalent and if we change the grading on $A$ to obtain a new graded algebra $A'$, is there a graded algebra $B'$ obtained from $B$ by changing the grading so that $A'$ and $B'$ are derived equivalent? We will give a partial answer in Proposition~\ref{p:derived-equiv-graded-alg-4}.
\smallskip

Let $A$ be a bigraded algebra, and let $\Grmod A$ denote the category of bigraded $A$-modules. We fix a bounded complex $M$ of finitely generated bigraded
projective $A$-modules, and form the bigraded dg endomorphism algebra
$\mathcal{BE}_A(M)$
\[(\mathcal{BE}_A(M))^{ijl}=\cHom_{\Grmod A}^i(M,M\langle (j,l)\rangle).\]
For $\varepsilon\in\{1,2\}$, $F_\varepsilon M$ is a bounded complex of finitely generated graded projective modules over the graded algebra $F_\varepsilon A$.

\begin{lemma}\label{lem:from-bigraded-dg-endo-alg-to-graded-dg-endo-alg-to-dg-endo-alg}
Let $\varepsilon\in\{1,2\}$. As a graded dg algebra
\[
\mathcal{GE}_{F_\epsilon A}(F_\epsilon M)=F_\varepsilon \mathcal{BE}_A(M)
\]
and as a dg algebra
\[
\cEnd_{F_\varepsilon A}(\Tot F_\varepsilon
M)\cong\Tot\mathcal{GE}_{F_\epsilon A}(F_\epsilon M)=\Tot F_\varepsilon\mathcal{BE}_A(M).
\]
\end{lemma}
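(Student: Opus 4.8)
The plan is to establish the two assertions in turn. The first, the equality of graded dg algebras $\mathcal{GE}_{F_\varepsilon A}(F_\varepsilon M)=F_\varepsilon\mathcal{BE}_A(M)$, is the real content; the second, the dg algebra isomorphism, will then drop out by feeding the graded algebra $F_\varepsilon A$ and the bounded complex $F_\varepsilon M$ into Lemma~\ref{lem:from-graded-dg-endo-alg-to-dg-endo-alg} and applying $\Tot$ to the first assertion. Since the two Adams gradings play symmetric roles, I would treat only $\varepsilon=2$, so that $F_2$ forgets the second Adams grading; the case $\varepsilon=1$ is identical after swapping the two Adams gradings.

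For the first assertion I would unwind both sides into $\Hom$-spaces in $\Grmod A$. By definition $(F_2\mathcal{BE}_A(M))^{ij}=\bigoplus_{l\in\mathbb{Z}}\cHom^i_{\Grmod A}(M,M\langle(j,l)\rangle)$, whereas $\mathcal{GE}_{F_2A}(F_2M)^{ij}=\cHom^i_{\Grmod F_2A}(F_2M,F_2M\langle j\rangle)=\prod_n\Hom_{\Grmod F_2A}(F_2M^n,F_2M^{n+i}\langle j\rangle)$. The heart of the matter is the natural identity
\[
\Hom_{\Grmod F_2A}(F_2P,F_2N\langle j\rangle)=\bigoplus_{l\in\mathbb{Z}}\Hom_{\Grmod A}(P,N\langle(j,l)\rangle)
\]
valid for $P$ a finitely generated bigraded $A$-module and $N$ arbitrary. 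Indeed, a first-Adams-degree-preserving homomorphism of $F_2A$-modules splits into components according to the second Adams degree it shifts; finite generation of $P$ forces the image of each generator to lie in a \emph{direct} sum over the second Adams grading, hence only finitely many components can be nonzero. This is exactly what makes the family of shift-homomorphisms a direct sum rather than a product, and makes the correspondence well defined in both directions.

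Substituting this identity, I would then commute the product over the complex degrees $n$ — finite because $M$ is bounded — past the direct sum over $l$, using $\prod_n\bigoplus_l=\bigoplus_l\prod_n$, to obtain $\mathcal{GE}_{F_2A}(F_2M)^{ij}=\bigoplus_l\cHom^i_{\Grmod A}(M,M\langle(j,l)\rangle)=(F_2\mathcal{BE}_A(M))^{ij}$. It then remains to check that this degreewise identification respects the differentials, the multiplications and the surviving (complex and first-Adams) gradings; since the forgetful functor $F_2$ changes neither composition nor differential, and the identification above is the tautological one, this is a routine bookkeeping verification.

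For the second assertion I would apply Lemma~\ref{lem:from-graded-dg-endo-alg-to-dg-endo-alg} to the graded algebra $F_\varepsilon A$ and the bounded complex $F_\varepsilon M$ of graded $F_\varepsilon A$-modules, which yields the dg algebra isomorphism $\cEnd_{F_\varepsilon A}(\Tot F_\varepsilon M)\cong\Tot\mathcal{GE}_{F_\varepsilon A}(F_\varepsilon M)$; applying $\Tot$ to the first assertion gives $\Tot\mathcal{GE}_{F_\varepsilon A}(F_\varepsilon M)=\Tot F_\varepsilon\mathcal{BE}_A(M)$, and the two combine to the claim. The only genuine obstacle is the interchange in the first assertion: one must invoke finite generation to guarantee a direct sum (not a product) over the forgotten grading, and boundedness to commute the product over complex degrees with it; all remaining checks are structural verifications that the natural identification is one of graded dg algebras.
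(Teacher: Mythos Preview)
Your proposal is correct and follows the same approach as the paper. The paper's own proof simply says the first equality is ``clear from definition'' and derives the second from Lemma~\ref{lem:from-graded-dg-endo-alg-to-dg-endo-alg}; you have unpacked the ``clear from definition'' step, making explicit the use of finite generation (to get a direct sum rather than a product over the forgotten grading) and boundedness (to commute the finite product over complex degrees with that direct sum), which is exactly what underlies the authors' one-line assertion.
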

\begin{proof}
The first equality is clear from definition, and the second one follows from Lemma~\ref{lem:from-graded-dg-endo-alg-to-dg-endo-alg}.
\end{proof}

We remind the reader that in $\cEnd_{F_\varepsilon A}(\Tot F_\varepsilon
M)$ we view $F_\varepsilon A$ as a dg algebra with trivial differential.

\begin{corollary} We have an isomorphism of graded algebras
\[\bigoplus_{p\in\mathbb{Z}}\Hom_{\cd(F_\varepsilon A)}(\Tot F_\varepsilon M,\Sigma^p \Tot F_\varepsilon M)\cong\Tot F_\varepsilon \bigoplus_{i,j,l\in\mathbb{Z}}\Hom_{\cd(\Grmod A)}(M,\Sigma^i M\langle(j,l)\rangle),\]
where $\varepsilon=1,2$.
\end{corollary}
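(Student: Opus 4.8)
The plan is to recognise both sides of the asserted isomorphism as the total cohomology of one and the same bigraded dg algebra, namely $\mathcal{BE}_A(M)$, and then to identify them by appealing to the two preceding lemmas. So the corollary will be a formal consequence of Lemma~\ref{lem:from-bigraded-dg-endo-alg-to-graded-dg-endo-alg-to-dg-endo-alg} and Lemma~\ref{l:cohomology-of-bigraded-dg-alg}, with no new computation.

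First I would rewrite the left-hand side. Since $M$ is a bounded complex of finitely generated bigraded projective $A$-modules, $\Tot F_\varepsilon M$ is a bounded dg $F_\varepsilon A$-module whose underlying graded module is a finite sum of shifts of finitely generated projectives; it is therefore cofibrant, hence $\ch$-projective, so that $\Hom_{\cd(F_\varepsilon A)}(\Tot F_\varepsilon M,\Sigma^p\Tot F_\varepsilon M)=H^p\cEnd_{F_\varepsilon A}(\Tot F_\varepsilon M)$, with composition in $\cd(F_\varepsilon A)$ corresponding to the multiplication induced on cohomology. Summing over $p$ and applying the dg-algebra isomorphism of Lemma~\ref{lem:from-bigraded-dg-endo-alg-to-graded-dg-endo-alg-to-dg-endo-alg}, the left-hand side becomes $H^{*}\Tot F_\varepsilon\mathcal{BE}_A(M)$.

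Next I would rewrite the right-hand side. As $M$ is likewise $\ch$-projective in $\cc_{dg}(\Grmod A)$, we have $H^{i}\cHom_{\Grmod A}(M,M\langle(j,l)\rangle)=\Hom_{\cd(\Grmod A)}(M,\Sigma^i M\langle(j,l)\rangle)$, so the inner direct sum is exactly the trigraded total cohomology $H^{*,*,*}\mathcal{BE}_A(M)$ of the bigraded dg algebra $\mathcal{BE}_A(M)$. Hence the right-hand side is $\Tot F_\varepsilon H^{*,*,*}\mathcal{BE}_A(M)$, and the two expressions coincide by Lemma~\ref{l:cohomology-of-bigraded-dg-alg} applied to $\mathcal{BE}_A(M)$, which gives $\Tot F_\varepsilon H^{*,*,*}\mathcal{BE}_A(M)=H^{*}\Tot F_\varepsilon\mathcal{BE}_A(M)$.

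The only point requiring care, and thus the main obstacle, is the bookkeeping of the algebra structures and signs: I must make sure that the isomorphisms furnished by Lemmas~\ref{lem:from-bigraded-dg-endo-alg-to-graded-dg-endo-alg-to-dg-endo-alg} and~\ref{l:cohomology-of-bigraded-dg-alg} are isomorphisms of (graded, respectively bigraded) dg algebras, so that passing to cohomology yields an isomorphism of graded algebras rather than merely of graded vector spaces, and that the Yoneda composition on the left-hand side matches the multiplication transported through these isomorphisms. Since the sign twists in $\Tot$ and in Lemma~\ref{lem:from-graded-dg-endo-alg-to-dg-endo-alg} were arranged precisely to make these multiplicative, this amounts to tracking definitions rather than to any genuine difficulty.
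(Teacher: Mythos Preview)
Your proposal is correct and follows essentially the same route as the paper: identify the left-hand side with $H^*\cEnd_{F_\varepsilon A}(\Tot F_\varepsilon M)\cong H^*\Tot F_\varepsilon\mathcal{BE}_A(M)$ via Lemma~\ref{lem:from-bigraded-dg-endo-alg-to-graded-dg-endo-alg-to-dg-endo-alg}, identify the right-hand side with $\Tot F_\varepsilon H^{*,*,*}\mathcal{BE}_A(M)$, and then invoke Lemma~\ref{l:cohomology-of-bigraded-dg-alg}. Your added remarks on $\ch$-projectivity and on the multiplicativity of the identifications make explicit what the paper leaves implicit.
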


\begin{proof}
This is a consequence of Lemma~\ref{l:cohomology-of-bigraded-dg-alg} and Lemma~\ref{lem:from-bigraded-dg-endo-alg-to-graded-dg-endo-alg-to-dg-endo-alg} because there is an isomorphism of graded algebras
\[
\bigoplus_{p\in\mathbb{Z}}\Hom_{\cd(F_\varepsilon A)}(\Tot F_\varepsilon M,\Sigma^p \Tot F_\varepsilon M)\cong H^*\cEnd_{F_\varepsilon A}(\Tot F_\varepsilon
M)
\]
and an isomorphism of trigraded algebras
\[
\bigoplus_{i,j,l\in\mathbb{Z}}\Hom_{\cd(\Grmod A)}(M,\Sigma^i M\langle(j,l)\rangle)\cong H^{*,*,*}\mathcal{BE}_A(M).
\]
\end{proof}

\begin{lemma}\label{l:transfering-formalness-bigraded-endo}
If $\cEnd_{F_1 A}(\Tot F_1 M)$
has cohomology concentrated in degree $0$, then $\cEnd_{F_2 A}(\Tot F_2 M)$ is formal.
\end{lemma}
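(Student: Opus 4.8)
The plan is to deduce Lemma~\ref{l:transfering-formalness-bigraded-endo} directly from the bigraded formality machinery assembled earlier, with the bigraded dg endomorphism algebra $\mathcal{BE}_A(M)$ playing the role of the bigraded dg algebra whose formality we wish to transfer between its two totalizations. The key observation is that, by Lemma~\ref{lem:from-bigraded-dg-endo-alg-to-graded-dg-endo-alg-to-dg-endo-alg}, we have isomorphisms of dg algebras
\[
\cEnd_{F_1 A}(\Tot F_1 M)\cong\Tot F_1\,\mathcal{BE}_A(M)
\quad\text{and}\quad
\cEnd_{F_2 A}(\Tot F_2 M)\cong\Tot F_2\,\mathcal{BE}_A(M).
\]
So the hypothesis says precisely that $\Tot F_1\,\mathcal{BE}_A(M)$ has cohomology concentrated in degree $0$, and the conclusion asks that $\Tot F_2\,\mathcal{BE}_A(M)$ be formal.

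With this translation in place, the proof is a one-line application of Lemma~\ref{l:transfering-formalness-bigraded}. Indeed, taking the bigraded dg algebra in that lemma to be $\mathcal{BE}_A(M)$, its hypothesis is exactly that $\Tot F_1\,\mathcal{BE}_A(M)$ has cohomology concentrated in degree $0$; its conclusion is that $\mathcal{BE}_A(M)$ and $\Tot F_2\,\mathcal{BE}_A(M)$ are formal. In particular $\Tot F_2\,\mathcal{BE}_A(M)\cong\cEnd_{F_2 A}(\Tot F_2 M)$ is formal, which is what we want. Since formality is preserved under isomorphism of dg algebras, transporting the conclusion across the isomorphism above completes the argument.

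The main (and really the only) point requiring care is to make sure the bigraded bookkeeping matches up: one must check that $M$, being a bounded complex of finitely generated bigraded projective $A$-modules, genuinely produces a bigraded dg algebra $\mathcal{BE}_A(M)$ in the sense of Section~\ref{ss:bigraded-dg-alg}, so that the three gradings (complex grading and the two Adams gradings coming from the two module gradings) are the ones indexing $F_1$, $F_2$ and $\Tot$ consistently with the conventions fixed there. Once this identification of gradings is confirmed, there is no computational obstacle: Lemma~\ref{lem:from-bigraded-dg-endo-alg-to-graded-dg-endo-alg-to-dg-endo-alg} supplies the two dg algebra isomorphisms and Lemma~\ref{l:transfering-formalness-bigraded} supplies the formality transfer, so the result follows formally.
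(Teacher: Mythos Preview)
Your proof is correct and is exactly the paper's approach: the paper's proof is the single line ``This follows immediately from Lemma~\ref{lem:from-bigraded-dg-endo-alg-to-graded-dg-endo-alg-to-dg-endo-alg} and Lemma~\ref{l:transfering-formalness-bigraded},'' and you have simply unpacked how those two lemmas combine.
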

\begin{proof} This follows immediately from Lemma~\ref{lem:from-bigraded-dg-endo-alg-to-graded-dg-endo-alg-to-dg-endo-alg} and Lemma~\ref{l:transfering-formalness-bigraded}.
\end{proof}

\begin{proposition}\label{p:derived-equiv-graded-alg-4} Assume that $\{M\langle(i,j)\rangle|i,j\in\mathbb{Z}\}$ is a set of compact generators of $\cd(\Grmod A)$.
If $\Tot F_1M$ is a tilting object in $\cd(F_1 A)$, then the two graded algebras $F_2 A$ and $H^*\cEnd_{F_2 A}(\Tot F_2 M)$ are derived equivalent. In other words, if the trigraded algebra $B=\bigoplus_{i,j,l\in\mathbb{Z}}\Hom_{\cd(\Grmod A)}(M,\Sigma^i M\langle(j,l)\rangle)$ is concentrated in tridegrees $(i,*,-i)$ ($i\in\mathbb{Z}$), then the two graded algebras $F_2 A$ and $\Tot F_2 B$ are derived equivalent.
\end{proposition}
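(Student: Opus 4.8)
The plan is to produce the asserted derived equivalence as a chain
\[
\cd(F_2 A)\simeq\cd(\cEnd_{F_2 A}(\Tot F_2 M))\simeq\cd(H^*\cEnd_{F_2 A}(\Tot F_2 M)),
\]
where $\cd(F_2 A)=\cd(\Tot F_2 A)$ ($F_2 A$ viewed as a dg algebra with trivial differential). The first equivalence will come from Proposition~\ref{p:derived-equiv}(a) once we know that $\Tot F_2 M$ is an $\ch$-projective compact generator of $\cd(\Tot F_2 A)$ with dg endomorphism algebra $\cEnd_{F_2 A}(\Tot F_2 M)$, and the second from Lemma~\ref{l:formal-dg-alg} once we know this dg algebra is formal; by definition the composite is a derived equivalence of the two graded algebras. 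For the reformulation, recall from the corollary above that $H^*\cEnd_{F_2 A}(\Tot F_2 M)\cong\Tot F_2 B$, and that the hypothesis that $B$ is concentrated in tridegrees $(i,*,-i)$ says exactly that $\Tot F_1 B\cong H^*\cEnd_{F_1 A}(\Tot F_1 M)$ sits in degree $0$, i.e.\ that $\Tot F_1 M$ is tilting. So the two formulations coincide, and everything rests on (i) compact generation of $\Tot F_2 M$ and (ii) formality of $\cEnd_{F_2 A}(\Tot F_2 M)$.

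First I would transport the generation hypothesis to the singly graded category $\cd(\Grmod F_2 A)$. Since $\{M\langle(i,j)\rangle\mid i,j\in\mathbb{Z}\}$ is a set of compact generators of $\cd(\Grmod A)$, Theorem~\ref{thm:compact-objects} identifies the compact objects of $\cd(\Grmod A)$ with $\thick(M\langle(i,j)\rangle\mid i,j)$; in particular the free module $A$ lies there. The forgetful functor $F_2\colon\cd(\Grmod A)\to\cd(\Grmod F_2 A)$ is a triangle functor commuting with coproducts, carries finitely generated bigraded projectives to finitely generated graded projectives (hence compacts to compacts), and satisfies $F_2(M\langle(i,j)\rangle)=(F_2 M)\langle i\rangle$. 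As triangle functors preserve thick subcategories, applying $F_2$ to $A\in\thick(M\langle(i,j)\rangle)$ gives $F_2 A\in\thick((F_2 M)\langle i\rangle\mid i\in\mathbb{Z})$, and applying the invertible Adams shifts shows $(F_2 A)\langle s\rangle\in\thick((F_2 M)\langle i\rangle\mid i)$ for every $s$. Since $\{(F_2 A)\langle s\rangle\}$ is the standard set of compact generators of $\cd(\Grmod F_2 A)$, the family $\{(F_2 M)\langle i\rangle\mid i\in\mathbb{Z}\}$ is one too.

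Next I would push this through $\Tot$. By Theorem~\ref{thm:tot-is-orbital}(b) the functor $\Tot\colon\cd(\Grmod F_2 A)\to\cd(\Tot F_2 A)$ is $(\Sigma\circ\langle-1\rangle)$-orbitally fully faithful, $\Tot(F_2 A)=\Tot F_2 A$ is a compact generator of $\cd(\Tot F_2 A)$ by Theorem~\ref{t:compact-generator}(a), and $\{(\Sigma\circ\langle-1\rangle)^p(F_2 A)\mid p\}$ generates $\cd(\Grmod F_2 A)$ since up to the invertible $\Sigma^p$ it is the standard generating family. Applying Lemma~\ref{lem:compact-generator-of-orbit-category} with $T=F_2 A$ and $X=F_2 M$---and noting that $\{(\Sigma\circ\langle-1\rangle)^p(F_2 M)\}$ generates $\cd(\Grmod F_2 A)$ if and only if $\{(F_2 M)\langle i\rangle\}$ does, which is exactly what the previous paragraph established---yields that $\Tot F_2 M$ is a compact generator of $\cd(\Tot F_2 A)$. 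As the totalization of a bounded complex of finitely generated graded projectives, it is cofibrant and hence $\ch$-projective (cf.\ the proof of Theorem~\ref{thm:tot-is-orbital}(b)), so Proposition~\ref{p:derived-equiv}(a) delivers the first equivalence above.

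Finally, the tilting hypothesis enters only to force formality. Since $\Tot F_1 M$ is tilting, $H^n\cEnd_{F_1 A}(\Tot F_1 M)\cong\Hom_{\cd(F_1 A)}(\Tot F_1 M,\Sigma^n\Tot F_1 M)=0$ for $n\neq 0$, so $\cEnd_{F_1 A}(\Tot F_1 M)$ has cohomology concentrated in degree $0$; Lemma~\ref{l:transfering-formalness-bigraded-endo} then makes $\cEnd_{F_2 A}(\Tot F_2 M)$ formal, and Lemma~\ref{l:formal-dg-alg} supplies the second equivalence. The one point demanding care is the transport of compact generation in the middle two paragraphs: it is the generation hypothesis, through the triangle functor $F_2$ and Theorem~\ref{thm:compact-objects}, that produces the compact generator $\Tot F_2 M$, whereas the tilting hypothesis is reserved for formality. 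Once these two roles are separated, the rest is a routine assembly of the cited lemmas.
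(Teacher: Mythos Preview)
Your proof is correct and follows essentially the same route as the paper: first show $\Tot F_2 M$ is a compact generator of $\cd(F_2 A)$ by pushing the generation hypothesis through $F_2$ and then through $\Tot$ via Lemma~\ref{lem:compact-generator-of-orbit-category}, invoke Proposition~\ref{p:derived-equiv}(a), and finally use the tilting hypothesis together with Lemma~\ref{l:transfering-formalness-bigraded-endo} and Lemma~\ref{l:formal-dg-alg} for formality. You supply more detail than the paper does in the transport-of-generation step (the paper simply asserts that $\{F_2 M\langle i\rangle\}$ generates $\cd(\Grmod F_2 A)$), but the structure and the key lemmas are identical.
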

\begin{proof}
Since $\{M\langle(i,j)\rangle|i,j\in\mathbb{Z}\}$ is a set of compact generators of $\cd(\Grmod A)$, it follows that $\{F_2M\langle i\rangle|i\in\mathbb{Z}\}$ is a set of compact generators of $\cd(\Grmod F_2A)$. So $\{\Sigma^{i}F_2M\langle -i\rangle|i\in\mathbb{Z}\}$ is a set of compact generators of $\cd(\Grmod F_2A)$. Applying Lemma~\ref{lem:compact-generator-of-orbit-category} to the functor $\Tot:\cd(\Grmod F_2A)\to\cd(F_2A)$, we obtain that $\Tot F_2M$ is a compact generator of $\cd(F_2A)$. Now by Proposition~\ref{p:derived-equiv}, $F_2A$ and $\cEnd_{F_2A}(\Tot F_2M)$ are derived equivalent.

That $\Tot F_1M$ is a tilting object in $\cd(F_1A)$ means that $\cEnd_{F_1A}(\Tot F_1M)$ has cohomology concentrated in degree $0$. By Lemma~\ref{l:transfering-formalness-bigraded-endo}, $\cEnd_{F_2A}(\Tot F_2M)$ is formal, and hence by Lemma~\ref{l:formal-dg-alg} it is derived equivalent to $H^*\cEnd_{F_2A}(\Tot F_2M)$. This completes the proof.
\end{proof}

Briefly speaking, Proposition~\ref{p:derived-equiv-graded-alg-4} says the following: if a graded algebra $A$ is derived equivalent to an (ordinary) algebra $B$ via a nice tilting object, and $A'$ is a graded algebra obtained from $A$ by changing the grading, then there is a suitable graded algebra structure $B'$ on $B$ such that $A'$ and $B'$ are derived equivalent.

\section{Gentle one-cycle algebras}\label{s:gentle-one-cycle-alg}

In this section we recall results on gentle algebras of Assem--Skowro\'nski~\cite{AssemSkowronski87},
Vossieck~\cite{Vossieck01} and  Bobi\'{n}ski--Geiss--Skowro\'nski~\cite{BobinskiGeissSkowronski04}.

\subsection{Gentle algebras}\label{ss:gentle-alg}

Let $Q$ be a finite quiver and $I$ a set of minimal relations. We
call the algebra $kQ/(I)$ a \emph{gentle algebra} if the following
conditions hold
\begin{itemize}
\item[(1)]
for each vertex of $Q$ there are at most two incoming arrows and at
most two outgoing arrows,
\item[(2)]
for each arrow $\beta$ of $Q$, both the number of arrows $\alpha$
with $t(\alpha)=s(\beta)$ and $\beta\alpha\notin I$ and the number
of arrows $\gamma$ with $s(\gamma)=t(\beta)$ and $\gamma\beta\notin
I$ are not greater than $1$,
\item[(3)]
for each arrow $\beta$ of $Q$, both the number of arrows $\alpha$
with $t(\alpha)=s(\beta)$ and $\beta\alpha\in I$ and the number of
arrows $\gamma$ with $s(\gamma)=t(\beta)$ and $\gamma\beta\in I$ are
not greater than $1$,
\item[(4)]
all relations in $I$ are paths of length $2$.
\end{itemize}

\begin{example}
\label{ex:gentle-alg} The path algebra of $\xymatrix{1\ar@<.7ex>[r]^{\alpha}&2\ar@<.7ex>[l]^{\beta}\ar[r]^{\gamma} &3}$ modulo the relations $\alpha\beta$ and $\gamma\alpha$ is a typical example of gentle algebras.
\end{example}

The repetitive algebra of a gentle algebra is special biserial (\cite[Proposition 4]{Schroer99}), which is always tame. It follows that the bounded derived category of a gentle algebra is tame, since the bounded derived category of an algebra is triangle equivalent to a full subcategory
of the stable category of its repetitive algebra (\cite[Theorem 4.9]{Happel87}).

\subsection{Gentle one-cycle algebras and derived discrete algebras}\label{ss:derived-discrete-alg}
A \emph{gentle one-cycle algebra} is a gentle algebra whose
underlying graph contains exactly one cycle. It \emph{satisfies the
clock condition} if it has the same number of clockwise and
counterclockwise oriented relations on the cycle. For example, the algebra in Example~\ref{ex:gentle-alg} is a gentle one-cycle algebra which does not satisfy the clock condition.

Two algebras $A$ and $B$ are \emph{tilting-cotilting equivalent} if there is a sequence of algebras $A_0=A,A_1,\ldots,A_s=B$ and $A_i$-modules $T_{i+1}$
such that $\End_{A_i}(T_{i+1})\cong A_{i+1}$ and $T_{i+1}$ is a tilting $A_i$-module of
projective dimension at most $1$ or a cotilting $A_i$-module of injective dimension at most $1$. Gentle algebras are Gorenstein~\cite{GeissReiten05}, so co-tilting modules over gentle algebras are tilting modules. Therefore two gentle algebras which are tilting-cotilting equivalent are derived equivalent.

\begin{theorem}[{\cite[Theorem (A)]{AssemSkowronski87}}]\label{t:gentle-one-cycle-clock}
A gentle one-cycle algebra satisfying the clock condition is tilting-cotilting equivalent to the path algebra of a quiver of type $\widetilde{A}_{p,q}$
for some $p,q>0$.
\end{theorem}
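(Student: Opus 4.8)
The plan is to realize the path algebra of a quiver of type $\widetilde{A}_{p,q}$ as the endomorphism algebra of a tilting or cotilting module over a gentle one-cycle algebra satisfying the clock condition, and then to reduce the general case to a normal form by a sequence of such tilting/cotilting mutations. Since tilting-cotilting equivalence is transitive and, by the discussion preceding the statement, implies derived equivalence, it suffices to exhibit one explicit chain of mutations from an arbitrary clock-condition gentle one-cycle algebra to the hereditary algebra $k\widetilde{A}_{p,q}$.

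First I would set up the combinatorics of the underlying cycle. A gentle one-cycle algebra has its relations concentrated around the unique cycle, and the clock condition asserts that the number of clockwise-oriented relations on the cycle equals the number of counterclockwise ones. I would fix an orientation of the cycle and record the sequence of arrows and relations as one travels around it, together with the \emph{trees} (linear strings, by the gentle conditions) hanging off each cycle vertex. The integers $p$ and $q$ in the target $\widetilde{A}_{p,q}$ should be read off from this data: they measure the two segments of the cycle determined by the balanced relations, \emph{i.e.} the lengths of the clockwise and counterclockwise oriented subpaths between consecutive changes of orientation.

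Next I would perform the reductions. The key technical step is to show that one can \emph{straighten out the hanging trees} and \emph{dissolve the relations on the cycle} by repeatedly applying APR-type tilts (reflections at sources/sinks) and, where a relation obstructs a tilt of projective dimension one, a cotilt of injective dimension one instead. Each such move replaces $A_i$ by $\End_{A_i}(T_{i+1})$ with $T_{i+1}$ a tilting module of projective dimension $\le 1$ or a cotilting module of injective dimension $\le 1$, which is exactly the data required in the definition of tilting-cotilting equivalence. One verifies at each step that the new algebra is again gentle one-cycle and still satisfies the clock condition, so that the invariant $(p,q)$ is preserved; this is where the clock condition is essential, since it is precisely the obstruction that distinguishes the $\widetilde{A}$ case (tame, tilting-cotilting equivalent to a hereditary algebra) from the derived-discrete case.

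The main obstacle will be the bookkeeping that guarantees the process \emph{terminates} at the hereditary algebra $k\widetilde{A}_{p,q}$ rather than cycling or producing a non-gentle intermediate algebra. I would control termination by a monovariant: for instance, the total number of relations, or the total length of the hanging trees, which can be made to strictly decrease (or stay fixed while a secondary invariant decreases) under a suitable choice of reflection. Once all hanging trees are absorbed and all but the unavoidable cyclic structure is removed, the clock condition forces the remaining oriented cycle to split into exactly two opposite-oriented paths of lengths $p$ and $q$ with no relations, which is the definition of $k\widetilde{A}_{p,q}$. The careful part is ensuring that whenever a projective tilt is blocked one may legitimately substitute an injective cotilt; here the Gorenstein property of gentle algebras, noted in the excerpt, guarantees that the cotilting modules one needs exist and have the required injective dimension.
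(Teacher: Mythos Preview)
The paper does not prove this theorem; it is quoted verbatim from \cite[Theorem (A)]{AssemSkowronski87} with no argument, followed only by the remark that the original proof is field-independent. So there is no proof in the present paper to compare your proposal against.

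For what it is worth, your outline is in the spirit of the original Assem--Skowro\'nski argument, which also proceeds by an explicit chain of APR-type reflections (tilts and cotilts) that simplify the bound quiver while staying inside the class of gentle one-cycle algebras. Two points in your sketch are imprecise. First, the invariants $p$ and $q$ are the numbers of clockwise- and counterclockwise-oriented arrows on the cycle in the final hereditary normal form, not ``lengths of segments between relations''; the clock condition does not determine $(p,q)$ directly from the relation pattern but rather guarantees that all relations can eventually be removed. Second, your use of the Gorenstein property is not the one in the paper: there it is invoked only to note that over a gentle algebra every cotilting module is already tilting, so that tilting-cotilting equivalence implies derived equivalence; it is not used to produce cotilts when tilts are blocked. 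The termination of the reduction process, which you correctly flag as the main obstacle, is genuinely the substantive part of the original proof and would have to be carried out carefully rather than sketched.
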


This result was stated in \cite{AssemSkowronski87} under the assumption that $k$ is algebraically closed, but its proof is field-independent.

\smallskip
Let $\Omega=\{(r,n,m)\in\mathbb{Z}^3|n\geq r\geq 1,m\geq 0\}$. In~\cite{BobinskiGeissSkowronski04}, the authors construct a family of gentle one-cycle algebras not
satisfying the clock condition:
$\Lambda(r,n,m)=kQ(r,n,m)/I(r,n,m)$ for $(r,n,m)\in\Omega$, where $Q(r,n,m)$ is the quiver
\[
\xymatrix@R=1.2pc@C=1.2pc{&&&&& \scriptstyle{1}\ar[r]^{\scriptstyle{\alpha_1}}&\cdot\ar@{.}[r]&\cdot\ar[r]^(.35){\scriptstyle{\alpha_{n-r-2}}} & \scriptstyle{n-r-1}\ar[rd]^{\scriptstyle{\alpha_{n-r-1}}}&\\
\scriptstyle{(-m)}\ar[r]^(.6){\scriptstyle{\alpha_{-m}}}&\cdot\ar@{.}[r]&\cdot\ar[r]^(.4){\scriptstyle{\alpha_{-2}}}&\scriptstyle{(-1)}\ar[r]^(.6){\scriptstyle{\alpha_{-1}}} & \scriptstyle{0}\ar[ru]^{\scriptstyle{\alpha_0}} &&&& & \scriptstyle{n-r}\ar[ld]^{\scriptstyle{\alpha_{n-r}}}\\
&&&&&\scriptstyle{n-1}\ar[lu]^{\scriptstyle{\alpha_{n-1}}} &
\cdot\ar[l]^(.35){\scriptstyle{\alpha_{n-2}}}&\cdot\ar@{.}[l]
&\scriptstyle{n-r+1}\ar[l]^(.65){\scriptstyle{\alpha_{n-r+1}}}}
\]
and $I(r,n,m)$ is the ideal of $kQ(r,n,m)$ generated by the paths
$\alpha_0\alpha_{n-1}$, $\alpha_{n-1}\alpha_{n-2}$, $\ldots$,
$\alpha_{n-r+1}\alpha_{n-r}$.
Notice that the algebra $\Lambda(r,n,m)$ is of finite global dimension
 if $n>r$ and  is of infinite global dimension if $n=r$.

\begin{theorem}[{\cite[Theorem 2.1]{Vossieck01} and \cite[Theorem A]{BobinskiGeissSkowronski04}}]\label{t:discrete-derived-cat} Assume that $k$ is algebraically closed.
Let $A$ be a finite-dimensional algebra not derived equivalent to a
Dynkin quiver. Then the following are equivalent
\begin{itemize}
\item[(i)] $A$ is derived discrete,
\item[(ii)] $A$ is a gentle one-cycle algebra not satisfying the clock condition,
\item[(iii)] $A$ is derived equivalent to $\Lambda(r,n,m)$ for some $(r,n,m)$,
\item[(iv)] $A$ is tilting-cotilting equivalent to $\Lambda(r,n,m)$ for some $(r,n,m)$.
\end{itemize}
\end{theorem}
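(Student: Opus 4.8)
The plan is to establish the four conditions as equivalent by running the cycle (i) $\Rightarrow$ (ii) $\Rightarrow$ (iv) $\Rightarrow$ (iii) $\Rightarrow$ (i), arranged so that the single deep ingredient, Vossieck's classification of derived discrete algebras, enters exactly once.

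The step (i) $\Rightarrow$ (ii) is Vossieck's theorem \cite[Theorem 2.1]{Vossieck01}: over an algebraically closed field a finite-dimensional algebra has discrete derived category precisely when it is either derived equivalent to the path algebra of a Dynkin quiver or is, up to isomorphism, a gentle one-cycle algebra not satisfying the clock condition. (Note the asymmetry: in the second case $A$ is the normal form \emph{on the nose}, not merely up to derived equivalence, which is what makes (i) $\Rightarrow$ (ii) literally correct.) The standing hypothesis that $A$ is not derived equivalent to a Dynkin quiver rules out the first alternative, leaving (ii). I would import this as a black box; its proof rests on an analysis of the stable category of the repetitive algebra of $A$ (which is special biserial, hence tame) together with covering theory and the combinatorics of the Auslander--Reiten components, and reproving it is out of scope.

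For (ii) $\Rightarrow$ (iv) I would appeal to the explicit normal form of Bobi\'nski--Geiss--Skowro\'nski \cite[Theorem A]{BobinskiGeissSkowronski04}: a gentle one-cycle algebra violating the clock condition is carried, by a finite chain of tilting and cotilting modules of projective, respectively injective, dimension at most one, to one of the algebras $\Lambda(r,n,m)$ with $(r,n,m)\in\Omega$. This is the non-clock counterpart of the Assem--Skowro\'nski reduction recalled in Theorem~\ref{t:gentle-one-cycle-clock}, and it is the second substantial input, though of a concrete combinatorial nature.

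The remaining two steps are formal. For (iv) $\Rightarrow$ (iii), each link of a tilting-cotilting chain induces a derived equivalence: a tilting module of projective dimension at most one does so by Happel's theorem, and a cotilting module of injective dimension at most one by the dual statement (for gentle, hence Gorenstein, algebras the two notions even coincide, as noted before Theorem~\ref{t:gentle-one-cycle-clock}); composing the equivalences yields a triangle equivalence $\cd^b(\mod A)\simeq\cd^b(\mod\Lambda(r,n,m))$. For (iii) $\Rightarrow$ (i), the algebra $\Lambda(r,n,m)$ is by its very construction a gentle one-cycle algebra not satisfying the clock condition, hence derived discrete by the easy direction of \cite[Theorem 2.1]{Vossieck01}; since derived discreteness is a property of the bounded derived category and therefore invariant under derived equivalence, $A$ is derived discrete as well, closing the cycle. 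The only genuine obstacle is (i) $\Rightarrow$ (ii): every other implication is either a soft consequence of the tilting/derived-invariance formalism or a mechanical application of the $\Lambda(r,n,m)$ normal form, whereas (i) $\Rightarrow$ (ii) is the heart of the classification and I would cite it rather than attempt to prove it.
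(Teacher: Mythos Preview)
The paper does not give its own proof of this theorem: it is stated purely as a citation of \cite[Theorem 2.1]{Vossieck01} and \cite[Theorem A]{BobinskiGeissSkowronski04}, followed only by the remark that the proof of (ii)$\Rightarrow$(iii) in \cite{BobinskiGeissSkowronski04} is field-independent. Your cycle (i)$\Rightarrow$(ii)$\Rightarrow$(iv)$\Rightarrow$(iii)$\Rightarrow$(i) is a correct reconstruction of how the cited sources combine, and the identification of (i)$\Rightarrow$(ii) as the one deep step (Vossieck's classification, taken as a black box) matches exactly how the paper treats it. There is nothing further in the paper to compare against.
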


We remark that the proof of the implication (ii)$\Rightarrow$(iii) in \cite{BobinskiGeissSkowronski04} is field-independent.

\section{The derived category of a graded $\widetilde{A}_{p,q}$}\label{s:graded-a-pq}

Let $\Pi=\{(p,q,r)\in\mathbb{Z}^3|p\geq 0,q>0\}$. For $(p,q,r)\in\Pi$, let $\Gamma(p,q,r)$ be the complete path algebra of the graded quiver
\[\xymatrix@R=0.7pc@C=0.7pc{&p+q\ar[dl]_{\alpha_{p+q}}&\cdot\ar[l]\ar@{.}[r]&\cdot&p+2\ar[l]&\\
1&&&&&p+1\ar[dl]^{\alpha_p}\ar[ul]_{\alpha_{p+1}},\\
&2\ar[ul]^{\alpha_1}&\cdot\ar[l]^{\alpha_2}\ar@{.}[r]&\cdot&p\ar[l]&}\]
where $\deg(\alpha_i)=\delta_{i,p+q}r$. This is a graded hereditary algebra. In this section we study
$\Grmod\Gamma(p,q,r)$ and  the derived category of $\Gamma(p,q,r)$ which is viewed as a dg algebra
with trivial differential. We start with
recalling some well-known facts.

\subsection{Representations over a quiver of type
$A_\infty^\infty$}\label{ss:representation-a-double-infty}
The descriptions of indecomposable representations and Auslander--Reiten quivers in this subsection can be obtained using results in \cite[Section 5]{BautistaLiuPaquette13}.

Let $Q$ be a quiver of type $A_\infty^\infty$ whose vertices are indexed by $\mathbb{Z}$ and $i$ is adjacent to $i-1$ and $i+1$:
\[\xymatrix{\ldots\ar@{-}[r]&i-1\ar@{-}[r]&i\ar@{-}[r]&i+1\ar@{-}[r]&\ldots}.\]
For $a,b\in\{-\infty\}\cup\mathbb{Z}\cup\{+\infty\}$ with $a>b$, we construct a representation of $Q$ by setting
the $k$-vector space $M_{a,b}(i)$ associated to a vertex $i$ as
\[M_{a,b}(i)=\begin{cases} k & \text{if } b\leq i\leq a-1\\ 0 & \text{otherwise}\end{cases}\]
and setting the $k$-linear map $M_{a,b}(\alpha)$ associated to an arrow $\alpha:i\rightarrow j$ as
\[M_{a,b}(\alpha)=\begin{cases} \mathrm{id}_k &\text{if } b\leq i,j\leq a-1\\ 0 & \text{otherwise}.\end{cases}\]
These representations are indecomposable with endomorphism algebra isomorphic to $k$ and every indecomposable representation is isomorphic to one such. The simple representations are precisely the $M_{i+1,i}$'s ($i\in\mathbb{Z}$) and the finite-dimensional indecomposable representations are precisely the $M_{a,b}$'s with $a,b\in\mathbb{Z}$ and $a>b$.

\subsubsection{The linear orientation}\label{sss:linear}
Let $Q=Q^l$ be the quiver of type $A_\infty^\infty$ with linear orientation:
\[\xymatrix{\ldots\ar[r]&i-1\ar[r]&i\ar[r]&i+1\ar[r]&\ldots}.\]
Let $s$ denote the unique automorphism of $Q$ induced by $i\mapsto i-1$.

The Gabriel quiver of $\Rep Q$ consists of four connected components:
\begin{itemize}
\item[$\cdot$] the $M_{a,b}$, $a,b\in\mathbb{Z},a>b$, form a component $\cx$ of type $\mathbb{Z}A_\infty$,
\item[$\cdot$] the $M_{+\infty,b}$, $b\in\mathbb{Z}$, form a component $\cy$ of type $Q$ (these are the projectives),
\item[$\cdot$]
the $M_{a,-\infty}$, $a\in\mathbb{Z}$, form a component $\cz$ of type $Q$ (these are the injectives),
\item[$\cdot$] the representation $M_{+\infty,-\infty}$ forms a component $\ca$ of type $A_1$.
 \end{itemize}
The first component $\cx$ is the Gabriel quiver of $\rep Q$. This category has Auslander--Reiten sequences, and the Auslander--Reiten translation $\tau=s_*^{-1}$ sends $M_{a,b}$ ($a,b\in\mathbb{Z}$, $a>b$) to $M_{a+1,b+1}$.

\subsubsection{The generalized zigzag orientation}\label{sss:zigzag}
Let $p\geq q$ be positive integers. Let $Q=Q^z$ be the quiver of type $A_\infty^\infty$ with the following generalized zigzag orientation:
\[
{\scriptsize
\begin{xy} 0;<0.75pt,0pt>:<0pt,-0.55pt>::
(15,75) *+{\cdots}="",
(30,150) *+{\cdot}="0",
(60,100) *+{\cdot}="1",
(90,50) *+{\cdot}="2",
(120,0) *+{-p-q+1}="3",
(150,50) *+{-p-q+2}="4",
(180,100) *+{-q}="5",
(210,150) *+{-q+1}="6",
(240,100) *+{-q+2}="7",
(270,50) *+{0}="8",
(300,0) *+{1}="9",
(330,50) *+{2}="10",
(360,100) *+{p}="11",
(390,150) *+{p+1}="12",
(420,100) *+{p+2}="13",
(450,50) *+{p+q}="14",
(480,0) *+{p+q+1}="15",
(495,75) *+{\cdots}="",
"1", {\ar "0"}, "2", {\ar@{.} "1"}, "3", {\ar "2"},
"3", {\ar "4"}, "4", {\ar "5"}, "5", {\ar "6"},
"7", {\ar "6"}, "8", {\ar@{.} "7"}, "9", {\ar "8"},
"9", {\ar "10"}, "10", {\ar@{.} "11"}, "11", {\ar "12"},
"13", {\ar "12"}, "14", {\ar@{.} "13"}, "15", {\ar "14"},
\end{xy}
}
\]
Let $s$ denote the unique automorphism of $Q$ induced by $i\mapsto i+p+q$.

All indecomposable projective (respectively, injective) representations are finite-dimensional.  The Gabriel quiver of $\Rep Q$ consists of nine connected components:
\begin{itemize}
\item[$\cdot$] the preprojective indecomposable representations form a component $\cp$ of type $\mathbb{N}Q^{op}$,
\item[$\cdot$] the representations $\bigsqcup_{n\in\mathbb{Z}}s_*^n\{M_{p+2,1},M_{1,0},\ldots,M_{-q+3,-q+2}\}$ and the iterated extensions of them form a regular component $\cx^1$ of type $\mathbb{Z}A_\infty$,
\item[$\cdot$] the representations $\bigsqcup_{n\in\mathbb{Z}}s_*^n\{M_{2,-q+1},M_{3,2},\ldots,M_{p+1,p}\}$ and the iterated extensions of them form a regular component $\cx^2$ of type $\mathbb{Z}A_\infty$,
\item[$\cdot$] the $M_{+\infty,b}$, $b\equiv -q+2,\ldots,1~(\mathrm{mod}~p+q)$, form a component $\cy^1$ of type $Q^{l}$,
\item[$\cdot$] the $M_{+\infty,b}$, $b\equiv 2,\ldots,p+1~(\mathrm{mod}~p+q)$, form a component $\cy^2$ of type $Q^{l}$,
\item[$\cdot$] the $M_{a,-\infty}$, $a\equiv 1,\ldots,p~(\mathrm{mod}~p+q)$, form a component $\cz^1$ of type $Q^{l}$,
\item[$\cdot$] the $M_{a,-\infty}$, $a\equiv p+1,\ldots,p+q~(\mathrm{mod}~p+q)$, form a component $\cz^2$ of type $Q^{l}$,
\item[$\cdot$] the preinjective indecomposable representations form a component $\ci$ of type $(-\mathbb{N})Q^{op}$,
\item[$\cdot$] the representation $M_{+\infty,-\infty}$ forms a component $\ca$ of type $A_1$.
\end{itemize}
The components $\cp$, $\cx^1$, $\cx^2$ and $\ci$ form the Gabriel quiver of $\rep Q$. This category has Auslander--Reiten sequences, and on the two regular components $\cx^1$ and $\cx^2$ the Auslander--Reiten translation acts respectively as
\[\tau M_{p+2,1}=M_{1,0},~~\tau
M_{1,0}=M_{0,-1},~~\ldots,~~\tau M_{-q+3,-q+4}=M_{-q+3,-q+2},\]
\[\text{ and }\tau M_{-q+3,-q+2}=M_{-q+2,-p-q+1}=s_*^{-1}M_{p+2,1}.\]
\[\tau M_{2,-q+1}=M_{3,2},~~\tau
M_{3,2}=M_{4,3},~~\ldots,~~\tau M_{p,p-1}=M_{p+1,p},\]
\[\text{ and } \tau
M_{p+1,p}=M_{p+q+1,p+1}=s_*M_{2,-q+1}.\] In
particular, on $\cx^1$ and $\cx^2$ we have $\tau^q=s_*^{-1}$ and $\tau^p=s_*$, respectively.

\subsection{Graded modules over $\Gamma(p,q,r)$}\label{ss:graded-module-over-graded-affine-A}

Fix a triple $(p,q,r)\in\Pi$ and let $\Gamma(p,q,r)$ be the graded
algebra defined in the beginning of this section. If $r=0$, it is a
tame hereditary algebra and $\Grmod\Gamma(p,q,0)$ is the direct sum
of $\mathbb{Z}$ copies of $\Mod\Gamma(p,q,0)$. In the sequel of this
subsection, we assume $r\neq 0$. Let $Q$ be the following quiver
\begin{itemize}
\item[$\cdot$] if $p=0$, $Q$ is the disjoint union of $|r|$ copies of $Q^l$ defined in Section~\ref{sss:linear};
\item[$\cdot$] if $p>0$, $Q$ is the disjoint union of $|r|$ copies of $Q^z$ defined in Section~\ref{sss:zigzag}.
\end{itemize}
The vertices are labeled by $\{(j,i)|0\leq j\leq
|r|-1,i\in\mathbb{Z}\}$. Define $\sigma$ to be the unique
automorphism of $Q$ which takes the following values on vertices:
\[\sigma(j,i)=\begin{cases} (j-1,i) & \text{ if } 1\leq j\leq |r|-1,\\ (|r|-1,s^{\mathrm{sgn}(r)}(i)) & \text{ if } j=0 \text{ and } p\neq 0,\\
(|r|-1,s^{q\cdot\mathrm{sgn}(r)}(i)) & \text{ if } j=0 \text{ and } p=0,
\end{cases}\]
where $s$ was defined in
Section~\ref{ss:representation-a-double-infty}, and
$\mathrm{sgn}(r)$ is the sign of $r$, \ie
\[\mathrm{sgn}(r)=\begin{cases} 1 & \text{ if } r>0;\\ -1 & \text{ if } r<0.\end{cases}\]
In particular, $\sigma^r$ is an automorphism on each connected component and $$\sigma^r=\begin{cases} s & \text{ if } p\neq 0\\ s^q & \text{ if } p=0.\end{cases}$$

\begin{lemma} There is an equivalence $C:\Grmod\Gamma(p,q,r)\simeq
\Rep(Q)$ such that $C\circ\langle 1\rangle=\sigma_*\circ C$, which restricts to an equivalence $\grmod_0\Gamma(p,q,r)\simeq\rep Q$.
\end{lemma}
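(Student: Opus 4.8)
The plan is to realize a graded $\Gamma(p,q,r)$-module concretely as a representation of the graded quiver and then separate homogeneous components, which produces a representation of a covering quiver that I identify with $Q$; this is an instance of the equivalence between graded modules and modules over the associated smash product (Cohen--Montgomery duality), but here it is transparent enough to build by hand.

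First I would construct $C$ directly. Since $\Gamma=\Gamma(p,q,r)$ is a (complete) path algebra of a quiver with vertex set $\{1,\dots,p+q\}$ and no relations, a graded $\Gamma$-module $M$ is the same datum as a graded vector space $M_v=e_vM=\bigoplus_{n\in\mathbb Z}M_v^n$ for each vertex $v$ together with, for each arrow $\alpha\colon v\to w$ of degree $d$, a family of linear maps $M_\alpha^n\colon M_v^n\to M_w^{n+d}$; here $d=0$ for all arrows except $\alpha_{p+q}$, for which $d=r$. I would set $C(M)$ to be the representation of the quiver $\tilde Q$ with vertices $(v,n)$, $v\in\{1,\dots,p+q\}$, $n\in\mathbb Z$, and an arrow $(v,n)\to(w,n+d)$ for every arrow $\alpha\colon v\to w$ of $\Gamma$ of degree $d$, by putting $C(M)_{(v,n)}=M_v^n$ and letting the lifted arrow act by $M_\alpha^n$. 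Reassembling the homogeneous pieces gives a quasi-inverse, so $C\colon\Grmod\Gamma\to\Rep\tilde Q$ is an equivalence onto the category of representations of this covering quiver $\tilde Q$; it restricts to $\grmod_0\Gamma\simeq\rep\tilde Q$, since $M$ has finite total dimension if and only if $C(M)$ is finitely supported with finite-dimensional spaces. The completion for $p=0$ causes no trouble: as $\deg\alpha_{p+q}=r\neq0$, in each fixed degree only finitely many paths act, so graded modules over the completed and the ordinary path algebra coincide.

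The heart of the argument is the identification $\tilde Q\cong Q$. Because every arrow but $\alpha_{p+q}$ has degree $0$, the level $n$ is constant along each branch and jumps by $r$ only across $\alpha_{p+q}$; hence $(v,n)$ and $(v',n')$ lie in the same connected component of $\tilde Q$ if and only if $n\equiv n'\pmod{|r|}$, giving exactly $|r|$ components. For $p=0$ the base is the oriented $q$-cycle and each component unwinds to a single bi-infinite line, i.e.\ a copy of $Q^l$ (Section~\ref{sss:linear}); for $p>0$ the two branches of the $\widetilde A_{p,q}$-quiver, of lengths $p$ and $q$, unwind with the sources $(p+1,n)$ and sinks $(1,n)$ alternating, reproducing the generalized zigzag $Q^z$ with parameters $p,q$ (Section~\ref{sss:zigzag}). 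After fixing the bijection between the components and $\{0,\dots,|r|-1\}$ and the vertex labelling inside each component, one checks $\tilde Q\cong Q$.

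Finally I would track the degree shift. By construction $C(M\langle1\rangle)_{(v,n)}=M_v^{n+1}=C(M)_{(v,n+1)}$, so $C\circ\langle1\rangle$ equals $C$ postcomposed with the push-forward along the level-shift automorphism $(v,n)\mapsto(v,n\mp1)$ of $\tilde Q$, and transporting this automorphism through $\tilde Q\cong Q$ is exactly what produces $\sigma$. Shifting the level by $1$ moves cyclically among the $|r|$ components, accounting for $j\mapsto j-1$, while one full loop through all components shifts the level by $|r|$ and hence acts within a single component by $s^{\mathrm{sgn}(r)}$; this yields $\sigma^r=s$ for $p>0$ and $\sigma^r=s^q$ for $p=0$, since one period of $Q^z$ is one step of $s$ whereas one period of the $q$-cycle is $q$ steps of $s$ on $Q^l$. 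I expect the main obstacle to be purely bookkeeping: pinning down the orientation conventions and the precise relabelling $(v,n)\leftrightarrow(j,i)$ so that the level-shift automorphism coincides on the nose with the $\sigma$ defined above, including the signs governed by $\mathrm{sgn}(r)$ and the $p=0$ versus $p>0$ dichotomy.
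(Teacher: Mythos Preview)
Your approach is correct and is essentially the same as the paper's: both invoke covering theory to unwind the graded $\Gamma(p,q,r)$-modules into representations of the covering quiver. The paper's proof is much terser---it simply declares the result ``standard in covering theory'' and writes down the explicit formula $V(j,i)=M^{-\lfloor (i-1)/(p+q)\rfloor r+j}\,e_{i-(p+q)\lfloor (i-1)/(p+q)\rfloor}$ in one line, whereas you spell out the intermediate covering quiver $\tilde Q$, argue why it has $|r|$ components of the correct type, and then verify the $\sigma$-compatibility; but the underlying idea is identical.
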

\begin{proof} This is standard in covering theory. The functor $C$ takes a graded module $M=\bigoplus_{n\in\mathbb{Z}} M^n$ to the representation $V$ with the vector space $V(j,i)$ associated to the vertex $(j,i)$ being $M^{-\lfloor \frac{i-1}{p+q}\rfloor r+j} e_{i-(p+q)\lfloor\frac{i-1}{p+q}\rfloor r}$. Here for a rational number $x$, we denote by $\lfloor x\rfloor$ the greatest integer smaller than or equal to $x$.
\end{proof}

\subsection{The derived category of $\Gamma(p,q,r)$}\label{ss:derived-cat-of-graded-tildeA}

Let $(p,q,r)\in\Pi$. Assume $r\neq 0$.  As $\Gamma(p,q,r)$ is graded hereditary, all indecomposable objects in $\cd(\Grmod \Gamma(p,q,r))$ are shifts of indecomposable graded modules. We have a commutative diagram of triangle functors
\[
\xymatrix{
\cd^b(\Grmod\Gamma(p,q,r))/\Sigma\circ\langle-1\rangle\ar[r]&\cd(\Gamma(p,q,r))\\
\ch^b(\grproj\Gamma(p,q,r))/\Sigma\circ\langle-1\rangle\ar[r]\ar[u]&\per(\Gamma(p,q,r))\ar[u]\\
\cd^b(\grmod_0\Gamma(p,q,r))/\Sigma\circ\langle-1\rangle\ar[r]\ar[u]&\cd_{fd}(\Gamma(p,q,r)),\ar[u]
}
\]
where the horizontal functors are equivalences by Theorem~\ref{thm:tot-is-orbital}(e) and the vertical functors are fully faithful. When $p> 0$, the two lower vertical functors are equalities; when $p=0$, they are not dense.
The Gabriel quiver of $\cd^b(\Grmod \Gamma(p,q,r))$ (respectively, $\ch^b(\grproj\Gamma(p,q,r))$, $\cd^b(\grmod_0\Gamma(p,q,r))$) admits a $\mathbb{Z}$-action via $\Sigma\circ\langle-1\rangle$. The Gabriel quiver of $\cd(\Gamma(p,q,r))$ (respectively, $\per(\Gamma(p,q,r))$, $\cd_{fd}(\Gamma(p,q,r))$) is the corresponding orbit quiver.  See Section~\ref{ss:orbit-category}. The category $\cd^b(\grmod_0 \Gamma(p,q,r))$ has Auslander--Reiten triangles, so does $\cd_{fd}(\Gamma(p,q,r))$. The Auslander--Reiten quiver of $\cd^b(\grmod_0\Gamma(p,q,r))$ admits a $\mathbb{Z}$-action via $\Sigma\circ\langle-1\rangle$, and the Auslander--Reiten quiver of $\cd_{fd}(\Gamma(p,q,r))$ is the corresponding orbit quiver.

\subsubsection{The case $p=0$}

By Sections~\ref{ss:representation-a-double-infty} and~\ref{ss:graded-module-over-graded-affine-A}, the
Gabriel quiver of $\cd^b(\Grmod\Gamma(0,q,r))$ consists of $4|r|\times\mathbb{Z}$ connected components: \begin{itemize}
\item[$\cdot$] $\cx_{j,i}$ of type $\mathbb{Z}A_\infty$,
\item[$\cdot$] $\cy_{j,i}$ of type $Q^l$,
\item[$\cdot$] $\cz_{j,i}$ of type $Q^l$,
\item[$\cdot$] $\ca_{j,i}$ of type $A_1$,
\end{itemize}
where $0\leq j\leq |r|-1$ and $i\in\mathbb{Z}$. For $C\in\{\cx,\cy,\cz,\ca\}$, the suspension functor $\Sigma$ acts as $\Sigma C_{j,i}=C_{j,i+1}$, and the degree shifting $\langle 1\rangle$ acts as $C_{j,i}\langle 1\rangle=C_{j-1,i}$.
The Gabriel quiver of $\ch^b(\grproj \Gamma(0,q,r))$ is formed by the components $\cx_{j,i}$ and $\cy_{j,i}$. The category $\cd^b(\grmod_0\Gamma(0,q,r))$ has Auslander--Reiten triangles, and its Auslander--Reiten quiver is formed by the components $\cx_{j,i}$. On each $\cx_{j,i}$, the Auslander--Reiten translation satisfies
$\tau^q=\langle -r\rangle$.

Therefore the Gabriel quiver of $\cd(\Gamma(0,q,r))\simeq\cd^b(\Grmod\Gamma(0,q,r))/\Sigma\circ\langle-1\rangle$ consists of
\begin{itemize}
\item[$\cdot$] $|r|$ components $\cx_j$ ($0\leq j\leq |r|-1$) of type $\mathbb{Z}A_\infty$,
\item[$\cdot$] $2|r|$ components $\cy_j$, $\cz_j$ ($0\leq j\leq |r|-1$) of type $Q^l$,
\item[$\cdot$] $|r|$ components $\ca_j$ ($0\leq j\leq |r|-1$) of type $A_1$.
\end{itemize}
The suspension functor acts as $\Sigma C_j=C_{j-1}$ ($0\leq j\leq |r|-1$) for $C\in\{\cx,\cy,\cz,\ca\}$.
The Gabriel quiver of $\per(\Gamma(0,q,r))\simeq\ch^b(\grproj\Gamma(0,q,r))/\Sigma\circ\langle-1\rangle$ is formed  by the components
$\cx_j$ and $\cy_j$, $0\leq j\leq |r|-1$. The category $\cd_{fd}(\Gamma(0,q,r))\simeq\cd^b(\grmod_0\Gamma(p,q,r))/\Sigma\circ\langle-1\rangle$ have Auslander--Reiten triangles and the Auslander--Reiten quiver of $\cd_{fd}(\Gamma(0,q,r))$ is formed by the components $\cx_j$, $0\leq j\leq |r|-1$. For each object $X$ in $\cx_j$, we have $\tau^q(X)=X\langle -r\rangle=\Sigma^{-r} X$.

\subsubsection{The case $p\neq 0$}\label{sss:AR-quiver-no-cycle}

By Sections~\ref{ss:representation-a-double-infty} and~\ref{ss:graded-module-over-graded-affine-A}, the
Gabriel quiver of $\cd^b(\Grmod\Gamma(p,q,r))$ consists of $8|r|\times\mathbb{Z}$ connected components:
\begin{itemize}
\item[$\cdot$] $\cp_{j,i}$ of type $\mathbb{Z}A_\infty^\infty$ (properly glued from $\mathbb{N}(Q^z)^{op}$ and $(-\mathbb{N})(Q^z)^{op}$),
\item[$\cdot$] $\cx^1_{j,i}$ of type $\mathbb{Z}A_\infty$,
\item[$\cdot$] $\cx^2_{j,i}$ of type $\mathbb{Z}A_\infty$,
\item[$\cdot$] $\cy^1_{j,i}$ of type $Q^l$,
\item[$\cdot$] $\cy^2_{j,i}$ of type $Q^l$,
\item[$\cdot$] $\cz^1_{j,i}$ of type $Q^l$,
\item[$\cdot$] $\cz^2_{j,i}$ of type $Q^l$,
\item[$\cdot$] $\ca_{j,i}$ of type $A_1$,
\end{itemize}
where $0\leq j\leq |r|-1$ and $i\in\mathbb{Z}$. For $C\in\{\cp,\cx^1,\cx^2,\cy^1,\cy^2,\cz^1,\cz^2,\ca\}$, the suspension functor $\Sigma$ acts as $\Sigma C_{j,i}=C_{j,i+1}$, and the degree shifting $\langle 1\rangle$ acts as $C_{j,i}\langle 1\rangle=C_{j-1,i}$.
The category $\ch^b(\grproj\Gamma(p,q,r))=\cd^b(\grmod_0\Gamma(p,q,r))$ has Auslander--Reiten triangles and its Auslander--Reiten quiver is formed by the components $\cp_{j,i}$, $\cx^1_{j,i}$ and $\cx^2_{j,i}$. On each $\cx^1_{j,i}$, the Auslander--Reiten translation acts as $\tau^q=\langle -r\rangle$, and on each $\cx^2_{j,i}$, the Auslander--Reiten translation acts as $\tau^p=\langle r\rangle$.

Therefore the Gabriel quiver of $\cd(\Gamma(p,q,r))\simeq\cd^b(\Grmod\Gamma(p,q,r))/\Sigma\circ\langle-1\rangle$ consists of
\begin{itemize}
\item[$\cdot$] $|r|$ components $\cp_j$ ($0\leq j\leq |r|-1$) of type $\mathbb{Z}A_\infty^\infty$,
\item[$\cdot$] $2|r|$ components $\cx^1_j$, $\cx^2_j$ ($0\leq j\leq |r|-1$) of type $\mathbb{Z}A_\infty$,
\item[$\cdot$] $4|r|$ components $\cy^1_j$, $\cy^2_j$, $\cz^1_j$, $\cz^2_j$ ($0\leq j\leq |r|-1$) of type $Q^l$,
\item[$\cdot$] $|r|$ components $\ca_j$ ($0\leq j\leq |r|-1$) of type $A_1$.
\end{itemize}
For $C\in\{\cp,\cx^1,\cx^2,\cy^1,\cy^2,\cz^1,\cz^2,\ca\}$, the suspension functor acts as $\Sigma C_j=C_{j-1}$ ($0\leq j\leq |r|-1$).
The category $\per(\Gamma(p,q,r))=\cd_{fd}(\Gamma(p,q,r))$ has Auslander--Reiten triangles.
Its Auslander--Reiten quiver is formed  by the components $\cp_j$, $\cx^1_j$ and $\cx^2_j$, $0\leq j\leq |r|-1$. For each object $X$ in $\cx^1_j$, we have $\tau^q(X)=X\langle -r\rangle=\Sigma^{-r} X$. For each object $X$ in $\cx^2_j$, we have $\tau^p(X)=X\langle r\rangle=\Sigma^{r}X$.

\subsection{The Koszul dual side}\label{ss:the-koszul-dual-side}

Let $(p,q,r)\in\Pi$. To each vertex $i$ of the graded quiver of $\Gamma=\Gamma(p,q,r)$, we associate a one-dimensional simple graded module $S_i$ which is concentrated in degree $0$. It is easily seen that \[\cd^b(\grmod_0\Gamma)=\thick_{\cd(\Grmod\Gamma)}(S_i\langle j\rangle| i=1,\ldots,p+q,j\in\mathbb{Z}).\] Therefore \[\cd_{fd}(\Gamma)=\cd^b(\grmod_0\Gamma)/\Sigma\circ\langle-1\rangle=\thick_{\cd(\Gamma)}(S_i|i=1,\ldots,p+q).\]
Moreover, each $S_i$ belongs to $\per(\Gamma)$, so $\cd_{fd}(\Gamma)\subseteq\per(\Gamma)$.

Let $\Gamma^*$ be the Koszul dual of $\Gamma$, that is, the dg endomorphism algebra of an $\ch$-projective resolution of $\bigoplus_{i=1}^{p+q}S_i$.

\subsubsection{The case $p=0$}  \label{sss:AR-quiver-truncated-cycle}
In this case, $\cd_{fd}(\Gamma)\subset\per(\Gamma)$. Therefore by~\cite[Lemma 10.5, the `symmetric' case]{Keller94}, we have an embedding $\cd(\Gamma^*)\rightarrow\cd(\Gamma)$. The essential image of $\cd(\Gamma^*)$ is $\Tria_{\cd(\Gamma)}(S_i|i=1,\ldots,q)$, the smallest triangulated subcategory of $\cd(\Gamma)$ containing the simples and closed under taking all direct sums. If we take the following $\ch$-projective resolutions of simple modules
\[
e_{i-1}\Gamma [-\delta_{i,1}r+1]\oplus e_i\Gamma ,~~d=\left(\begin{array}{cc}0 & 0\\ \alpha_{i-1} & 0\end{array}\right)
\]
(where the indices are taken modulo $q$),
then it is straightforward to check that $\Gamma^*$ has a dg subalgebra which has trivial differential and which is the quotient of the path algebra of the graded quiver
\[\xymatrix@R=0.7pc@C=0.7pc{&q\ar[r]&\cdot\ar@{.}[r]&\cdot\ar[r]&i+1\ar[dr]^{\alpha_i^*}&\\
1\ar[ur]^{\alpha_{q}^*}&&&&&i\ar[dl]^{\alpha_{i-1}^*},\\
&2\ar[ul]^{\alpha_1^*}&\cdot\ar[l]^{\alpha_2^*}\ar@{.}[r]&\cdot&i-1\ar[l]&}\]
modulo all paths of length two, where $\deg(\alpha_i^*)=1-\delta_{i,q}r$. It is clear that $\Gamma^*$ is graded equivalent to the quotient $\Gamma'(0,q,q-r)$ of $\Gamma(0,q,q-r)^{op}$ modulo all paths of length two. Notice that $\Gamma'(0,q,q-r)=\Gamma'(q,r)$, which is defined in Theorem~\ref{t:main-thm-1.1}.

If $r\neq 0$:  In this case the embedding $\cd(\Gamma^*)\to\cd(\Gamma)$ restricts to triangle equivalences $\per(\Gamma^*)\to\cd_{fd}(\Gamma)$ and $\cd_{fd}(\Gamma^*)\to\thick_{\cd(\Gamma)}(\Hom_k(\Gamma,k))$. So $\per(\Gamma^*)$ has Auslander--Reiten triangles and its Auslander--Reiten quiver consists of $|r|$ components $\cx^*_j$ ($1\leq j\leq |r|$) of type $\mathbb{Z}A_\infty$ (where $\tau=\Sigma^{-r}$ on objects). All indecomposable objects of $\Tria_{\cd(\Gamma)}(S_i|i=1,\ldots,q)$ belong to $\thick_{\cd(\Gamma)}(\Hom_k(\Gamma,k))$, whose  Gabriel quiver consists of the components $\cx_j$ and $\cz_j$ ($1\leq j\leq |r|$). So all indecomposable objects of $\cd(\Gamma^*)$ belong to $\cd_{fd}(\Gamma^*)$, whose Gabriel quiver of  consists of $\cx^*_j$ ($1\leq j\leq |r|$) and $|r|$ components $\cz^*_j$ ($1\leq j\leq |r|$) of type $Q^l$.

If $r=0$: The embedding $\cd(\Gamma^*)\to\cd(\Gamma)$ restricts to a triangle equivalence $\per(\Gamma^*)\to\cd_{fd}(\Gamma)$, which is exactly the bounded derived category of the standard tube of rank $q$. Therefore the Auslander--Reiten quiver of $\per(\Gamma^*)$ consists of $\mathbb{Z}$ connected components, each of which is a tube of rank $q$. The indecomposable objects of $\Tria_{\cd(\Gamma)}(S_i|i=1,\ldots,q)$ are shifts of indecomposable finite-dimensional $\Gamma$-modules and of the Pr\"ufer modules, the injective envelopes of the simple modules. As a consequence, the projective $\Gamma^*$-modules have to be sent to the Pr\"ufer modules. It follows that all indecomposable objects of $\cd(\Gamma^*)$ belong to $\cd_{fd}(\Gamma^*)$, whose Gabriel quiver consists of $\mathbb{Z}$ tubes of rank $q$ and $\mathbb{Z}$ cyclic quivers with $q$ vertices.

\subsubsection{The case $p\neq 0$}
In this case, we have $\per(\Gamma)=\cd_{fd}(\Gamma)$. Therefore by~\cite[Lemma 10.5, the `finite' case]{Keller94}, we have triangle equivalences $\cd(\Gamma)\simeq\cd(\Gamma^*)$ and $\per(\Gamma)\simeq\per(\Gamma^*)=\cd_{fd}(\Gamma^*)$.

\section{Graded gentle one-cycle algebras}\label{s:graded-gentle-one-cycle-alg}

In this section we show that a graded
gentle one-cycle algebra is derived equivalent to a graded quiver of type
$\tilde{A}_{p,q}$. In particular, with the results in Section~\ref{s:graded-a-pq}, this gives a new approach
to the description of the Auslander--Reiten quiver of the derived category of a gentle one-cycle algebra by
Bobi\'{n}ski--Geiss--Skowro\'nski~\cite{BobinskiGeissSkowronski04}.

\smallskip
Recall that there is an index set: $\Omega=\{(r,n,m)\in\mathbb{Z}^3|n\geq r\geq 1,m\geq 0\}$. We first introduce graded (respectively, trigraded) versions of $\Lambda(r,n,m)$ and bigraded versions of $\Gamma(p,q,r)$.
\begin{itemize}
\item[--]
For $(r,n,m)\in\Omega$ and $d\in\mathbb{Z}$ (respectively, $d\in\mathbb{Z}^3$), let $\Lambda(r,n,m,d)$ be the graded (respectively, trigraded) algebra whose underlying algebra is $\Lambda(r,n,m)$ and whose grading is defined by $\deg(\alpha_i)=\delta_{i,n-1}d$ ($-m\leq i\leq n-1$).
\item[--]
For integers $p,q,r_1,r_2$ with $(p,q,r_1),~(p,q,r_2)\in\Pi$, let $\Gamma(p,q,r_1,r_2)$ be the bigraded algebra whose underlying algebra is the complete path algebra of the quiver in the beginning of Section~\ref{s:graded-a-pq} with  $\mathrm{deg}(\alpha_i)=\delta_{i,p+q}(r_1,r_2)$. We have $F_1\Gamma(p,q,r_1,r_2)=\Gamma(p,q,r_2)$ and $F_2\Gamma(p,q,r_1,r_2)=\Gamma(p,q,r_1)$.
\end{itemize}

\begin{proposition}\label{p:A-L-derived-graded-here}
Let $d\in\mathbb{Z}$ and $(r,n,m)\in\Omega$.
\begin{itemize}
\item[(a)]
If $n>r$, the graded algebra $\Lambda(r,n,m,d)$ is derived equivalent to
$\Gamma(n-r,m+r,r-d)$.
\item[(b)]
If $n=r$, the graded algebra $\Lambda(n,n,m,d)$ is derived equivalent to
the quotient $\Gamma'(0,n+m,m+d)$ of $\Gamma(0,n+m,m+d)^{op}$ modulo
all paths of length $2$.
\end{itemize}
\end{proposition}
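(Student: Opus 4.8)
The plan is to obtain both statements from the grading-change result, Proposition~\ref{p:derived-equiv-graded-alg-4}, applied to a bigraded lift of the graded \emph{hereditary} algebra $\Gamma$, over which all homological computations are transparent. For part~(a) I would put $p=n-r$ and $q=m+r$ (so that $p+q=n+m$ and $r-d$ is the value prescribed by the statement) and work with the bigraded algebra $A=\Gamma(p,q,r-d,r_2)$, in which the distinguished arrow $\alpha_{p+q}$ carries the bidegree $(r-d,r_2)$; the second Adams degree $r_2$ is a free parameter to be pinned down at the end. By construction $F_2A=\Gamma(p,q,r-d)$ is exactly the algebra on the right-hand side of~(a), while $F_1A=\Gamma(p,q,r_2)$ is the hereditary algebra over which the tilting hypothesis will be checked. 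Everything then hinges on producing a bounded complex $M$ of finitely generated bigraded projective $A$-modules, with $p+q$ indecomposable summands, such that $\{M\langle(i,j)\rangle\}$ compactly generates $\cd(\Grmod A)$ and such that $\Tot F_2 M$ carries the pure oriented cycle of $\tilde{A}_{p,q}$ over to the cycle-with-tail underlying $\Lambda(r,n,m)$.

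I would build $M$ by iterated reflections on $\Gamma$: reflecting the $m$ vertices of the arm of length $q=m+r$ that lie nearest the sink unfolds them into a tail of length $m$, while the remaining $r$ reflections along that arm create the $r$ length-two zero relations that distinguish $\Lambda(r,n,m)$, each reflection contributing a complex shift together with an Adams shift across the graded arrow $\alpha_{p+q}$. Since $\Gamma$ is graded hereditary, every indecomposable of $\cd^{b}(\Grmod A)$ is a shift of a bigraded module, so the trigraded endomorphism algebra $B=\bigoplus_{i,j,l}\Hom_{\cd(\Grmod A)}(M,\Sigma^{i}M\langle(j,l)\rangle)$ can be read off from the explicit indecomposables and Auslander--Reiten data recalled in Section~\ref{ss:representation-a-double-infty}. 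The role of $r_2$ is to force $B$ to be concentrated in tridegrees $(i,\ast,-i)$; by Proposition~\ref{p:derived-equiv-graded-alg-4} this is precisely the assertion that $\Tot F_1 M$ is a tilting object over the hereditary algebra $F_1A$ (its endomorphism dg algebra being concentrated in degree zero), and it is the only genuine input needed. Totalizing in the complementary direction gives $\Tot F_2 B$, which a bookkeeping of the accumulated degrees should identify with $\Lambda(r,n,m,d)$, the degree $d$ on $\alpha_{n-1}$ emerging from the residual first Adams grading together with the value $r-d$; the passage from formality to the honest algebra is Lemma~\ref{l:transfering-formalness-bigraded-endo} followed by Lemma~\ref{l:formal-dg-alg}.

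Part~(b) is the degenerate case $n=r$, hence $p=0$, where $\Lambda(n,n,m)$ has infinite global dimension and the companion hereditary algebra is $\Gamma(0,n+m,\cdot)$, for which $\per\subsetneq\cd$. Here a tilting complex over $\Gamma$ cannot have hereditary endomorphism algebra, and the correct partner is the radical-square-zero algebra $\Gamma'$ rather than a hereditary one. Concretely I would take $M$ (up to shifts) to be the $\ch$-projective resolution of the direct sum of the simple modules used in Section~\ref{sss:AR-quiver-truncated-cycle}, whose bigraded dg endomorphism algebra is already shown there to be formal with cohomology the truncated algebra $\Gamma'(0,q,q-r_0)=\Gamma'(q,r_0)$; comparing parameters ($q=n+m$ and $q-r_0=m+d$, i.e.\ $r_0=n-d$) turns the outcome of the grading change into $\Gamma'(0,n+m,m+d)$. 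The infinite global dimension is harmless for this argument, since Proposition~\ref{p:derived-equiv-graded-alg-4} only requires $\Tot F_2 M$ to be a compact generator and the $F_1$-side endomorphism algebra to be concentrated in degree zero.

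The main obstacle I expect is the explicit determination of the trigraded algebra $B$. One must write $M$ down precisely, compute all graded $\Hom$- and $\Ext$-spaces between its summands and their Adams twists over the hereditary algebra $\Gamma$, and then verify \emph{simultaneously} the tridegree concentration $(i,\ast,-i)$ for the chosen $r_2$ and the isomorphism $\Tot F_2 B\cong\Lambda(r,n,m,d)$ (respectively $\Gamma'(0,n+m,m+d)$) of graded algebras, with the degree-$d$ arrow sitting in the correct place. Once $B$ is computed, generation is immediate from $M$ being a tilting complex, and the remaining steps---the application of Proposition~\ref{p:derived-equiv-graded-alg-4} and the transfer of formality---are then formal.
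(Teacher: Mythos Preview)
Your plan for part~(a) is essentially the paper's proof: set $p=n-r$, $q=m+r$, work over the bigraded hereditary algebra $\Gamma(p,q,r-d,r_2)$, construct a complex $M$ of bigraded projectives whose $F_1$-totalisation is a tilting object with endomorphism algebra $\Lambda(r,n,m)$, and invoke Proposition~\ref{p:derived-equiv-graded-alg-4}. The paper makes the specific choice $r_2=r$, writes down the summands $T_i$ explicitly as cones on maps between indecomposable projectives (rather than describing them as iterated reflections), and computes the trigraded endomorphism algebra to be $\Lambda(r,n,m,r,-r+d,-r)$, which is concentrated in tridegrees $(i,*,-i)$ as required. Your description is less concrete but the strategy is the same.

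For part~(b), however, there is a genuine gap. The complex you propose---the $\ch$-projective resolution of the direct sum of simples over the hereditary algebra $\Gamma=\Gamma(0,n+m,\cdot)$---has endomorphism dg algebra $\Gamma^*\cong\Gamma'$ by Section~\ref{sss:AR-quiver-truncated-cycle}, so applying the grading-change machinery to it relates graded forms of $\Gamma$ and $\Gamma'$, not $\Lambda$ and $\Gamma'$. The algebra $\Lambda(n,n,m,d)$ never appears in your construction. Moreover, when $p=0$ the simples do \emph{not} compactly generate $\cd(\Gamma)$ (only $\cd_{fd}(\Gamma)\subsetneq\per(\Gamma)$), so the generation hypothesis of Proposition~\ref{p:derived-equiv-graded-alg-4} fails for this choice of $M$ over $\Gamma$.

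The paper's approach for~(b) is different: it takes the \emph{bigraded base algebra} to be $\Gamma'=\Gamma'(0,n+m,m+d,m)$ itself (not the hereditary $\Gamma$), and constructs an explicit bounded complex $T$ of projective $\Gamma'$-modules---for $-m\le i\le 0$ a staircase complex of length $m+i+1$ and for $1\le i\le n-1$ a degree-shifted indecomposable projective---whose trigraded endomorphism algebra is computed to be $\Lambda(n,n,m,-m,m+d,m)$. Then Proposition~\ref{p:derived-equiv-graded-alg-4} applied to this $T$ over $\Gamma'$ yields the derived equivalence between $F_2\Gamma'=\Gamma'(0,n+m,m+d)$ and $\Tot F_2\Lambda(n,n,m,-m,m+d,m)=\Lambda(n,n,m,d)$. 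So the missing idea is to run the grading-change argument directly over a bigraded lift of the radical-square-zero algebra $\Gamma'$, with a hand-built tilting complex whose (ungraded) endomorphism algebra is $\Lambda(n,n,m)$.
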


\begin{proof}
(a)
Let $p=n-r$ and $q=m+r$, and let $\Gamma$ be the bigraded algebra $\Gamma(p,q,r-d,r)$. Then $F_1\Gamma=\Gamma(p,q,r)$ and $F_2\Gamma=\Gamma(p,r,r-d)$. We will apply Proposition~\ref{p:derived-equiv-graded-alg-4} to this bigraded algebra. We
construct as follows a complex of bigraded $\Gamma$-modules such that forgetting the first grading we obtain a tilting object in $\cd(\Gamma(p,q,r))$ whose endomorphism
algebra is isomorphic to $\Lambda(r,n,m)$. Let $P_i$ denote the
indecomposable projective $\Gamma$-module corresponding to the
vertex $i$ and generated in bidegree $(0,0)$, and let $S_i$ be the simple top of $P_i$. Let
$T=\bigoplus_{i=-m}^{n-1}T_i$ be the complex of finitely generated projective bigraded $\Gamma$-modules given by (where $\underline{r}=(-r+d,-r)$)
\begin{align*}
T_i&=\begin{cases}
\Cone(P_{n}\stackrel{\alpha_{n+m+i}\cdots\alpha_n}{\longrightarrow}P_{n+m+i+1})\langle\underline{r}\rangle, & \text{ if } -m\leq i\leq -1,\\
\Cone(P_n\langle\underline{r}\rangle \stackrel{\alpha_{n+m}\cdots\alpha_n}{\longrightarrow}P_1) &\text{ if } i=0,\\
\Cone(P_{n-r+2-i}\oplus P_n\langle\underline{r}\rangle\stackrel{(\alpha_1\cdots\alpha_{n-r+1-i},\alpha_{n+m}\cdots\alpha_n)}{\longrightarrow}P_1), & \text{ if } 1\leq i\leq n-r,\\
\Sigma^{n-r-i}\Cone(P_{2n-r-i}\stackrel{\alpha_{2n-r-i}}{\longrightarrow}P_{2n-r+1-i})\langle\underline{r}\rangle, & \text{ if } n-r+1\leq i\leq n-1.\\
\end{cases}\end{align*}
For $n-r+1\leq i\leq n-1$, the $T_i$'s are $\Sigma S_n\langle\underline{r}\rangle$, $\Sigma^2S_{n+1}\langle\underline{r}\rangle$, \ldots, $\Sigma^{r-1}S_{n-r+2}\langle\underline{r}\rangle$.
For $-m\leq i\leq -1$ the composition series of the $T_i$'s are:
\noindent
\[
\begin{array}{c}
(n+1)\langle\underline{r}\rangle
\end{array},
\begin{array}{c}
(n+2)\langle\underline{r}\rangle\\
(n+1)\langle\underline{r}\rangle
\end{array},
\ldots,
\begin{array}{c}
(n+m)\langle\underline{r}\rangle\\
\vdots\\
(n+2)\langle\underline{r}\rangle\\
(n+1)\langle\underline{r}\rangle
\end{array}.
\]
For $0\leq i\leq n-r$ the composition series of the $T_i$'s are:
\[
\begin{array}{*{5}{c@{\hspace{-.5pt}}}}
&1&&&\\
2 & &&(n+m)\langle\underline{r}\rangle\\
\vdots&&&\vdots\\
n-r &&& (n+1)\langle\underline{r}\rangle\\
n-r+1&&&
\end{array},
\begin{array}{*{5}{c@{\hspace{-.5pt}}}}
&1&&&\\
2 & &&(n+m)\langle\underline{r}\rangle\\
\vdots&&&\vdots\\
n-r &&& (n+1)\langle\underline{r}\rangle\\
&&&
\end{array},\ldots,
\begin{array}{*{5}{c@{\hspace{-.5pt}}}}
&1&&&\\
2 & &&(n+m)\langle\underline{r}\rangle\\
&&&\vdots\\
&&& (n+1)\langle\underline{r}\rangle\\
&&&\\
\end{array},
\begin{array}{c}
1\\
(n+m)\langle\underline{r}\rangle\\
\vdots\\
(n+1)\langle\underline{r}\rangle\\
\mbox{}
\end{array}
\]

Direct computation shows that the trigraded endomorphism algebra of
$T$ in $\cd(\Grmod\Gamma)$
\[\bigoplus_{i,j,l}\Hom(T,\Sigma^i T\langle(j,l)\rangle)\]
is isomorphic to $\Lambda(r,n,m,r,-r+d,-r)$. It satisfies the condition required in Proposition~\ref{p:derived-equiv-graded-alg-4}, so $\Gamma(p,q,r-d)=F_2 \Gamma(p,q,r-d,r)$ is derived equivalent to $\Lambda(r,n,m,d)=\Tot F_2 \Lambda(r,n,m,r,-r+d,-r)$.

(b) Let $\Gamma'=\Gamma'(0,n+m,m+d,m)$. Let $P_i$ be the indecomposable
projective $\Gamma'$-module corresponding to the vertex $i$ and
generated in degree $0$. We construct a complex of bigraded $\Gamma$-modules such that forgetting the first grading we obtain a tilting object in $\cd(\Gamma')$ whose endomorphism algebra is
isomorphic to $\Lambda(n,n,m)$. For $-m\leq i\leq 0$,
let $T_i$ be the complex (the rightmost term is in degree $0$)
\[\xymatrix{P_{m+i+1}\ar[r]^{\alpha^{op}_{m+i}}&P_{m+i}\ar[r]^{\alpha^{op}_{m+i-1}}&\ldots\ar[r]&P_2\ar[r]^{\alpha^{op}_1}&P_1,}\]
which in composition series is
\[\xymatrix{{m+i+1\atop m+i+2}\ar[r]&{m+i\atop m+i+1}\ar[r]&\ldots\ar[r]&{2\atop 3}\ar[r]& {1\atop 2}.}\]
For $1\leq i\leq n-1$, let $T_i=P_{m+1+i}\langle d+m,m\rangle$. Put $T=\bigoplus_{i=-m}^{n-1}T_i$. Then the tigraded
dg endomorphism algebra of $T$ in $\cd(\Grmod\Gamma')$
\[\bigoplus_{i,j,l}\Hom(T,\Sigma^i T\langle(j,l)\rangle)\]
is isomorphic
to $\Lambda(n,n,m,-m,m+d,m)$. So $\Gamma'(0,n+m,m+d)=F_2 \Gamma'(0,n+m,m+d,m)$ is derived equivalent to $\Lambda(n,n,m,d)=\Tot F_2 \Lambda(n,n,m,-m,m+d,m)$.
\end{proof}

We give a simple example to illustrate Proposition~\ref{p:A-L-derived-graded-here} (a).

\begin{example}
By Proposition~\ref{p:A-L-derived-graded-here}, the hereditary algebra $\Gamma(1,1,0)$, the path algebra of the (ungraded) Kronecker quiver is derived equivalent to $\Lambda(1,2,0,1)$, the graded path algebra of the graded 2-cycle
\[\xymatrix{0\ar@<.7ex>[r]^{\alpha_0}&1\ar@<.7ex>[l]^{\alpha_1}}\]
with $\alpha_0$ in degree $0$ and $\alpha_1$ in degree $1$, modulo the graded ideal generated by the path $\alpha_0\alpha_1$. The graded hereditary algebra $\Gamma(1,1,1)$, the graded path algebra of the graded Kronecker quiver with one arrow in degree $0$ and the other arrow in degree $1$, is derived equivalent to $\Lambda(1,2,0)$, obtained from $\Lambda(1,2,0,1)$ above by forgetting the grading.
\end{example}

Recall from Lemma~\ref{lem:der-equiv-induces-equiv-on-per-and-dfd} that a derived equivalence  of dg algebras restricts to triangle equivalences on the perfect derived category $\per$ and the finite-dimensional derived category $\cd_{fd}$. Combining Proposition~\ref{p:A-L-derived-graded-here} and the results in Sections~\ref{ss:derived-cat-of-graded-tildeA} and~\ref{ss:the-koszul-dual-side}, we obtain

\begin{corollary}\label{cor:AR-quiver-of-graded-A-and-L}
Let $d\in\mathbb{Z}$ and $(r,n,m)\in\Omega$.
\begin{itemize}
\item[(a)] If $n>r$ and $d\neq r$, the Auslander--Reiten quiver of $\cd_{fd}(\Lambda(r,n,m,d))$ has exactly $3|r-d|$ components $\cp_j,~\cx_j^1,~\cx_j^2$ ($1\leq j\leq |r-d|$) of type $\mathbb{Z}A_\infty^\infty$, $\mathbb{Z}A_\infty$ and $\mathbb{Z}A_\infty$, respectively. For $X\in\cx_j^1$, we have $\tau^{m+r}X=\Sigma^{-r+d}X$ and for $X\in\cx_j^2$, we have $\tau^{n-r}=\Sigma^{r-d}X$.
\item[(b)] If $n=r$ and $d\neq n$, the Gabriel quiver of $\cd_{fd}(\Lambda(n,n,m,d))$ has exactly $2|n-d|$ components $\cx_j,~\cz_j$ ($1\leq j\leq |n-d|$) of type $\mathbb{Z}A_\infty$ and $Q^l$, respectively. $\per(\Lambda(n,n,m,d))$ has Auslander--Reiten triangles and its Auslander--Reiten quiver consists of $\cx_j$ ($1\leq j\leq |n-d|$). For $X\in\cx_j$, we have $\tau^{n+m}X=\Sigma^{-n+d}X$.
\end{itemize}
\end{corollary}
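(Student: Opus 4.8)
The plan is to transport the explicit descriptions already obtained for the model algebras $\Gamma(p,q,r)$ and $\Gamma'(q,r)$ across the derived equivalences furnished by Proposition~\ref{p:A-L-derived-graded-here}. By Lemma~\ref{lem:der-equiv-induces-equiv-on-per-and-dfd} each such equivalence restricts to triangle equivalences on $\per$ and on $\cd_{fd}$, and since a triangle equivalence preserves Gabriel quivers, Auslander--Reiten triangles, the suspension $\Sigma$ and the Auslander--Reiten translation $\tau$, it suffices in each case to rewrite the known component count and translation formulas in the new parameters.

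For (a), Proposition~\ref{p:A-L-derived-graded-here}(a) yields a derived equivalence between $\Lambda(r,n,m,d)$ and $\Gamma(p,q,r')$ with $p=n-r$, $q=m+r$ and $r'=r-d$. The hypotheses $n>r$ and $d\neq r$ say exactly that $p>0$ and $r'\neq 0$, placing us in the situation treated in Section~\ref{sss:AR-quiver-no-cycle}. Restricting to $\cd_{fd}$, the description there gives $3|r'|=3|r-d|$ Auslander--Reiten components of types $\mathbb{Z}A_\infty^\infty$, $\mathbb{Z}A_\infty$, $\mathbb{Z}A_\infty$, while the formulas $\tau^qX=\Sigma^{-r'}X$ on $\cx^1$ and $\tau^pX=\Sigma^{r'}X$ on $\cx^2$ become $\tau^{m+r}X=\Sigma^{-r+d}X$ and $\tau^{n-r}X=\Sigma^{r-d}X$.

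For (b), Proposition~\ref{p:A-L-derived-graded-here}(b) gives a derived equivalence between $\Lambda(n,n,m,d)$ and $\Gamma'(0,n+m,m+d)$. Writing $q=n+m$ and $r=n-d$, one has $\Gamma'(0,n+m,m+d)=\Gamma'(0,q,q-r)$, which by Section~\ref{sss:AR-quiver-truncated-cycle} is graded equivalent to the Koszul dual $\Gamma^*$ of $\Gamma(0,q,r)$; the hypothesis $d\neq n$ is precisely $r\neq 0$. Restricting to $\cd_{fd}$, Section~\ref{sss:AR-quiver-truncated-cycle} shows that the Gabriel quiver of $\cd_{fd}(\Gamma^*)$ has $|r|=|n-d|$ components of type $\mathbb{Z}A_\infty$ together with $|r|$ of type $Q^l$, giving the stated $2|n-d|$ components. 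The triangulated subcategory $\per(\Gamma^*)$ carries the Auslander--Reiten triangles and is identified with $\cd_{fd}(\Gamma(0,q,r))$, whose $\mathbb{Z}A_\infty$-components satisfy $\tau^q=\langle-r\rangle=\Sigma^{-r}$ by the case $p=0$ of Section~\ref{ss:derived-cat-of-graded-tildeA}; substituting $q=n+m$, $r=n-d$ gives $\tau^{n+m}X=\Sigma^{-n+d}X$.

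The routine part is the substitution of parameters; the step demanding care is keeping the Auslander--Reiten translation consistent across the identifications. One uses the relation $\Sigma\cong\langle1\rangle$ valid in the orbit categories of Section~\ref{ss:derived-cat-of-graded-tildeA} to convert the degree shift $\langle-r\rangle$ into $\Sigma^{-r}$, and in case (b) one must distinguish $\cd_{fd}(\Gamma^*)$ from its subcategory $\per(\Gamma^*)$: only the latter carries Auslander--Reiten triangles and the clean formula $\tau^q=\Sigma^{-r}$, while the extra $Q^l$-components lie in $\cd_{fd}(\Gamma^*)$ but outside $\per(\Gamma^*)$. This bookkeeping is the only genuine subtlety; everything else is direct transport along the equivalences of Proposition~\ref{p:A-L-derived-graded-here}.
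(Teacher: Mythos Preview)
Your proposal is correct and follows essentially the same route as the paper, which simply states that the corollary is obtained by combining Proposition~\ref{p:A-L-derived-graded-here} with the results of Sections~\ref{ss:derived-cat-of-graded-tildeA} and~\ref{ss:the-koszul-dual-side}. You have spelled out the parameter substitutions and the bookkeeping (in particular the identification $\per(\Gamma^*)\simeq\cd_{fd}(\Gamma(0,q,r))$ and the use of $\tau^q=\Sigma^{-r}$ there) more explicitly than the paper does; one small notational caveat is that in part (b) you recycle the letter $r$ for $n-d$, which is harmless since $n=r$ there but could confuse a reader.
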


When $d=0$ we have $\Lambda(r,n,m,0)=\Lambda(r,n,m)$. In this case, Corollary~\ref{cor:AR-quiver-of-graded-A-and-L}  recovers \cite[Theorem B]{BobinskiGeissSkowronski04}.
In fact, we obtain a bit more. We also know the Gabriel quiver of $\cd(\Mod\Lambda(r,n,m))$ and that of $\ch(\Inj\Lambda(r,n,m))$, the homotopy category of complexes of injective $\Lambda(r,n,m)$-modules. For the former category, we have
\[\cd(\Mod\Lambda(r,n,m))\simeq\begin{cases} \cd(\Gamma(n-r,m+r,r)) & \text{ if } n\neq r,\\ \cd(\Gamma'(0,n+m,m)) & \text{ if } n=r.\end{cases}\]
For the latter category, we have $\ch(\Inj\Lambda(r,n,m))\simeq\cd(\Gamma(n-r,m+r,r))$. Indeed,
by~\cite[Appendix A]{Krause05} (see also~\cite[Remark of Corollary 4]{Yang12a}) we have $\ch(\Inj\Lambda(r,n,m))\simeq\cd(\Lambda(r,n,m)^*)$: if $n\neq r$, this is triangle equivalent to $\cd(\Gamma(n-r,m+r,r)^*)\simeq\cd(\Gamma(n-r,m+r,r))$; if $n=r$, this is triangle equivalent to $\cd(\Gamma'(0,n+m,m)^*)\simeq\cd(\Gamma(0,n+m,n))$ (see Section~\ref{ss:the-koszul-dual-side} for the last two $\simeq$). The category $\cd(\Mod \Lambda(r,n,m))$ is also described in \cite{ArnesenLakingPauksztelloPrest16}.

\smallskip
When we say a \emph{graded gentle one-cycle algebra}, we mean a gentle one-cycle algebra with a grading defined on each arrow. Observe that a graded $\Lambda(r,n,m)$ is graded equivalent to $\Lambda(r,n,m,d)$ for some $d\in\mathbb{Z}$.

\begin{theorem} \footnote{In Theorem~\ref{thm:graded-gentle-one-cycle-is-affine-A} the field $k$ was assumed to be algebraically closed. This assumption was needed only when we apply Theorem~\ref{t:gentle-one-cycle-clock} and Theorem\ref{t:discrete-derived-cat}(ii)$\Rightarrow$(iii). The referee pointed out that the proof of Theorem~\ref{t:gentle-one-cycle-clock} in \cite{AssemSkowronski87} and the proof of Theorem~\ref{t:discrete-derived-cat}(ii)$\Rightarrow$(iii) in \cite{BobinskiGeissSkowronski04} are field-independent and hence the assumption on $k$ is not necessary. We thank the referee for pointing this out to us.}\label{thm:graded-gentle-one-cycle-is-affine-A}
Let $A$ be a graded gentle one-cycle algebra.
\begin{itemize}
\item[(a)] If $A$ has finite global dimension, then it is derived equivalent to $\Gamma(p,q,r)$ for some $(p,q,r)\in\Pi$.
\item[(b)] If $A$ has infinite global dimension, then it is derived equivalent to $\Gamma'(0,q,r)$ for some $q\in\mathbb{N}$ and $r\in\mathbb{Z}$.
\end{itemize}
\end{theorem}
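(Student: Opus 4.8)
The plan is to reduce to the ungraded classification of gentle one-cycle algebras and then lift the resulting tilting--cotilting equivalences to the graded setting. Write $FA$ for the underlying ungraded algebra, a gentle one-cycle algebra, and note that $\gldim A=\gldim FA$. A gentle one-cycle algebra is tame and is never derived equivalent to a Dynkin quiver, so both Theorem~\ref{t:gentle-one-cycle-clock} and Theorem~\ref{t:discrete-derived-cat} apply. If $FA$ satisfies the clock condition, then by Theorem~\ref{t:gentle-one-cycle-clock} it is tilting--cotilting equivalent to the path algebra of a quiver of type $\widetilde{A}_{p,q}$ with $p,q>0$, that is, to $F\Gamma(p,q,0)$. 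If $FA$ does not satisfy the clock condition, then by the implication (ii)$\Rightarrow$(iv) of Theorem~\ref{t:discrete-derived-cat} it is tilting--cotilting equivalent to $\Lambda(r,n,m)$ for some $(r,n,m)\in\Omega$.

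The heart of the proof is that each step of the tilting--cotilting chain lifts to the graded level. Write the chain as $A_0=FA,A_1,\dots,A_s$ with $A_{i+1}=\End_{A_i}(T_{i+1})$; along the Assem--Skowro\'nski and Bobi\'nski--Geiss--Skowro\'nski reductions every $A_i$ is again gentle. Over a gentle algebra every (co)tilting module is a direct sum of \emph{string} modules, since band modules have self-extensions and so cannot be summands of a rigid module; and a string module over a graded string algebra is gradable, its degrees being propagated along the string from the homogeneous arrow actions. Thus, starting from the grading of $A$ on $A_0$, we may grade $T_{i+1}$, which endows $A_{i+1}$ with a grading and makes $T_{i+1}$ a gradable tilting complex realizing the derived equivalence $\cd(A_i)\simeq\cd(A_{i+1})$. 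By the second part of Corollary~\ref{c:grade-derived-equiv-induce-derived-equiv}, each step then induces a derived equivalence of the corresponding graded algebras (viewed as dg algebras), and composing the chain shows that $A$ is derived equivalent to a graded path algebra of $\widetilde{A}_{p,q}$ in the clock case, and to a graded $\Lambda(r,n,m)$ in the non-clock case.

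It remains to recognise these targets. A graded path algebra of $\widetilde{A}_{p,q}$ is graded equivalent to $\Gamma(p,q,r)$ for a suitable $r\in\mathbb{Z}$ determined by the degrees of the arrows around the cycle: a gauge change in the gradings of the vertices concentrates all the degree on the single arrow $\alpha_{p+q}$. A graded $\Lambda(r,n,m)$ is graded equivalent to $\Lambda(r,n,m,d)$ for some $d\in\mathbb{Z}$ by the remark preceding the statement. Graded equivalent algebras are graded derived equivalent, hence derived equivalent by Corollary~\ref{c:grade-derived-equiv-induce-derived-equiv}, and Proposition~\ref{p:A-L-derived-graded-here} gives a derived equivalence of $\Lambda(r,n,m,d)$ with $\Gamma(n-r,m+r,r-d)$ when $n>r$ and with $\Gamma'(0,n+m,m+d)$ when $n=r$. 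Composing, $A$ is derived equivalent to some $\Gamma(p,q,r)$ with $(p,q,r)\in\Pi$ in the clock and $n>r$ cases, and to $\Gamma'(0,n+m,m+d)$ in the case $n=r$, where $q=n+m\in\mathbb{N}$ and $m+d\in\mathbb{Z}$.

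Finally, the case division matches the global dimension of $A$, because $\gldim A=\gldim FA$ and tilting--cotilting equivalence preserves finiteness of global dimension (a classical tilting or cotilting module changes it by at most one). In the clock case $\widetilde{A}_{p,q}$ is hereditary, and when $n>r$ the algebra $\Lambda(r,n,m)$ has finite global dimension, so $A$ has finite global dimension and we land in $\Gamma(p,q,r)$, giving (a); when $n=r$ the algebra $\Lambda(n,n,m)$ has infinite global dimension, so $A$ has infinite global dimension and we land in $\Gamma'(0,q,r)$, giving (b). The main obstacle is the lifting step of the second paragraph: one must verify that throughout the Assem--Skowro\'nski and Bobi\'nski--Geiss--Skowro\'nski reductions the intermediate algebras remain gentle, so that their (co)tilting modules are string modules and the grading of $A$ can be transported along the entire chain.
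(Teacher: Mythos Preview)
Your proof is correct and follows essentially the same route as the paper's: reduce to the ungraded classification via Theorems~\ref{t:gentle-one-cycle-clock} and~\ref{t:discrete-derived-cat}, observe that the tilting modules in the chain are string modules and hence gradable, lift each step to a (graded) derived equivalence via Corollary~\ref{c:grade-derived-equiv-induce-derived-equiv}, and finish with Proposition~\ref{p:A-L-derived-graded-here}. The only minor difference is that the paper routes each step through Theorem~\ref{t:graded-rickard's-theorem} to obtain a \emph{graded} derived equivalence before invoking Corollary~\ref{c:grade-derived-equiv-induce-derived-equiv}, whereas you appeal to the second sentence of that corollary directly; your additional remarks on why intermediate algebras stay gentle and on matching the global dimension are correct elaborations of points the paper leaves implicit.
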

\begin{proof}
A tilting module $T$ over $FA$ is a direct sum of string modules, which are always gradable. Thus on $\End_{FA}(T)$ there is a natural graded algebra structure $B$, making $T$ a graded tilting $B$-$A$-module. So by Theorem~\ref{t:graded-rickard's-theorem}, $A$ and $B$ are graded derived equivalent.
As a consequence, it follows from Theorems~\ref{t:gentle-one-cycle-clock} and~\ref{t:discrete-derived-cat} that $A$ is graded derived equivalent to some graded algebra $B$, where $B$ is
\begin{itemize}
\item[$\cdot$] $\Gamma(p,q,r)$ for some $(p,q,r)\in\Pi$, if $FA$ satisfies the clock condition;
\item[$\cdot$] $\Lambda(r,n,m,d)$ for some $(r,n,m)\in\Omega$ with $n>r$ and some $d\in\mathbb{Z}$, if $FA$ does not satisfy the clock condition and has finite global dimension;
\item[$\cdot$] $\Lambda(n,n,m,d)$ for some $(n,n,m)\in\Omega$ and some $d\in\mathbb{Z}$, if $FA$ does not satisfy the clock condition and has infinite global dimension.
\end{itemize}
Thanks to Corollary~\ref{c:grade-derived-equiv-induce-derived-equiv}, $A$ is derived equivalent to $B$. Now applying Proposition~\ref{p:A-L-derived-graded-here}, we finish the proof.
\end{proof}

Let $A=kQ/I$ be a graded gentle one-cycle algebra. We define $d_+$
(respectively, $d_-$) as the difference between the number of
clockwise (respectively, counterclockwise) oriented relations and the
sum of the degrees of the clockwise (respectively, counterclockwise)
oriented arrows. We say that $A$ \emph{satisfies the graded clock
condition} if $d_+=d_-$.

\begin{conjecture} A graded gentle one-cycle algebra satisfies the graded clock condition  if and only if it is derived
equivalent to $\Gamma(p,q,0)$ for some $p,q>0$ or to $\Gamma'(0,q,q)$ (which is Koszul dual to $\Gamma(0,q,0)$) for some $q>0$.
\end{conjecture}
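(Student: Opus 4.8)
The plan is to identify the graded clock condition with the vanishing of a single numerical derived invariant and to evaluate that invariant on normal forms. For a graded gentle one-cycle algebra $A$ put $w(A)=|d_+-d_-|$. By Theorem~\ref{thm:graded-gentle-one-cycle-is-affine-A} every such $A$ is derived equivalent to a normal form $N(A)$, which is $\Gamma(p,q,r)$ with $(p,q,r)\in\Pi$ when $A$ has finite global dimension and $\Gamma'(0,q,r)=\Gamma'(q,q-r)$ when $A$ has infinite global dimension; by the remark following Theorem~\ref{t:main-thm-1.2} this $N(A)$ is determined up to the symmetry $(p,q,r)\sim(q,p,-r)$, so the condition $r=0$ is well defined on $N(A)$. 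Since $\Gamma(p,q,0)$ and $\Gamma'(0,q,q)=\Gamma'(q,0)$ are themselves graded gentle one-cycle algebras and $N(A)$ is unique, the conjecture is equivalent to the single statement that $A$ satisfies the graded clock condition if and only if $N(A)$ has $r=0$. I would split this into (i) a direct computation on normal forms and (ii) the invariance of the condition $d_+=d_-$ under derived equivalence inside the class.

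For step (i) the computation is immediate from the shapes of the quivers: in each normal form the whole cycle is coherently oriented, all relations of the algebra lie on the cycle, and the unique arrow of nonzero degree lies on the cycle, so one of $d_+,d_-$ vanishes. For $\Gamma(p,q,r)$ there are no relations and the graded arrow $\alpha_{p+q}$ has degree $r$, so $d_+=0$ and $d_-=-r$ (up to interchanging the two orientations), whence $w=|r|$; for $\Gamma'(q,s)$ there are $q$ relations and the graded arrow has degree $q-s$, so $d_+=0$ and $d_-=s$, whence $w=|s|$. Thus $N(A)$ satisfies the graded clock condition precisely when its parameter is $0$. I would also record the compatibility with Proposition~\ref{p:A-L-derived-graded-here}: the gentle algebra $\Lambda(r,n,m,d)$ has $w=|r-d|$, matching $w=|r-d|$ of its partner $\Gamma(n-r,m+r,r-d)$, and for $n=r$ the algebra $\Lambda(n,n,m,d)$ has $w=|n-d|$, matching $w=|n-d|$ of $\Gamma'(0,n+m,m+d)=\Gamma'(n+m,n-d)$. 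The sign of $d_+-d_-$ is reversed by passing to the opposite cycle orientation (as for $\Gamma'$, defined from $\Gamma(0,q,r)^{op}$), which is why $w$, rather than $d_+-d_-$ itself, is the natural invariant.

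Step (ii) is the heart of the matter and the main obstacle. The elementary route is to track $w$ through the reduction to $N(A)$ built in the proof of Theorem~\ref{thm:graded-gentle-one-cycle-is-affine-A}. That reduction is a composite of graded lifts of string-module tilting-cotilting equivalences $FA\to FB$ (Theorem~\ref{t:graded-rickard's-theorem} and Corollary~\ref{c:grade-derived-equiv-induce-derived-equiv}) followed by the explicit equivalences of Proposition~\ref{p:A-L-derived-graded-here}, for which $w$ is preserved by the computation above; so it suffices to analyse a single graded tilt. One would describe, for an APR-type tilt at a source or sink of a string module, how the cycle, its clockwise/counterclockwise relations, and the degrees of its arrows transform, and verify that the alternating count $d_+-d_-$ changes at most by an overall sign, forcing $w$ to be unchanged. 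The bookkeeping is elementary but involves many local configurations, and controlling the interaction of the tilt with the unique graded arrow on the cycle is the delicate point. The conceptual route, which I expect to be cleaner once the dictionary is in place, is to realise $w$ as a winding number: as recalled in the introduction, algebras of this type govern partially wrapped Fukaya categories of graded surfaces (cf.\ \cite{HaidenKatzarkovKontsevich14,LekiliPolishchuk17}), under which $A$ corresponds to a graded marked surface with a line field, the one cycle to a distinguished simple closed curve, and $w(A)$ to the absolute winding number of the line field along that curve. Winding numbers are homotopy invariants and derived equivalences are induced by graded diffeomorphisms, so $w$ would be manifestly a derived invariant; making this dictionary precise, which is not done in the present paper, is what I expect to be hard.

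As a consistency check and a reformulation of the obstacle, I would note the homological shadow of the statement. By Theorem~\ref{t:main-thm-1.2} the normal form has $r=0$ exactly when $\cd_{fd}$ contains a nonzero $\tau$-periodic object (a tube in the finite case, a rank-$q$ tube in the infinite case). Since a derived equivalence of dg algebras restricts to a triangle equivalence on $\cd_{fd}$ commuting with the suspension and the Auslander--Reiten translation (Lemma~\ref{lem:der-equiv-induces-equiv-on-per-and-dfd}), the existence of a $\tau$-periodic object in $\cd_{fd}(A)$ is a genuine derived invariant and, via $A\simeq N(A)$, coincides with $r=0$. Hence step (ii) is equivalent to showing that the purely combinatorial condition $d_+=d_-$ detects this $\tau$-periodicity directly from the graded quiver with relations, and this reformulation is perhaps the most promising form in which to attack the invariance.
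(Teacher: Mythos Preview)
The statement you are attempting to prove is stated in the paper as a \emph{Conjecture}, and the paper gives no proof of it; it is the final open problem of the article. So there is no paper proof to compare your proposal against.

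Your proposal is a sensible strategy and your reductions are correct: the normal-form computations in step~(i) are right, and the observation that the parameter $r$ (equivalently, the existence of a $\tau$-periodic object in $\cd_{fd}$) is a derived invariant is already implicit in Theorem~\ref{t:main-thm-1.2} and the remark following it. But, as you explicitly acknowledge, step~(ii) is not carried out: you neither perform the case analysis showing that $w=|d_+-d_-|$ is preserved under each graded APR-type tilt, nor establish the Fukaya-category dictionary that would identify $w$ with an intrinsic winding number. Your final paragraph only reformulates the missing step (showing that the combinatorial condition $d_+=d_-$ coincides with the homological condition ``$\cd_{fd}$ contains a $\tau$-periodic object'') rather than proving it. So what you have written is an outline with a clearly identified gap, not a proof---which is exactly the status the paper assigns to the statement.
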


\def\cprime{$'$}
\providecommand{\bysame}{\leavevmode\hbox to3em{\hrulefill}\thinspace}
\providecommand{\MR}{\relax\ifhmode\unskip\space\fi MR }
\providecommand{\MRhref}[2]{%
  \href{http://www.ams.org/mathscinet-getitem?mr=#1}{#2}
}
\providecommand{\href}[2]{#2}

\end{document}